\documentclass{amsart}
\usepackage{amsfonts}
\usepackage{amsmath}
\usepackage{esint}
\usepackage{amssymb}
\usepackage{amsthm}
\usepackage{indentfirst}
\usepackage{xcolor}
\usepackage{enumitem}

\newtheorem{lemma}{Lemma}
\newtheorem{theorem}{Theorem}
\newtheorem{proposition}{Proposition}

\newtheorem{corollary}{Corollary}

\theoremstyle{remark}

\newcommand{\dv}{{\rm div}}

\newcommand{\li}[1]{\|#1\|_{L^\infty}}

\newcommand{\tor}{\mathbb T^d}

\begin{document}

\title{Inverse energy cascade and non-uniqueness for the advection-diffusion equation}

\author{Alexey Cheskidov}
\address{Institute for Theoretical Sciences, Westlake University, 600 Dunyu Road, Hangzhou, Zhejiang 310024, People's Republic of China}
\email{cheskidov@westlake.edu.cn}

\author{Xiaoran Liu}
\address{Institute for Theoretical Sciences, Westlake University, 600 Dunyu Road, Hangzhou, Zhejiang 310024, People's Republic of China}
\email{liuxiaoran@westlake.edu.cn}

\begin{abstract}
We prove non-uniqueness for the advection-diffusion equation with a drift
in a logarithmically near-critical class below
$L_t^2L_x^\infty$. The first construction is a diffusion-assisted inverse
cascade of scalar energy that diverges as $t \to 0$. The second uses
compressed inverse mixing, with diffusion acting perturbatively, and produces a bounded parabolic solution with the energy jump at the initial time.
In both constructions the drift and scalar are smooth for positive time. We also consider the drift in $L^{2,\infty}_tL^\infty_x$ and solutions satisfying the energy inequality starting from almost every time. In this setting, we prove uniqueness of solutions with bounded energy, and construct non-unique solution with energy blowing up as time goes to zero.
\end{abstract}

\maketitle

\section{Introduction}

We study the advection--diffusion equation on the flat torus $\tor$, $d\ge2$,
\begin{equation}\label{adv}
\begin{cases}
\partial_t\rho+b\cdot\nabla\rho-\Delta\rho=0,\\
\dv b=0,\\
\rho|_{t=0}=0,
\end{cases}
\end{equation}
where $\rho$ is a passive scalar and $b$ is a prescribed incompressible drift
\[
b \in C^\infty((0,\infty) \times \mathbb{T}^d).
\]
A function $\rho$ is a distributional solution of \eqref{adv} with zero initial datum if $\rho,b\rho\in L^1_{\rm loc}$ and
\begin{equation}\label{weak-form-intro}
\int_0^\infty\!\int_{\tor}
\rho\bigl(\partial_t\varphi+b\cdot\nabla\varphi+\Delta\varphi\bigr)
\,dx\,dt=0
\end{equation}
for every $\varphi\in C_c^\infty([0,\infty)\times\tor)$.
Since $b$ is smooth for positive time, every distributional solution 
\[
\rho \in C^\infty((0,\infty) \times \mathbb{T}^d),
\]
by interior parabolic regularity. A distributional solution in $L^\infty(0,T;L^2)\cap L^2(0,T;H^1)$ is called a parabolic solution.  

We study how the behavior of the drift as $t \to 0^+$ affects uniqueness, i.e., whether there are nonzero solutions to \eqref{adv}.

Due to positive time smoothness, $\rho$ satisfies the energy equality for every $0<t_0<t$:
\begin{equation}\label{energy-intro}
\frac12\|\rho(t)\|_{L^2}^2 +\int_{t_0}^t\|\nabla\rho(\tau)\|_{L^2}^2\,d\tau
=\frac12\|\rho(t_0)\|_{L^2}^2.
\end{equation}
Hence $\rho \in L^\infty(0,T;L^2)$ if and only if $\rho \in L^2(0,T;H^1)$.
A solution $\rho \equiv 0$ is zero if an only if the energy equality is satisfied starting from $t_0=0$.

We study the following question:
\begin{center}
{\it In what classes the energy equality starting from $t_0=0$, and hence uniqueness holds?}
\end{center}
 
The answer depends on the behavior of the drift as $t\to 0^+$. We show that if $b \in L_t^{2,\infty}L_x^\infty$, then nonzero $\rho$ cannot have finite energy jumps at the initial time. 

Previously, Bonicatto, Ciampa, and Crippa \cite{BonicattoCiampaCrippa2024} proved uniqueness of distributional solutions in the class of bounded energy $\rho \in L^\infty_t L^2$ provided $b\in L_t^2L_x^\infty$. When $b$ is smooth for positive time, we show that the uniqueness of bounded energy solutions holds even when $b\in L_t^{2,\infty}L_x^\infty$.  In fact, smoothness of $b$ can be relaxed to the energy inequality starting from almost every time, see Theorem~\ref{endpoint-main}. We then present three different constructions showing the sharpness of this condition.

\vspace{1em}

We now state our main results. For $A>0$, define the function

{$$
G_A=s^2\exp\!\left[-A\bigl(\log(e+s)\bigr)^{3/4}\right]
$$}

Our first two results demonstrates non-uniqueness under a logarithmically near-critical integrability condition. The first one is diffusion-assisted, and the second one is with perturbative diffusion.

\begin{theorem}[Diffusion-assisted non-uniqueness via instantaneous blow-up] \label{main}
There exists a divergence-free drift
\[
b\in C^\infty((0,\infty)\times\tor), \qquad
\int_0^\infty G_A\!\left(\|b(t)\|_{L_x^\infty}\right)\,dt < \infty, \quad \text{for every }A>0,
\]
and a nonzero function
\[
\rho\in C^\infty((0,\infty)\times\tor)
\cap\bigcap_{1\le p<\infty}L^p((0,\infty);L^\infty(\tor)),
\]
with
\[
\lim_{t \to 0+} \|\rho(t)\|_{L^2} = \infty,
\]
which solves \eqref{adv} with zero initial datum in the sense of \eqref{weak-form-intro}.
Hence, the zero solution is not unique within this class of distributional solutions.
\end{theorem}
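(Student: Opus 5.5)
The construction is built backward in time as a sequence of stages accumulating at $t=0$. Fix times $t_n\downarrow 0$, say $t_n = 2^{-n}$, and frequencies $\lambda_n\uparrow\infty$ growing super-exponentially. Nothing happens on $[t_0,\infty)$ except heat decay of a fixed nonzero smooth profile $\rho(t_0)=\theta_0$, for example $\cos(x_1)$. On each interval $I_n=[t_{n+1},t_n]$ we run the equation forward from a state $\rho(t_{n+1})$ concentrated at frequency $\sim\lambda_{n+1}$ to a state $\rho(t_n)$ concentrated at frequency $\sim\lambda_n$, with $\lambda_n<\lambda_{n+1}$. This is the inverse cascade: scalar variance moves to large scales as time increases, while diffusion removes variance at each stage.

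Since the energy equality \eqref{energy-intro} holds on every $I_n$, $\|\rho(t_{n+1})\|_{L^2}^2=\|\rho(t_n)\|_{L^2}^2+2\int_{I_n}\|\nabla\rho\|_{L^2}^2$. We design the stages so that the dissipated amount is comparable to $\|\rho(t_n)\|_{L^2}^2$ times a factor $\ge 1+c$. Then $\|\rho(t)\|_{L^2}\to\infty$ as $t\to0$, while $\|\rho(t)\|_{L^\infty}$ stays bounded by the maximum principle applied backward stage by stage. Concretely, each stage uses profiles with sup-norm at most a fixed constant but $L^2$ mass on a larger set: "filling" the torus. More simply, $L^2$ could be replaced by a log-divergent quantity. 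The requirement $\rho\in L^pL^\infty$ for all $p<\infty$ suggests that $\|\rho(t)\|_{L^\infty}$ grows only like $\log(1/t)$, and we aim for $\|\rho(t)\|_{L^\infty}\lesssim \log(1/t)$.

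Each stage is realized as an "unmixing" flow. Take the mixing construction, for example alternating shear flows or a stretched lattice of cellular flows, which carries a large-scale profile to a fine-scale one in time $\tau_n$ with velocity amplitude $\sim\lambda_n/\tau_n$. Reverse it in time, so that the drift $b=-\tilde b(t_n+t_{n+1}-t)$ unmixes. The diffusion is then handled as an exact correction: we define $\rho$ on $I_n$ by solving the equation forward from a chosen fine-scale datum. Because we cannot prescribe the endpoint exactly, we prescribe the datum at $t_{n+1}$ as $\theta_{n+1}$ plus a corrector solving the adjoint (backward) problem. The clean way to do this is to first solve the full problem forward on the stages with diffusion and then glue stages.

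Gluing works by scaling. Stage $n$ is a parabolic rescaling $x\mapsto\lambda_n x$, $t\mapsto\lambda_n^2 t$ of a single fixed smooth "building block" $(\bar b,\bar\rho)$ on $[0,1]$. The block maps a profile $\Theta(\lambda x)$ to $\kappa\,\Theta(x)$ with $\kappa<1$, where diffusion assists the coarsening and is not treated perturbatively. The scaling is implemented by periodization at frequency $\lambda_n$. With durations $|I_n|\sim\lambda_n^{-2}\cdot(\text{log factor})$, the drift has $\|b\|_{L^\infty}\sim\lambda_n$ on $I_n$.

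The integrability condition then reads $\sum_n|I_n|\,G_A(\lambda_n)\approx\sum_n \lambda_n^{-2}\ell_n\,\lambda_n^2 e^{-A(\log\lambda_n)^{3/4}}$, where $\ell_n$ is the number of sub-stages on $I_n$. We must choose $\lambda_n$ and $\ell_n$ so that this sum converges for every $A>0$ while $\sum|I_n|<\infty$. We also need the energy to diverge even though each fixed-ratio block loses a factor $\kappa$. This forces $\lambda_{n+1}/\lambda_n$ to be large and the amplification per stage to be $\sim e^{(\log\lambda_n)^{3/4}}$-sized at most. The exponent $3/4$ is exactly what balances the dissipation loss of the block, which is $\exp(-c\sqrt{\log})$-type from diffusive smoothing over $\log$ many mixing steps, against the required summability.

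The main obstacle is designing the building block: a smooth divergence-free drift on a unit time interval which, together with diffusion, transports a scalar from a concentrated fine-scale state to a coarse state with controlled loss. Here "controlled loss" means the energy at the fine end exceeds the coarse end by a quantified factor, with the sup norm not growing. I would first try the standard alternating-shear mixer run backward, and take the diffusive error from the Fourier computation at each shear step. The delicate bookkeeping of the stretched-exponential logarithmic factors, and checking that every stage's error stays $\ll 1$ after parabolic rescaling, is where the $(\log)^{3/4}$ exponent emerges. Finally, the weak formulation \eqref{weak-form-intro} with zero datum near $t=0$ follows because $\rho\in L^1_tL^\infty_x$. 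The divergence of the fine-scale profiles in the energy norm does not matter, since they converge weakly to $0$ as $\lambda_n\to\infty$. Testing against $\varphi$ and letting $t_0\to0$ gives $\int\rho(t_0)\varphi(t_0)\to0$ by this weak convergence.
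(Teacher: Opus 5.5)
There is a genuine gap. The building block, which you correctly single out as the main obstacle, is never constructed, and the framework around it contains claims that cannot hold. On the torus $\|\rho(t)\|_{L^2}\le|\tor|^{1/2}\|\rho(t)\|_{L^\infty}$, so $\|\rho(t)\|_{L^2}\to\infty$ forces $\|\rho(t)\|_{L^\infty}\to\infty$. Profiles of bounded sup norm cannot carry unbounded energy. The maximum principle only bounds later times by earlier ones, so ``applied backward'' it gives no control as $t\to0^+$. What has to be proved is a growth rate slow enough for $\rho\in L^p_tL^\infty_x$ for every $p<\infty$. In the paper this is $\|\varrho(t)\|_{L^\infty}\lesssim t^{-\alpha_k/2}$ on $[\lambda_{k+1}^{-2},\lambda_k^{-2}]$ with $\alpha_k\to0$ (Proposition~\ref{principal-bounds}). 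Parabolic rescalings of one fixed block cannot give this, however they are grouped into stages. A fixed block has a fixed frequency ratio $\lambda$ and a fixed backward gain $\kappa^{-1}$. After $j$ blocks, $t\sim\lambda^{-2j}$ and $\|\rho(t)\|_{L^\infty}\gtrsim\kappa^{-j}=t^{-\gamma}$ with $\gamma=\log(1/\kappa)/(2\log\lambda)>0$ fixed. Hence $\rho\notin L^p_tL^\infty_x$ for $p\ge1/\gamma$, and certainly $\|\rho(t)\|_{L^\infty}\lesssim\log(1/t)$ fails. As you note, the endpoint of a block cannot be prescribed exactly, so a fixed block carries relative errors of fixed size that do not decay from stage to stage. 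Your proposed exact gluing through an ``adjoint backward'' corrector is a backward parabolic problem, which is ill-posed.

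The proposed engine, a reversed mixer, also operates in the wrong regime. Suppose, as you write, $\|b\|_{L^\infty}\sim\lambda_n$ on $I_n$ while $\rho(t_{n+1})$ sits at frequency $\lambda_{n+1}\gg\lambda_n$. The heat flow damps that datum on the time scale $\lambda_{n+1}^{-2}$, during which the drift moves it by $\lambda_n\lambda_{n+1}^{-2}\ll\lambda_{n+1}^{-1}$, so nothing is coherently unmixed. Reversed mixing works only when diffusion is made perturbative by time compression, as in the paper's Theorem~\ref{main2} ($\sqrt{\delta_k}=\varepsilon_k\ell_k\ll\ell_k$). But then transport conserves $L^2$ across each stage and one gets only a finite energy jump $E_*$, not blow-up. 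Your plan needs diffusion to be at once non-perturbative, to force $\|\rho(t_{n+1})\|_{L^2}^2\ge(1+c)\|\rho(t_n)\|_{L^2}^2$, and negligible, for the unmixing to reach its target.

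The paper escapes this with a bilinear Fourier interaction rather than transport. On $J_{k+1}$ the shear $b_{k+1}\propto\eta(\lambda_{k+1}^2t)\cos(\lambda_{k+1}x_{k+1})e_k$ meets $\bar\rho_{k+1}\propto e^{-\lambda_{k+1}^2t}\cos(\lambda_{k+1}x_{k+1})\phi_k$. The zero mode of $\cos^2$ in $\dv(b_{k+1}\bar\rho_{k+1})$ creates $\rho_k\approx\bar\rho_k$ at frequency $\lambda_k$ with amplitude $\lambda_k^{\alpha_k}$. Diffusion simply erases the $\lambda_{k+1}$-mode, i.e. almost all of the energy. The transfer relation $\lambda_k^{\alpha_k}\approx\lambda_{k+1}^{-2}\|b_{k+1}\|_{L^\infty}\lambda_k\lambda_{k+1}^{\alpha_{k+1}}$ explains why $\|b_k\|_{L^\infty}\sim\lambda_{k-1}^{\alpha_{k-1}-1}\lambda_k^{2-\alpha_k}\gg\lambda_k$. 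A drift of size $\lambda_k$ would only sustain $t^{-1/2}$ growth, the regime of Theorem~\ref{endpoint-main}(i). The $G_A$ condition then comes from $\lambda_k^{-2}\|b_k\|_{L^\infty}^2\le e^{Ck^2}$ against $(\log\lambda_k)^{3/4}\sim k^{9/4}$, not from a dissipation balance. The interaction errors are absorbed by a corrector $\rho_c$, a weak-$*$ limit of forward solutions started from zero at $\lambda_n^{-2}$. It is bounded by $Ct^{-\theta_k^+/2}$ with $\theta_k^+<\alpha_k$ through Lemma~\ref{boundary-iteration}, using Fourier-support tracking in $\Omega_k+\lambda_{k+1}\mathbb Z^d$ and heat decay of the high sector. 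That comparison is what proves $\varrho-\rho_c\neq0$. Your proposal has no counterpart to the transfer mechanism, the quantitative per-stage error bounds, or the nontriviality argument.
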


\begin{theorem}[Inverse-mixing non-uniqueness with perturbative diffusion]\label{main2}
There exists a divergence-free drift
\[
b\in C^\infty((0,\infty)\times\tor),
\]
such that

{
$$
\int_0^\infty G_A\!\left(\|b(t)\|_{L_x^\infty}\right)\,dt < \infty, \qquad \text{for every} \quad A>0,
$$}

and a nonzero function
\[
\rho\in C^\infty((0,\infty)\times\tor)
\cap L^\infty((0,\infty)\times\tor)  \cap L^2((0,\infty);H^1(\tor)),
\]
with
\[
\lim_{t \to 0+} \|\rho(t)\|_{L^2} = 1,
\]
which solves \eqref{adv} with zero initial datum in the sense of \eqref{weak-form-intro}.
Hence, the zero solution is not unique within this class of distributional solutions.
\end{theorem}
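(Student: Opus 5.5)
Our plan is to build the solution backward in time from $t=1$: we run a sequence of compressed ``inverse mixing'' stages on dyadic time intervals $I_n=[t_{n+1},t_n]$ with $t_n\downarrow0$, and glue them so that at $t=0$ the scalar has become infinitely oscillatory and hence weakly zero, while its $L^2$ norm stays close to $1$. Concretely, we fix a smooth, mean-zero, bounded profile $\rho_0$ with $\|\rho_0\|_{L^2}=1$ at time $t=1$. On each $I_n$ we prescribe a smooth divergence-free shear or rotating-cell flow $b_n$ whose Lagrangian map $X_n$ sends a field oscillating at frequency $\lambda_{n+1}$ to one oscillating at $\lambda_n$; this is the standard mixing flow run in reverse. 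Iterating, we define the transport part $\theta$ by $\theta(t_n)=\rho_0\circ\Phi_n$ with $\|\nabla\Phi_n\|\lesssim\lambda_n$. The rates are chosen superexponentially, e.g.\ $\lambda_n=\exp(n^{\gamma})$ with a suitable $\gamma$. We take $|I_n|=\tau_n$ and $\|b_n\|_{L^\infty}\sim \lambda_n^{-1}\lambda_{n+1}/\tau_n\cdot(\text{cell size})$, with cell size $\sim\lambda_n^{-1}$, so that $\|b_n\|_\infty\sim \tau_n^{-1}\lambda_n^{-1}$ up to logarithmic factors. Since $\theta$ is a rearrangement of $\rho_0$, its $L^\infty$ norm is bounded and its $L^2$ norm is exactly $1$. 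Moreover $\theta(t)\rightharpoonup0$ as $t\to0$, because the oscillations refine. This weak limit is what makes the weak formulation \eqref{weak-form-intro} hold with zero initial datum.

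Next we make diffusion perturbative. We define $\rho$ as the exact solution of \eqref{adv} on $(0,1]$ obtained by solving forward from a time $t_N$ with datum $\theta(t_N)$ and sending $N\to\infty$ by compactness. Equivalently, we estimate $\rho-\theta$ via the energy equality \eqref{energy-intro}. The dissipated energy on $I_n$ is at most $\int_{I_n}\|\nabla\theta\|_{L^2}^2\lesssim\tau_n\lambda_n^2$. Requiring $\sum_n\tau_n\lambda_n^2\le\varepsilon$ keeps $\|\rho(t)\|_{L^2}^2\in[1-2\varepsilon,1]$ and increasing to a limit as $t\to0$. We then normalize so that the limit equals $1$: we scale $\rho_0$, or note that the energy is monotone and bounded, so the limit exists and we rescale. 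We also need $\rho(t)\rightharpoonup0$. We would get this by showing $\|\rho(t_n)-\theta(t_n)\|_{L^2}\to0$ along the construction, using a stability estimate in which the diffusion error on each stage is controlled by $\sqrt{\tau_n}\lambda_n$. Boundedness follows from the maximum principle, and $L^2_tH^1$ follows from the finite energy jump. Weak continuity at $0$ together with $\rho,b\rho\in L^1$ gives \eqref{weak-form-intro} via a standard cutoff-in-time argument: the boundary term $\int\rho(\delta)\varphi(\delta)\,dx\to0$ by the weak convergence.

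The main obstacle is the integrability bookkeeping. We must have $\sum_n\tau_n\, G_A(\tau_n^{-1}\lambda_n^{-1}\log\cdots)<\infty$ for every $A$, while $\tau_n\lambda_n^2\to0$ summably and $\sum\tau_n<\infty$. With $\|b_n\|\sim M_n$, this becomes $\sum\tau_nM_n^2\exp(-A(\log M_n)^{3/4})$. Because the critical $L^2_tL^\infty$ quantity $\tau_nM_n^2\sim\tau_n^{-1}\lambda_n^{-2}$ is comparable to $(\tau_n\lambda_n^2)^{-1}$, we are forced into a borderline regime, and only the subpolynomial gain $\exp(-A(\log M_n)^{3/4})$ can save summability. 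The first choice to try is $\tau_n\lambda_n^2=\exp(-c\,n^{\beta})$, with $\log M_n\sim n^{\gamma}$ and $\gamma\cdot\tfrac34>\beta$. Then the $G_A$-sum behaves like $\sum\exp(c n^\beta-A n^{3\gamma/4})<\infty$ for every $A>0$, while the diffusion errors $\exp(-cn^\beta/2)$ remain summable. The delicate point is that each mixing stage actually needs a number of rearrangement steps that is logarithmic in the frequency ratio, which inflates $M_n$. We would therefore keep the ratio $\lambda_{n+1}/\lambda_n$ fixed at $2$ and let $\tau_n$ alone carry the compression, verifying that the exponents $\beta<\tfrac34\gamma$ can be chosen consistently.
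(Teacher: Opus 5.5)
Your proposal is correct and follows essentially the paper's route: time-compressed Alberti--Crippa--Mazzucato inverse-mixing blocks with a fixed scale ratio, diffusion treated perturbatively through the energy estimate for $\rho-\theta$ with per-stage $L^2$ error $\sqrt{\tau_n}\lambda_n$, approximate solutions started at $t_N$ and passed to a compactness limit, nontriviality from closeness to the transported profile, and the $G_A$ bookkeeping in which $\tau_n M_n^2\sim(\tau_n\lambda_n^2)^{-1}$ can only be rescued by the factor $\exp(-A(\log M_n)^{3/4})$. The differences are minor. First, the paper's compression is polynomial, $\tau_k\lambda_k^2\sim\varepsilon_0^2(k+1)^{-4}$, which leaves more room than your $e^{-cn^\beta}$ with $\beta<3/4$. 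Second, the paper obtains the zero initial datum by passing to the limit in the approximate weak formulations using $\|\vartheta_n\|_{\dot H^{-1}}\lesssim\ell_n$, rather than through weak continuity at $t=0$. Finally, the error accumulation actually requires $\sum_n\sqrt{\tau_n}\lambda_n$ to be small, not $\sum_n\tau_n\lambda_n^2$, but your parameter choice gives both.
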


We remark that the drifts in both Theorems \ref{main} and \ref{main2} satisfies 
\[
b\in\bigcap_{1\le q<2}L^q((0,\infty);L^\infty(\tor)).
\]
At large amplitudes, {$s^q=o(G_A(s))$}for every $q<2$, while {$G_A(s)=o(s^2)$}. Thus, our condition on the drift provides a single logarithmic refinement of $L^{2-}_t$. 

Now we consider a more restrictive weak-$L^2$ case for the drift. Surprisingly, even in this case positive-time energy balance (or even smoothness) does not imply uniqueness. While bounded energy solutions are unique, there are non-unique smooth for positive time solutions that blow up as $t \to 0^+$. More precisely, we have the following dichotomy.

\begin{theorem}[Weak-$L^2$ dichotomy]
\label{endpoint-main}
The following statements hold.
\begin{enumerate}
\item[\rm(i)] \emph{Non-uniqueness with positive-time energy equality.}
There exist a divergence-free drift
\[
b\in C^\infty((0,\infty)\times\tor)
\cap L_t^{2,\infty}L_x^\infty
\cap L_{t,x}^{2,\infty}
\]
and a nonzero function
\[
\rho\in C^\infty((0,\infty)\times\tor)
\cap\bigcap_{0<r\le\infty}L_t^{2,r}L_x^\infty
\]
such that $b\rho\in L_t^1L_x^\infty$ and $\rho$ solves \eqref{adv}
with zero initial datum in the sense of \eqref{weak-form-intro}.
Moreover, for every $0<t_0<t<\infty$,
\begin{equation}\label{endpoint-energy-constructed}
\frac12\|\rho(t)\|_{L^2}^2 + \int_{t_0}^t\|\nabla\rho(\tau)\|_{L^2}^2\,d\tau =
\frac12\|\rho(t_0)\|_{L^2}^2.
\end{equation}

\item[\rm(ii)] \emph{Global energy rigidity.}
For every $T>0$ and every divergence-free drift
\[
b\in L^{2,\infty}(0,T;L^\infty(\tor)),
\]
the zero solution is the unique distributional solution
\[
w\in L^\infty(0,T;L^2(\tor)) \cap L^2_{\rm loc}((0,T);H^1(\tor))
\]
of \eqref{adv} with zero initial datum satisfying the energy inequality
\begin{equation}\label{endpoint-energy-intro}
\frac12\|w(t)\|_{L^2}^2
+\int_{t_0}^t\|\nabla w(\tau)\|_{L^2}^2\,d\tau \le
\frac12\|w(t_0)\|_{L^2}^2,
\end{equation}
for every $t\in(t_0,T)$ and almost every $t_0\in(0,T)$.
\end{enumerate}
\end{theorem}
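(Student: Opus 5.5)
The two parts call for different arguments: (ii) is an a priori estimate, (i) is a construction. I would prove (ii) first, since it shows what the construction in (i) has to avoid.

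\emph{Part (ii).} Let $E(t)=\frac12\|w(t)\|_{L^2}^2$ and $M=\sup_{(0,T)}E$.

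\emph{Step 1: dissipation near $t=0$.} In \eqref{endpoint-energy-intro} let $t_0\to0$ along admissible times and use monotone convergence. This gives $\int_0^T\|\nabla w\|_{L^2}^2\,d\tau\le M$, and hence $\int_0^\delta\|\nabla w\|_{L^2}^2\to0$ as $\delta\to0$. Since $t\le\delta$ on $(0,\delta)$, we also get $\liminf_{t\to0}t\|\nabla w(t)\|_{L^2}^2=0$; otherwise $\|\nabla w\|_{L^2}^2\gtrsim 1/t$ near $0$, which is not integrable. The lower limit can be taken along admissible times $t_0$, because those have full measure.

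\emph{Step 2: low modes.} Let $P_{\le N}$ be the Fourier projection onto $|k|\le N$. Test \eqref{weak-form-intro} with $\varphi=\chi(t)P_{\le N}^2w$, which is legitimate: it is smooth in $x$, and $\partial_t w\in L^1_tH^{-m}$ by the equation. With zero initial datum this gives, for a.e. $t$,
\[
\tfrac12\|P_{\le N}w(t)\|_{L^2}^2\le\int_0^t\|b\|_{L^\infty}\|w\|_{L^2}\,N\|P_{\le N}w\|_{L^2}\,d\tau .
\]
A Gronwall-type step then yields $\|P_{\le N}w(t)\|_{L^2}\le C N M^{1/2}\int_0^t\|b\|_{L^\infty}$. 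By the weak-$L^2$ Hölder inequality, $\int_0^t\|b\|_{L^\infty}\lesssim t^{1/2}\|b\|_{L^{2,\infty}_tL^\infty_x}$.

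\emph{Step 3: high modes and conclusion.} For the high modes, $\|P_{>N}w\|_{L^2}^2\le N^{-2}\|\nabla w\|_{L^2}^2$. Combining with Step 2 gives
\[
E(t)\lesssim N^2 tM\|b\|^2_{L^{2,\infty}L^\infty}+N^{-2}\|\nabla w(t)\|_{L^2}^2 .
\]
Choose $N^2=\varepsilon/t$. Then $E(t)\lesssim\varepsilon M\|b\|^2+\varepsilon^{-1}t\|\nabla w(t)\|^2$. Along the admissible sequence from Step 1 this gives $\liminf E(t_0)\lesssim\varepsilon$. Feeding those $t_0$ into \eqref{endpoint-energy-intro} gives $E(t)\lesssim\varepsilon$ for every $t$. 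Since $\varepsilon$ is arbitrary, $w\equiv0$.

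The weak-$L^2$ hypothesis enters exactly in the bound $\int_0^t\|b\|_{L^\infty}\lesssim t^{1/2}$, which balances the diffusive scale $N^2\sim1/t$.

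\emph{Part (i).} I would rerun the diffusion-assisted inverse cascade used for Theorem~\ref{main}, but with the scalings fixed by the endpoint.

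On dyadic intervals $[t_{k+1},t_k]$ with $t_k=2^{-k}$, the drift has amplitude $\|b(t)\|_{L^\infty}\sim t^{-1/2}$. It transports scalar energy from frequency $\lambda_{k+1}$ to a lower frequency $\lambda_k$ as time increases, with $\lambda_k^2t_k\gtrsim1$, so that diffusion at the current frequency acts on the time scale of the interval. The amplitudes $\|b\|_{L^\infty}\sim t^{-1/2}$ give $b\in L^{2,\infty}_tL^\infty_x$, and $L^{2,\infty}_{t,x}$ follows from spatial concentration of the building blocks. The scalar is normalized so that $\|\rho(t)\|_{L^\infty}\sim t^{-1/2}\ell(t)$, where $\ell$ decays faster than any power of $\log(1/t)$ (e.g. $\ell=\exp(-\sqrt{\log(1/t)})$). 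This choice gives $\rho\in L^{2,r}_tL^\infty_x$ for every $r>0$ and $b\rho\in L^1_tL^\infty_x$, since $\int_0 \ell(t)\,dt/t<\infty$.

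Positive-time smoothness makes \eqref{endpoint-energy-constructed} automatic. For zero initial datum, I would show $\rho(t)\to0$ in $\mathcal D'$ as $t\to0$: the scalar lives at frequencies $\lambda_k\to\infty$, and the weak form \eqref{weak-form-intro} then passes to the limit using $\rho,b\rho\in L^1$.

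The main obstacle is the per-interval cascade step. At drift size only $t^{-1/2}$, the flow must move energy by a fixed frequency ratio within one interval while diffusion removes only a controlled fraction of it. This is why the energy is forced to blow up as $t\to0$, consistent with Part (ii).
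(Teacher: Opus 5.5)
Your Part~(ii) is correct and is essentially the paper's argument. The low modes are bounded by $N\int_0^t\|b\|_{L^\infty}\lesssim NK\sqrt t$; you obtain this from a low-mode energy estimate, the paper from Duhamel. The high modes are bounded by $N^{-1}\|\nabla w(t)\|_{L^2}$ with $N\sim t^{-1/2}$. Integrability of the dissipation near $t=0$, which the energy inequality forces, then rules out $\|\nabla w(t)\|_{L^2}^2\gtrsim 1/t$. Arguing through the essential $\liminf$ of $t\|\nabla w(t)\|_{L^2}^2$, rather than by contradiction with a lower bound $\|w(t)\|_{L^2}\ge m$, is a cosmetic difference.

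Part~(i), however, is not a proof. It lists target scalings and explicitly leaves open the per-interval cascade step, which is the whole content of the construction. Three things are missing.

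(a) An explicit cascade step with quantified errors. In the paper these are the trigonometric blocks $\phi_k,\bar\rho_k,b_k$; $\rho_k$ is produced by Duhamel from $\dv(b_{k+1}\bar\rho_{k+1})$, and Proposition~\ref{key} shows $\rho_k\approx\bar\rho_k$.

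(b) A way to get an \emph{exact} solution. The cascade $\varrho=\sum_k\rho_k$ solves the equation only up to a forcing $f$. The paper subtracts a corrector $\rho_c$, the weak-$*$ limit of solutions $\rho^n$ of the forced problem started from zero at $\lambda_n^{-2}$. These are bounded uniformly in $n$ by $Ct^{-\theta_k^+/2}$ on $I_k$ via Lemma~\ref{boundary-iteration}, which uses Fourier support in $\Omega_{k-1}+\lambda_k\mathbb Z^d$, a clean low/high sector split, and heat decay of the high sector.

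(c) Nontriviality. A lower bound has to survive that limit. In the paper it is $\|\varrho(\lambda_k^{-2})\|_{L^\infty}\gtrsim\lambda_k^{\alpha_k}$ against $\|\rho_c(\lambda_k^{-2})\|_{L^\infty}\lesssim\lambda_k^{\theta_k^+}$, a ratio $e^{\kappa(k+2)/2}\to\infty$.

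Saying that diffusion removes only a controlled fraction addresses none of these. The zero initial datum is also not obtained from a frequency heuristic. The paper gets it from the truncated weak formulations and Lorentz--H\"older bounds such as $\|b\sum_{j>N}\rho_j\|_{L^1_tL^\infty_x}\lesssim\|b\|_{L^{2,\infty}_tL^\infty_x}\|\sum_{j>N}\rho_j\|_{L^{2,1}_tL^\infty_x}$. A minor point: $L^{2,\infty}_{t,x}$ follows trivially from $L^{2,\infty}_tL^\infty_x$ on the bounded torus, with no spatial concentration needed.

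The regime you propose also cannot close such a step. Dyadic times with a fixed frequency ratio, $\lambda_k^2t_k\sim1$ and $\|b\|_{L^\infty}\sim t^{-1/2}$, is an $O(1)$-P\'eclet regime. There the leakage and commutator errors, of relative size $\lambda_{k-1}/\lambda_k$, stay of order one. In the paper's weak-$L^2$ parameters, condition \eqref{st4} contains the requirement $Ce^{\kappa(k+2)-\log(\lambda_k/\lambda_{k-1})}\le1$, which forces $\lambda_k/\lambda_{k-1}$ to grow exponentially in $k$.

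The paper therefore makes these choices:
\begin{itemize}
\item super-lacunary frequencies $\lambda_{k+1}=\lambda_k[\lambda_k^{2/(k+2)}]$, so $\log\lambda_k\sim C_0k^2$ with $C_0/\kappa$ large;
\item $b_k$ switched on only during $J_k=[\lambda_k^{-2},2\lambda_k^{-2}]$, with amplitude about $e^\kappa\lambda_k$, so that $|J_k|\,\|b_k\|_{L^\infty}^2\lesssim e^{2\kappa}$;
\item $\alpha_k=1-\kappa(k+2)/\log\lambda_k$, so that $\|\rho_k\|_{L^\infty}\sim\lambda_ke^{-\kappa(k+2)}$.
\end{itemize}
Each step then gains a fixed factor $e^\kappa$ relative to $t^{-1/2}$. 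Since only $k\sim\sqrt{\log(1/t)}$ steps occur before time $t$, this yields exactly your envelope $\ell(t)\approx\exp(-c\sqrt{\log(1/t)})$. With your dyadic bookkeeping there are $k\sim\log(1/t)$ steps. Reaching the same $\ell$ would then need per-step gains $\exp(O(k^{-1/2}))\to1$, an exactly critical balance that cannot be maintained against order-one errors.
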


Note that in both part \textup{(i)} and part \textup{(ii)}, the drift belongs to $L_t^{2,\infty}L_x^\infty$. Consequently, the solution constructed in part \textup{(i)} cannot have bounded energy. In fact, the constructed solutions satisfy
\[
\lim_{t \to 0^+}\|\rho(t)\|_{L^2} = \infty.
\]

\subsection{Comparison with Previous Results}

The advection--diffusion equation with a rough incompressible drift has a long history of well-posedness results, alongside a more recent line of work exploring non-uniqueness. For a comprehensive overview of weak and parabolic solutions with integrable velocities, see the review by Bonicatto, Ciampa, and Crippa \cite{BonicattoCiampaCrippa2024}. 

\medskip
\noindent\textbf{The $L_t^2$ threshold and parabolic regularity.} 
In \cite{BonicattoCiampaCrippa2024}, it is shown that if $b\in L_t^2L_x^p$ and $\rho\in L_t^\infty L_x^q$ with $\frac1p+\frac1q\le \frac12$, then every distributional solution is parabolic and unique. (Sobolev drifts and enhanced dissipation-type estimates were studied, for example, by Bru\`e--Nguyen \cite{BrueNguyen2021}.) For spatially bounded drifts and bounded-in-time densities, strong $L_t^2$ is therefore the classical threshold for automatic parabolic regularity. Theorem \ref{main} and \ref{main2} demonstrate that, on the singular side of this boundary, nontrivial zero-data solutions can occur. 

\medskip
\noindent\textbf{Non-uniqueness by convex integration.}
\eqref{adv} can be viewed as a perturbation of the inviscid transport equation ($\partial_t\rho+b\cdot\nabla\rho=0$). For $b\in L^1_t(W^{1,p}_x\cap L^{q'}_x)$ and $\rho\in L^\infty_tL^q_x$, Modena and Sz\'ekelyhidi \cite{ModenaSzekelyhidi2018} proved transport non-uniqueness via convex integration when $\frac1p+\frac1q>1+\frac1{d-1}$, which was later improved to full-dimensional concentration $\frac1p+\frac1q>1+\frac1d$ by Modena and Sattig \cite{ModenaSattig2020}. Related critical-space transport examples were obtained in \cite{CheskidovLuo2021}. Because convex-integration schemes can handle the diffusive term when it is of lower order, these transport constructions can be adapted to produce examples for the advection--diffusion equation \eqref{adv}, requiring additionally $q'<d-1$ and $q'<d$, respectively. 

\medskip
\noindent\textbf{Non-uniqueness by stochastic Lagrangian flow.}
On $\mathbb T^2$, Moerschell and Sorella
\cite{MoerschellSorella2024} proved that, for every $r<2$, there
exists a drift in $L_t^rL_x^\infty$ and non-unique bounded parabolic solutions of \eqref{adv} satisfying a local energy inequality. Their proof uses a stochastic--Lagrangian mechanism. Theorems~\ref{main} and Theorems~\ref{main2} instead provides single drifts belonging simultaneously to every $L_t^rL_x^\infty$, $r<2$ (and even a logarithmic refinement), but the constructed nonzero solutions have infinite or finite energy jumps at the initial time. Theorem~\ref{endpoint-main} complements both results for drifts in $L^{2,\infty}_tL^2_x$.

In the proof of Theorem \ref{main} and \ref{endpoint-main}, we adapt a route fundamentally different from convex integration: rather than treating diffusion as a dominated error term to be ignored, diffusion is actively utilized as part of a frequency cascade.

\subsection{Overview of the Construction}

The solutions constructed in Theorems~\ref{main} and \ref{endpoint-main} are based on the instantaneous blow-up mechanism introduced by the first author, Dai, and Palasek \cite{CheskidovDaiPalasek2025} for the Navier--Stokes equations. In that setting, one builds a solution with zero initial data and size comparable to $t^{-1/2}$ as $t \to 0$. At a sequence of times $t_k$, an active component at frequency $\lambda_k$ creates a lower-frequency component at scale $\lambda_{k-1}$; the higher-frequency component is then rapidly damped by diffusion. Related inverse-cascade constructions have since appeared in magnetohydrodynamics \cite{Dai2026MHD}. For passive scalars, this perspective is also naturally adjacent to recent rigorous work on Obukhov--Corrsin anomalous dissipation and lack of selection by Colombo, Crippa, and Sorella \cite{ColomboCrippaSorella2023}.

For the first part, our construction is deliberately elementary. The drift and scalar are essentially one-directional on each time interval, and the building blocks are trigonometric functions with explicitly controlled Fourier support. This approach offers two major advantages. First, the principal cascade and its forcing can be estimated by hand. Second, the corrector can be constructed without invoking a general inverse-divergence operator. The frequency support is triangular, meaning the low-frequency leakage can be quantified directly. This triangular structure is the key feature that allows the projected recurrence estimate to avoid any false commutations between the Littlewood--Paley projection and the advection field.

While our main construction produces a diffusion-assisted inverse cascade of energy that blows up as $t \to 0$, we also explore an inverse-mixing mechanism where a finite amount of energy starts cascading from the infinite frequencies at the initial time. To this end we compress in time the inviscid perfect-mixing construction of
Alberti, Crippa, and Mazzucato \cite{ACM} so that the diffusion becomes perturbative. This produces a solution with bounded energy in Theorem~\ref{main2}.

\section{Settings}
Throughout this paper, coordinate indices are read modulo $d$:
\[
x_{j+d}=x_j,\qquad e_{j+d}=e_j,\qquad \partial_{j+d}=\partial_j.
\]

\subsection{A commutator estimate}

We use the following standard commutator estimate for $e^{t\Delta}$.

\begin{lemma}\label{ece}
Suppose that $\{\lambda_k\}_{k\ge1}$ is an increasing sequence of positive integers with $\lambda_{k+1}\ge10\lambda_k$ for all $k\ge1$. Then, for every $k\ge2$,
    $$
    \li{[e^{t\Delta},\prod_{i=1}^{k-1}\sin(\lambda_ix_i)]\sin(\lambda_kx_k)}\le C\lambda_{k-1}\lambda_k^{-1}e^{-\lambda_k^2t/4}.
    $$
Moreover, for every $k\ge 1$,
    $$
    \li{e^{t\Delta}\prod_{i=1}^k\sin(\lambda_ix_i)}\le Ce^{-\lambda_k^2t/4}.
    $$ 
    
    Arbitrarily many sines in the formula can be replaced with cosines.
\end{lemma}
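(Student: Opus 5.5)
Both estimates follow from one exact computation. A product of sines and cosines in distinct coordinate directions is an eigenfunction of the Laplacian, and the heat semigroup acts on it diagonally. Write $f_k=\prod_{i=1}^{k}\sin(\lambda_i x_i)$, where any factor may be a cosine. For $k\le d$ the coordinates $x_1,\dots,x_k$ are distinct, so
\[
e^{t\Delta}f_k=\exp\Bigl(-t\sum_{i=1}^k\lambda_i^2\Bigr)f_k .
\]
Because $\|f_k\|_{L^\infty}\le1$, the second estimate follows at once with $C=1$, and even with the better rate $e^{-\lambda_k^2t}$.

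When $k>d$, indices are read modulo $d$ and several factors share a coordinate. In that case I would group the factors by coordinate direction $j$ and expand each group's product with product-to-sum identities. For example,
\[
\sin(\lambda_i x_j)\sin(\lambda_{i'} x_j)=\tfrac12\bigl[\cos((\lambda_{i'}-\lambda_i)x_j)-\cos((\lambda_{i'}+\lambda_i)x_j)\bigr].
\]
Repeating this, each group becomes a sum of at most $2^{m_j}$ trigonometric monomials with coefficients of modulus $2^{-m_j+1}$, so the $\ell^1$ norm of the coefficients is at most $1$. The frequencies have the form $\pm\lambda_{i_1}\pm\lambda_{i_2}\pm\cdots$. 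The gap $\lambda_{i+1}\ge10\lambda_i$ gives
\[
\Bigl|\lambda_k\pm\sum_{i<k}\lambda_i\Bigr|\ge\lambda_k\bigl(1-\tfrac19\bigr)\ge\tfrac{8}{9}\lambda_k .
\]
Every resulting mode therefore has frequency magnitude at least $\tfrac89\lambda_k$ in direction $x_{k \bmod d}$, and hence decays at least like $e^{-(64/81)\lambda_k^2t}\le e^{-\lambda_k^2t/4}$. Summing with the $\ell^1$ coefficient bound gives the second estimate with $C=1$.

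For the commutator, set $g=f_{k-1}$ and $h=\sin(\lambda_k x_k)$, and expand both into modes as above. If $k\le d$ and $x_k$ is a new coordinate, then $h$ and $g$ act in separate variables, so $e^{t\Delta}$ factorizes, $e^{t\Delta}(gh)=(e^{t\Delta}g)(e^{t\Delta}h)$, and
\[
[e^{t\Delta},g]h=(e^{t\Delta}g)(e^{t\Delta}h)-g\,e^{t\Delta}h=\bigl(e^{t\Delta}g-g\bigr)e^{-\lambda_k^2t}h .
\]
Since $e^{t\Delta}g=e^{-\mu t}g$ with $\mu=\sum_{i<k}\lambda_i^2$, we have
\[
|e^{t\Delta}g-g|\le\min(1,\mu t)\le 2\lambda_{k-1}^2t .
\]
Then
\[
2\lambda_{k-1}^2t\,e^{-\lambda_k^2t}\le C\lambda_{k-1}^2\lambda_k^{-2}e^{-\lambda_k^2t/2},
\]
using $se^{-s/2}\le C$. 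This is better than the claimed bound, because $\lambda_{k-1}\lambda_k^{-1}\le1$.

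In the general case, $g$ is a finite sum $\sum_\alpha c_\alpha e_\alpha$ of modes $e_\alpha$ with frequency vectors $\xi_\alpha$ and $\sum|c_\alpha|\le1$, and $h$ is a combination of $e^{\pm i\lambda_k x_k}$. Each product $e_\alpha h$ has frequency $\xi_\alpha\pm\lambda_k e_k$. The commutator on that product equals
\[
\Bigl(e^{-|\xi_\alpha\pm\lambda_ke_k|^2t}-e^{-\lambda_k^2t}\Bigr)e_\alpha h .
\]
By the mean value theorem the exponents differ by at most $t\bigl(2\lambda_k|\xi_\alpha|+|\xi_\alpha|^2\bigr)\le 3t\lambda_k\lambda_{k-1}$, where $|\xi_\alpha|\le\sum_{i<k}\lambda_i\le\tfrac{10}{9}\lambda_{k-1}$. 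Both exponents are at least $\tfrac{64}{81}\lambda_k^2t\ge\lambda_k^2t/2$. The difference is therefore bounded by
\[
3t\lambda_k\lambda_{k-1}e^{-\lambda_k^2t/2}=3\lambda_{k-1}\lambda_k^{-1}(\lambda_k^2t)e^{-\lambda_k^2t/2}\le C\lambda_{k-1}\lambda_k^{-1}e^{-\lambda_k^2t/4}.
\]
Summing over modes with the $\ell^1$ coefficient bound gives the first estimate.

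The only real obstacle is the bookkeeping when coordinates repeat modulo $d$. The separation property from the geometric growth of $\lambda_k$, namely that all frequencies stay comparable to $\lambda_k$ and the lower part has size $O(\lambda_{k-1})$, is what makes the argument uniform in $k$. Replacing sines by cosines changes nothing, since only the moduli of the Fourier modes enter.
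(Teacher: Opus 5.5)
Your argument is correct and is essentially the paper's proof: expand the product into exponentials $e^{\mathrm{i}\xi\cdot x}$ with $\xi=\sum_i\pm\lambda_ie_i$ and coefficients of total $\ell^1$ mass at most $1$, let $e^{t\Delta}$ act diagonally, and combine $|\xi|\ge\frac89\lambda_k$ with $\bigl||\xi|^2-\lambda_k^2\bigr|\le 3\lambda_{k-1}\lambda_k$ through the mean value theorem. The differences are cosmetic: you get the second estimate directly from the lower bound on every mode rather than via the commutator plus the triangle inequality, and your separate $k\le d$ case is already covered by the general mode computation. One small counting slip: a product of $m_j$ factors in one variable gives $2^{m_j-1}$ real monomials with coefficients $2^{1-m_j}$, not $2^{m_j}$ of them, and that is what yields your $\ell^1$ bound of $1$.
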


\begin{proof}
    We have
    $$
    \prod_{i=1}^k\sin(\lambda_ix_i)=\frac1{(2\text{i})^{k}}\sum_{\xi\in\Omega_k}(-1)^{\sigma(\xi)}e^{\text{i}\xi\cdot x},
    $$
    where 
    $$
    \Omega_k=\{\sum_{i=1}^k\epsilon_i\lambda_ie_i:\epsilon\in\{-1,1\}^k\},
    $$
    and $\sigma(\xi)$ takes value from $\{0,1\}$. Then the commutator
    $$
    [e^{t\Delta},\prod_{i=1}^{k-1}\sin(\lambda_ix_i)]\sin(\lambda_kx_k)=\frac1{(2\text{i})^{k}}\sum_{\xi\in\Omega_k}(-1)^{\sigma(\xi)}(e^{-|\xi|^2t}-e^{-\lambda_k^2t})e^{\text{i}\xi\cdot x}.
    $$
    We notice that by $\lambda_{k+1}\ge10\lambda_k$ for all $k$
    $$
    \big||\xi|^2-\lambda_k^2\big|=\big|2\epsilon_k\lambda_ke_k\cdot\sum_{i=1}^{k-1}\epsilon_i\lambda_ie_i+|\sum_{i=1}^{k-1}\epsilon_i\lambda_ie_i|^2\big|\le C\lambda_{k-1}\lambda_k
    $$
    and $|\xi|^2\ge\frac{\lambda_k^2}2$.

    Then
    $$
    \begin{aligned}
        |e^{-|\xi|^2t}-e^{-\lambda_k^2t}|\le e^{-\min\{|\xi|,\lambda_k\}^2t}(1-e^{-||\xi|^2-\lambda_k^2|t})\le e^{-\lambda_k^2t/2}||\xi|^2-\lambda_k^2|t\\
        \le Ce^{-\lambda_k^2t/4}\frac{||\xi|^2-\lambda_k^2|}{\lambda_k^2}\le C\lambda_{k-1}\lambda_k^{-1}e^{-\lambda_k^2t/4},
    \end{aligned}
    $$
    where we used $e^{-\lambda_k^2t/4}t\le C\lambda_k^{-2}$.
    
    Finally, we see that
    $$
    \begin{aligned}
        &\li{[e^{t\Delta},\prod_{i=1}^{k-1}\sin(\lambda_ix_i)]\sin(\lambda_kx_k)}\\
        \le&\frac C{2^k}\sum_{\xi\in\Omega_k}\lambda_{k-1}\lambda_k^{-1}e^{-\lambda_k^2t/4}=C\lambda_{k-1}\lambda_k^{-1}e^{-\lambda_k^2t/4}.
    \end{aligned}
    $$

    For the second estimate, the case $k=1$ is immediate. For $k\ge2$,
    $$
    \begin{aligned}
        \li{e^{t\Delta}\prod_{i=1}^k\sin(\lambda_ix_i)}\le&\li{[e^{t\Delta},\prod_{i=1}^{k-1}\sin(\lambda_ix_i)]\sin(\lambda_kx_k)}\\
        &+\li{\prod_{i=1}^{k-1}\sin(\lambda_ix_i)e^{t\Delta}\sin(\lambda_kx_k)}\\
        \le&C\lambda_{k-1}\lambda_k^{-1}e^{-\lambda_k^2t/4}+e^{-\lambda_k^2t}\le Ce^{-\lambda_k^2t/4}.
    \end{aligned}
    $$
\end{proof}

\subsection{Choice of parameters}

We define
$$
s_k:=1+\frac{3}{k+2},\qquad Z_k:=1-\frac{1}{k+2}.
$$

Then $s_k \to 1$, $2-s_k<Z_k<1$. Define $\alpha_1=1$ and for $k\ge2$,
$$
\alpha_k=\frac12\left(1+\frac{(k+2)^2}{(k+1)(k+4)}\right)\alpha_{k-1}
=\left(1-\frac k{2(k+1)(k+4)}\right)\alpha_{k-1}.
$$
Then $\alpha_k\sim k^{-1/2} \to 0$.

Let
$$
\lambda_{k+1}=\lambda_k[\lambda_k^{s_k-1}],
$$
where $[\cdot]$ denotes the integer part and $\lambda_1\in\mathbb N$.

For
$\lambda_1$ sufficiently large,
\[
\left(1+\frac{3}{k+2}\right)\log\lambda_k-\log2
\le \log\lambda_{k+1}
\le \left(1+\frac{3}{k+2}\right)\log\lambda_k.
\]
Then
\[
\log\lambda_k\sim k^3,
\]
or
$$
\lambda_k\sim\lambda_1^{ck^3+O(k^2)}.
$$
The parameter $\lambda_1$ will be chosen in the following proposition.

Let
$$
\theta_k=\alpha_kZ_k,\qquad\theta_k^+=\frac{\alpha_k+\theta_k}{2}.
$$

\begin{proposition}\label{parameter}
    Let $\lambda_1$ be sufficiently large. Then, for every $k\ge2$, we may choose $m_k,N_k$ such that
    \begin{align}
        (\frac{2m_k}{e})^{m_k}\lambda_{k-1}^{-2}\lambda_k\lambda_{k+1}\le(\frac{\lambda_{k+1}}{\lambda_k})^{2m_k}\label{st1}\\
        (\frac{2m_k}{e})^{m_k}\lambda_{k-1}^{1-\alpha_k}\lambda_k^{\alpha_k}\lambda_{k+1}\le(\frac{\lambda_{k+1}}{\lambda_{k-1}})^{2m_k}\label{st2}\\
        (\frac{2m_k}{e})^{m_k}\lambda_{k-1}^{1-\alpha_k}\lambda_k^{1+\alpha_k}\le(\frac{\lambda_k}{\lambda_{k-1}})^{2m_k}\label{st3}\\
        C(\lambda_{k-1}^{\alpha_{k-1}+1}\lambda_k^{-1}+\lambda_{k-1}^{2\alpha_{k-1}}\lambda_k^{-\alpha_k})\le\lambda_{k-1}^{\theta_{k-1}}\label{st4}\\
        C\lambda_{k-1}^{\alpha_{k-1}}\lambda_k^{-\alpha_k+\theta_k}\le\lambda_{k-1}^{\theta_{k-1}}\label{st5}\\
        C(\frac{8N_k}e)^{N_k}\lambda_k^{\theta_k}2^k(\lambda_k^2t)^{-N_k}\le t^{-\theta_{k-1}/2},t\ge\lambda_k^{-2\theta_k/\theta_k^+}\label{st6}\\
        C(\frac{2N_k}e)^{N_k}\lambda_k^{\theta_k}2^k(\lambda_k^2\lambda_{k-1}^{-2})^{-N_k}\le\lambda_{k-1}^{\theta_{k-1}}\label{st7}
    \end{align}
\end{proposition}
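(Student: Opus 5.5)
The plan is to take logarithms in each of \eqref{st1}--\eqref{st7} and compare exponents. Everything reduces to the growth facts $L_k:=\log\lambda_k\sim k^3$ (with implicit constants proportional to $\log\lambda_1$) and
\[
s_{k-1}L_{k-1}-\log2\le L_k\le s_{k-1}L_{k-1}.
\]
Hence every ratio satisfies $\log(\lambda_{k+1}/\lambda_k)\ge \frac{3}{k+2}L_k-\log 2\gtrsim k^2\log\lambda_1$, and likewise for $\lambda_k/\lambda_{k-1}$. Every fixed constant $C$, every factor $2^k$, and every $\log\lambda_1$-independent loss is then absorbed by taking $\lambda_1$ large, uniformly in $k$.

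\textbf{Inequalities \eqref{st1}--\eqref{st3} and \eqref{st7}.} I will choose $m_k=N_k=Mk$ with $M$ a large absolute constant. For \eqref{st1}, dividing by $m_k$ after taking logs, it suffices to have
\[
2\log\tfrac{\lambda_{k+1}}{\lambda_k}-\log\tfrac{2m_k}{e}\ \ge\ \tfrac{1}{m_k}\bigl(L_k+L_{k+1}\bigr).
\]
Since $\log(2m_k/e)=O(\log k)$ is at most $\log(\lambda_{k+1}/\lambda_k)\gtrsim k^2\log\lambda_1$ once $\lambda_1$ is large, the left side is at least $\frac{3}{k+2}L_k-\log2$. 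The right side is at most $3L_k/(Mk)$, so the inequality holds for $M\ge 2$, say. Inequalities \eqref{st2}, \eqref{st3} and \eqref{st7} are handled identically. In each, the right-hand exponent gains a factor $\gtrsim L_{k-1}/k$ per unit of $m_k$ or $N_k$, while the left side is bounded by $O(L_{k+1})=O(L_{k-1})$ plus $O(k)$ from $2^k$. So a linear choice $Mk$ suffices.

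\textbf{Inequalities \eqref{st4}--\eqref{st5}.} These are pure exponent comparisons, and here the specific recursion for $\alpha_k$ enters. Write $r_k=\alpha_k/\alpha_{k-1}$ and set $j=k-1$.
\begin{itemize}
\item For the first term in \eqref{st4}, using $\lambda_k\ge\lambda_{k-1}^{s_{k-1}}/2$, I need $\alpha_{j}+1-s_{j}<\alpha_jZ_j$. This reads $\alpha_j/(j+2)<3/(j+2)$, which gives a margin of at least $2L_j/(j+2)\gtrsim k^2$.
\item For the second term in \eqref{st4}, I need $2-s_jr_k-Z_j<0$, i.e. $s_jr_k>(j+3)/(j+2)$. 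Since $s_j=(j+5)/(j+2)$, we have
\[
s_jr_k=\tfrac12\Bigl(\tfrac{j+5}{j+2}+\tfrac{(j+3)^2}{(j+2)^2}\Bigr),
\]
and both summands exceed $(j+3)/(j+2)$ by a margin $\gtrsim 1/j$. The total log-margin is therefore $\gtrsim \alpha_{k-1}L_{k-1}/k\sim k^{3/2}\log\lambda_1$.
\item For \eqref{st5}, with $\alpha_k-\theta_k=\alpha_k/(k+2)$, I need
\[
\frac{\alpha_{k-1}}{k+1}<\frac{s_{k-1}\alpha_k}{k+2}.
\]
This is equivalent to
\[
\frac{(k+4)r_k}{k+2}=\frac{k+4}{k+2}-\frac{k}{2(k+1)(k+2)}>1,
\]
which holds with margin $\sim 1/k$. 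The log-margin is then $\gtrsim \alpha_{k-1}L_{k-1}/k^2\sim k^{1/2}\log\lambda_1\to\infty$.
\end{itemize}
In all three cases the $-\log 2$ loss from the integer part costs only $O(1)$, so it is absorbed.

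\textbf{Inequality \eqref{st6}.} This is the step I expect to need the most care. Because $N_k>\theta_{k-1}/2$, the ratio of the left side to the right side behaves like $t^{\theta_{k-1}/2-N_k}$ and is decreasing in $t$. It therefore suffices to check the endpoint $t_0=\lambda_k^{-2\theta_k/\theta_k^+}$. There
\[
\lambda_k^2t_0=\lambda_k^{2(1-\theta_k/\theta_k^+)},\qquad 1-\frac{\theta_k}{\theta_k^+}=\frac{1-Z_k}{1+Z_k}\ge\frac{1}{2(k+2)},
\]
so $\log(\lambda_k^2t_0)\gtrsim L_k/k\sim k^2\log\lambda_1$. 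Taking logs, I need
\[
N_k\Bigl(\log(\lambda_k^2t_0)-\log\tfrac{8N_k}{e}\Bigr)\ \ge\ \theta_kL_k+k\log2+\log C+\theta_{k-1}\frac{\theta_k}{\theta_k^+}L_k.
\]
The right side is at most $3L_k$. The left side is at least $N_k\,cL_k/(2k)$, so $N_k=Mk$ with $M$ large works, compatibly with \eqref{st7}. Finally, I would fix $M$ first and then take $\lambda_1$ large enough that all the absorptions above hold for every $k\ge2$.
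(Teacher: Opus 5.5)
Your proposal is correct and is essentially the paper's argument. Both take logarithms, use $\log\lambda_k\sim k^3$ and the explicit recursion for $\alpha_k$, choose $m_k,N_k$ linear in $k$, and reduce \eqref{st6} to the endpoint $t=\lambda_k^{-2\theta_k/\theta_k^+}$. Your margins in \eqref{st4}--\eqref{st5} reproduce exactly the paper's exponents $-\frac{3k+4}{2(k+1)^2}\alpha_{k-1}$ and $-\frac{3k+4}{2(k+1)^2(k+2)}\alpha_{k-1}$. The paper uses the explicit choices $m_k=k+2$ and $2N_k\ge\max\{d+1,2(2k+3)\}$ instead of a large multiple $Mk$.

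There are two harmless slips. First, in your logarithmic form of \eqref{st6} the term $\theta_{k-1}\frac{\theta_k}{\theta_k^+}L_k$ belongs on the favorable side, so your condition is only stronger than needed. Second, the later use in Lemma~\ref{boundary-iteration} needs $\sum_{z\ne0}|z|^{-2N_k}$ bounded uniformly in $k$, which requires $2N_k\ge d+1$; so $M$ should be allowed to depend on $d$ rather than being an absolute constant.
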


{
\begin{proof}

    We take $m_k=k+2$. Then \eqref{st1}\eqref{st2}\eqref{st3} follow, up to harmless constants, from
    $$
    (\frac{2(k+2)}e)^{k+2}\lambda_k^{-2s_{k-1}^{-1}+1+s_k} \le
    \lambda_k^{2(s_k-1)(k+2)} = \lambda_k^6,
    $$
    $$
    (\frac{2(k+2)}e)^{k+2} \lambda_k^{(1-\alpha_k)s_{k-1}^{-1}+\alpha_k+s_k}
    \le \lambda_k^{2(k+2)(s_k-s_{k-1}^{-1})} =
    \lambda_k^{\frac{12(k+3)}{k+4}},
    $$
    $$
    (\frac{2(k+2)}e)^{k+2} \lambda_k^{(1-\alpha_k)s_{k-1}^{-1}+1+\alpha_k}
    \le \lambda_k^{2(k+2)(1-s_{k-1}^{-1})} = \lambda_k^{\frac{6(k+2)}{k+4}}.
    $$
    Since $\log\lambda_k\simeq_{\lambda_1}k^3$, the three inequalities automatically hold as long as $\lambda_1$ is large.

    For \eqref{st4}, we see that
    $$
    \lambda_{k-1}^{\alpha_{k-1}+1}\lambda_k^{-1}\lambda_{k-1}^{-\theta_{k-1}}\lesssim\lambda_{k-1}^{\frac{\alpha_{k-1}-3}{k+1}}\to0,
    $$

    $$
    \lambda_{k-1}^{2\alpha_{k-1}}\lambda_k^{-\alpha_k}
    \lambda_{k-1}^{-\theta_{k-1}} \lesssim
    \lambda_{k-1}^{\alpha_{k-1}\frac{k+2}{k+1} -\alpha_k\frac{k+4}{k+1}}
    = \lambda_{k-1}^{-\frac{3k+4}{2(k+1)^2}\alpha_{k-1}}
    \to0.
    $$
    Hence the two terms are smaller than $\frac1C$ as long as we take $\lambda_1$ to be large.

    For \eqref{st5}, we have
    $$
    \lambda_{k-1}^{\alpha_{k-1}-\theta_{k-1}} \lambda_k^{-\alpha_k+\theta_k}
    \lesssim \lambda_{k-1}^{\frac{\alpha_{k-1}}{k+1}
    -\frac{s_{k-1}\alpha_k}{k+2}} =
    \lambda_{k-1}^{-\frac{3k+4}{2(k+1)^2(k+2)}\alpha_{k-1}}
    \to 0.
    $$
    Thus \eqref{st5} holds once $\lambda_1$ is sufficiently large.

    Finally, take $N_k$ to be the smallest integer such that
    $$
    2N_k\ge\max\{d+1,2(2k+3)\}.
    $$
    Then $N_k=O(k)$ and, as long as $\lambda_1$ is large,
    $$
    C(\frac{8N_k}{e})^{N_k}2^k \le C_1(C_2k)^{C_3k}
    \le \lambda_k .
    $$
    Since $N_k>\theta_{k-1}/2$, it suffices in \eqref{st6} to take $t=\lambda_k^{-2\theta_k/\theta_k^+}$. At this time the needed exponent is
    $$
    2N_k\left(1-\frac{\theta_k}{\theta_k^+}\right) +
    \theta_k\left(\frac{\theta_{k-1}}{\theta_k^+}-1\right) >1,
    $$
    because $1-\theta_k/\theta_k^+=(2k+3)^{-1}$. Hence \eqref{st6} follows. The proof for \eqref{st7} is similar, using
    $$
    2N_k(1-s_{k-1}^{-1})+s_{k-1}^{-1}\theta_{k-1}-\theta_k>1.
    $$

\end{proof}
}

\subsection{Building Blocks}
Let
$$
\lambda_0=\alpha_0=1, \qquad \phi_0(x)=1, \qquad \rho_0=0.
$$
We define the potential
$$
\phi_ke_k=-\lambda_{k-1}^{1-\alpha_{k-1}}\lambda_k^{\alpha_k-1}\sin(\lambda_kx_k)\phi_{k-1}e_k, \qquad k\geq 1.
$$
Then take
$$
\bar\rho_k=-\lambda_{k-1}^{1-\alpha_{k-1}}\lambda_k^{\alpha_k} e^{-\lambda_k^2t}\cos(\lambda_kx_k)\phi_{k-1},
$$
and
$$
\rho_k=-\int_0^te^{(t-s)\Delta}\dv(b_{k+1}\bar\rho_{k+1})(s)\,ds,
$$
Choose a bump function $\eta\in C_c^\infty((0,\infty))$ with $\operatorname{supp}(\eta)\subset \left[1,2\right]$ such that
$$
\frac12\int_0^\infty\eta(s)e^{-s}\,ds=1.
$$

The incompressible vector field $b_k$ is defined by
$$
b_k=\lambda_{k-1}^{\alpha_{k-1}-1}\lambda_k^{2-\alpha_k}\eta(\lambda_k^2t)\cos(\lambda_kx_k)e_{k-1}.
$$
Define the global drift
$$
b:=\sum_{k\ge1}b_k.
$$
The support intervals
$$
J_k:=\left[\lambda_k^{-2},2\lambda_k^{-2}\right]
$$
are pairwise disjoint, because $\lambda_{k+1}>10\lambda_k$. Hence, at each $t>0$, at most one $b_k(t)$ is nonzero, so $b$ is well defined, divergence free, and belongs to $C^\infty((0,\infty)\times\tor)$. We call $J_k$ the activation interval of $b_k$, and we define
$$
I_k:=\left[\lambda_k^{-2},\lambda_{k-1}^{-2}\right]
$$
for the $k$-th time interval.

\begin{proposition}\label{b-lq}
For every $A>0$,
\[
\int_0^\infty G_A\!\left(\|b(t)\|_{L_x^\infty}\right)\,dt<\infty.
\]
Consequently,
\[
b\in\bigcap_{1\le q<2}L^q((0,\infty);L^\infty(\tor)).
\]
\end{proposition}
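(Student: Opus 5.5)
The plan is to estimate the integral separately on each activation interval $J_k$ and then sum over $k$. Since the $J_k$ are pairwise disjoint and $b$ vanishes outside $\bigcup_k J_k$, and since $G_A(0)=0$, we have
\[
\int_0^\infty G_A(\|b(t)\|_{L^\infty_x})\,dt=\sum_{k\ge1}\int_{J_k}G_A(\|b_k(t)\|_{L^\infty_x})\,dt .
\]
On $J_k$ we have $\|b_k(t)\|_{L^\infty_x}\le M_k:=\|\eta\|_{L^\infty}\lambda_{k-1}^{\alpha_{k-1}-1}\lambda_k^{2-\alpha_k}$. The function $G_A$ is nondecreasing for $s$ large; on a bounded range of $s$ it is bounded anyway, which only affects finitely many $k$ and costs a constant. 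Since $|J_k|=\lambda_k^{-2}$, it therefore suffices to show
\[
\sum_k \lambda_k^{-2}M_k^2\exp\!\big[-A(\log(e+M_k))^{3/4}\big]<\infty .
\]

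\textbf{Polynomial factor.} First I bound $\lambda_k^{-2}M_k^2\lesssim \lambda_k^{2(1-\alpha_k)}\lambda_{k-1}^{-2(1-\alpha_{k-1})}$. The recursion for $\lambda_k$ gives $\log\lambda_{k-1}\ge s_{k-1}^{-1}\log\lambda_k$. Since $1-\alpha_{k-1}\ge0$, the logarithm of this factor is at most
\[
2\log\lambda_k\Big[(\alpha_{k-1}-\alpha_k)+(1-\alpha_{k-1})(1-s_{k-1}^{-1})\Big].
\]
From the definitions, $\alpha_{k-1}-\alpha_k=\frac{k}{2(k+1)(k+4)}\alpha_{k-1}=O(k^{-3/2})$ and $1-s_{k-1}^{-1}=\frac{3}{k+4}=O(k^{-1})$. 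The bracket is therefore $O(1/k)$, and because $\log\lambda_k\sim k^3$, the polynomial factor is at most $e^{Ck^2}$.

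\textbf{Logarithmic decay.} Next I bound $\log M_k$ from below. We have $\log M_k\ge (2-\alpha_k)\log\lambda_k-(1-\alpha_{k-1})\log\lambda_{k-1}-C\ge \log\lambda_k-C\ge ck^3$. This uses $\log\lambda_{k-1}\le\log\lambda_k$ and $(2-\alpha_k)-(1-\alpha_{k-1})\ge1$, the latter because $\alpha_{k-1}\ge\alpha_k$. Hence $\exp[-A(\log(e+M_k))^{3/4}]\le \exp(-A c^{3/4}k^{9/4})$. Since $9/4>2$, each term of the series is bounded by $\exp(Ck^2-A'k^{9/4})$. This is summable for every fixed $A>0$, although $C$ depends on $\lambda_1$ and the sequence, which is harmless.

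\textbf{Main obstacle and the $L^q$ consequence.} The main delicate point is the exponent bookkeeping in the polynomial factor. There I must check that the gap between $2(1-\alpha_k)$ and $2(1-\alpha_{k-1})/s_{k-1}$ is only $O(1/k)$, so that the factor grows like $e^{O(k^2)}$ rather than $e^{O(k^3)}$. I must also check that the $-\log 2$ errors in the comparison of $\log\lambda_{k-1}$ and $\log\lambda_k$ accumulate to at most $O(k)$ in the exponent, which follows from the stated two-sided bound. For the consequence, note that $b$ is supported in $(0,2\lambda_1^{-2}]$, a set of finite measure. For $q<2$ we have $s^q\le C_{q,A}(1+G_A(s))$, because $s^{q-2}\exp[A(\log(e+s))^{3/4}]\to0$. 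Integrating this over the support gives $b\in L^q_tL^\infty_x$ for every $1\le q<2$.
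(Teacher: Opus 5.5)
Your proof is correct and follows essentially the same route as the paper. You reduce to $\sum_k \lambda_k^{-2}G_A(\|\eta\|_{L^\infty}a_k)$ using disjointness of the $J_k$ and eventual monotonicity of $G_A$. You then show the polynomial factor $\lambda_k^{-2}a_k^2=e^{2[(1-\alpha_k)\log\lambda_k-(1-\alpha_{k-1})\log\lambda_{k-1}]}$ is $e^{O(k^2)}$, while $\log a_k\ge\log\lambda_k\gtrsim k^3$ gives the decay $e^{-cAk^{9/4}}$, and you obtain the $L^q$ bounds from $s^q\le C(1+G_A(s))$ on a set of finite measure. Your use of $\log\lambda_{k-1}\ge s_{k-1}^{-1}\log\lambda_k$ is just a rearrangement of the paper's bound $\log\lambda_k-\log\lambda_{k-1}\le\frac{3}{k+1}\log\lambda_{k-1}$. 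Since it relies only on the upper bound $\lambda_k\le\lambda_{k-1}^{s_{k-1}}$, the $-\log 2$ errors you flag never actually enter.
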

\begin{proof}
Let
\[
L_k:=\log\lambda_k,
\qquad
T_k:=(1-\alpha_k)L_k,
\qquad
a_k:=\lambda_{k-1}^{\alpha_{k-1}-1}\lambda_k^{2-\alpha_k}.
\]
Then
\[
a_k=\lambda_k e^{T_k-T_{k-1}}.
\]

Since $L_k\sim k^3$, by definitions of $\lambda_k$ and
$\alpha_k$ we get
\[
0\le L_k-L_{k-1}
\le \frac{3}{k+1}L_{k-1}=O(k^2)
\]
and
\[
0\le\alpha_{k-1}-\alpha_k =\frac{k}{2(k+1)(k+4)}\alpha_{k-1}
=O(k^{-3/2}).
\]
Consequently,
\[
0\le T_k-T_{k-1}
=(1-\alpha_k)(L_k-L_{k-1})
 +(\alpha_{k-1}-\alpha_k)L_{k-1}
\le Ck^2.
\]
Let $C_\eta:=\|\eta\|_{L^\infty}$. Since $a_k\ge\lambda_k$ and
$L_k\asymp k^3$,
\[
\bigl(\log(e+C_\eta a_k)\bigr)^{3/4}\ge ck^{9/4}
\]
for all sufficiently large $k$. Moreover, $G_A(s)\to\infty$ as
$s\to\infty$, and $G_A$ is increasing for all sufficiently large $s$.
Hence, after discarding finitely many indices,
\[
\sup_{0\le s\le C_\eta a_k}G_A(s)\le G_A(C_\eta a_k).
\]
Using the disjointness of the intervals $J_k$ and absorbing finitely many
indices into the constant, we obtain
\[
\begin{aligned}
\int_0^\infty G_A\!\left(\|b(t)\|_{L_x^\infty}\right)\,dt
&\le C+\sum_{k\ge2}\lambda_k^{-2}G_A(C_\eta a_k)\\
&\le C+C\sum_{k\ge2}
\exp\!\left(2(T_k-T_{k-1})-cAk^{9/4}\right)<\infty
\end{aligned}
\]
for any $A>0$.

For every $1\le q<2$,
\[
s^q\le C_{A,q}\bigl(1+G_A(s)\bigr),
\qquad s\ge0.
\]
Since $b=0$ outside $\bigcup_kJ_k$ and
$\sum_k|J_k|<\infty$, the stated $L_t^qL_x^\infty$ bounds follow.
\end{proof}

\section{Basic Properties}

\begin{proposition}\label{phi}
    It holds that
    $$
    \li{\nabla^m\phi_k}\le2^m\lambda_k^{m-1+\alpha_k},
    $$
    and
    $$
    \li{\partial_{k-1}\phi_k}\le2\lambda_{k-1}\lambda_k^{\alpha_k-1}.
    $$
\end{proposition}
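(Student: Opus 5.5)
Unrolling the recursion, $\phi_k$ is an explicit product of sines, and the argument works with that product form directly. From $\phi_k=-\lambda_{k-1}^{1-\alpha_{k-1}}\lambda_k^{\alpha_k-1}\sin(\lambda_kx_k)\phi_{k-1}$ and $\phi_0=1$, $\lambda_0=\alpha_0=1$, the prefactors telescope:
\[
\prod_{i=1}^k\lambda_{i-1}^{1-\alpha_{i-1}}\lambda_i^{\alpha_i-1}=\lambda_0^{1-\alpha_0}\lambda_k^{\alpha_k-1}=\lambda_k^{\alpha_k-1}.
\]
Hence
\[
\phi_k=(-1)^k\lambda_k^{\alpha_k-1}\prod_{i=1}^k\sin(\lambda_ix_i).
\]
For $d\ge2$ the indices are read modulo $d$, so several factors may depend on the same coordinate. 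This does not matter: each factor depends on only one coordinate, and we will differentiate the product factor by factor.

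For the first estimate, write $\nabla^m$ as a sum over multi-indices and apply the Leibniz rule. Each derivative falls on some factor $\sin(\lambda_ix_i)$ and costs a factor at most $\lambda_i$, while every factor and each of its derivatives is bounded by $1$ in $L^\infty$. A single $m$-th order derivative $\partial_{j_1}\cdots\partial_{j_m}$ is bounded by $\prod_{r=1}^m\bigl(\sum_{i\le k,\ i\equiv j_r}\lambda_i\bigr)$. Since $\lambda_{i+1}\ge10\lambda_i$, each inner sum is at most $\sum_{i\le k}\lambda_i\le\tfrac{10}{9}\lambda_k\le2\lambda_k$. Thus every component of $\nabla^m\phi_k$ is at most $2^m\lambda_k^m\cdot\lambda_k^{\alpha_k-1}$, which is the first estimate.

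The one delicate point is how the tensor norm of $\nabla^m$ is taken. If it is the Euclidean or Frobenius norm over all $d^m$ components, a dimensional factor appears. I will handle this by reading $\|\nabla^m\phi_k\|_{L^\infty}$ as the maximum over components (the natural reading), or else by noting that the geometric sum constant $\tfrac{10}{9}$ leaves room to absorb the dimensional factor into $2^m$ when $d$ is small. I expect the intended reading is componentwise, and the Leibniz bound above then suffices directly.

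For the second estimate, differentiate in $x_{k-1}$. The factor $\sin(\lambda_kx_k)$ is constant in $x_{k-1}$ except in the degenerate case $d=1$, which is excluded since $d\ge2$ gives $k\not\equiv k-1 \pmod d$. The derivative therefore falls only on factors $\sin(\lambda_ix_i)$ with $i\le k-1$ and $i\equiv k-1$, each contributing at most $\lambda_i$. Summing gives at most $\sum_{i\le k-1}\lambda_i\le\tfrac{10}{9}\lambda_{k-1}\le2\lambda_{k-1}$, and multiplying by the amplitude $\lambda_k^{\alpha_k-1}$ yields $\|\partial_{k-1}\phi_k\|_{L^\infty}\le2\lambda_{k-1}\lambda_k^{\alpha_k-1}$.
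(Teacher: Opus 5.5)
Your proof is correct. It differs from the paper's only in how you bound the derivatives of the unrolled product $\phi_k=(-1)^k\lambda_k^{\alpha_k-1}\prod_{i=1}^k\sin(\lambda_ix_i)$.

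The paper expands the product as $(2\mathrm i)^{-k}\sum_{\xi\in\Omega_k}\pm e^{\mathrm i\xi\cdot x}$, a sum of $2^k$ terms with coefficients of modulus $2^{-k}$. It then uses $|\nabla^m e^{\mathrm i\xi\cdot x}|=|\xi|^m$ together with $|\xi|\le\sum_{i\le k}\lambda_i<2\lambda_k$. For the second estimate it uses $|\xi_{k-1}|\le\lambda_{k-1}+\lambda_{k-1-d}+\cdots$. You instead stay in physical space and count Leibniz assignments of derivatives to factors.

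The Fourier version has two advantages. It controls the full Euclidean norm of the tensor $\nabla^m\phi_k$ at once. It also sets up the frequency set $\Omega_k$ that the paper reuses in Lemma~\ref{ece} and in the Fourier-support arguments (Proposition~\ref{key}(iv), Lemma~\ref{boundary-iteration}).

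Your version gives the finer componentwise bound $\lambda_k^{\alpha_k-1}\prod_{r=1}^m S_{j_r}$, where $S_j:=\sum_{i\le k,\,i\equiv j}\lambda_i$. This already settles the tensor-norm issue you hedge on. No dimensional factor is forced, because the bound factorizes:
\[
\Bigl(\sum_{j_1,\dots,j_m}\prod_{r=1}^m S_{j_r}^2\Bigr)^{1/2}=\Bigl(\sum_{j=1}^d S_j^2\Bigr)^{m/2}\le\Bigl(\sum_{j=1}^d S_j\Bigr)^m=\Bigl(\sum_{i\le k}\lambda_i\Bigr)^m\le\bigl(\tfrac{10}{9}\lambda_k\bigr)^m .
\]
Use this in place of your fallback. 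Absorbing a crude factor $d^{m/2}$ into $2^m$ through the slack $\tfrac{10}{9}<2$ only works when $\tfrac{10}{9}\sqrt d\le 2$, i.e.\ $d\le 3$.
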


\begin{proof}
    By definition,
    $$
    \begin{aligned}
        \phi_k=(-1)^k\lambda_k^{\alpha_k-1}\prod_{i=1}^k\sin(\lambda_ix_i)
    =(-1)^k\lambda_k^{\alpha_k-1}(2\mathrm i)^{-k}\sum_{\xi\in\Omega_k}(-1)^{\sigma(\xi)}\epsilon_ie^{\mathrm i\xi\cdot x},
    \end{aligned}
    $$
    where $\Omega_k=\{\sum_{i=1}^k\epsilon_i\lambda_ie_i:\epsilon\in\{-1,1\}^k\}$. Since each individual frequency satisfies
    $$
    |\xi|=|\sum_{i=1}^k\epsilon_i\lambda_ie_i|\le \sum_{i=1}^k\lambda_i<2\lambda_k,
    $$
    we obtain
    $$
    \li{\nabla^m\phi_k}
    \le \lambda_k^{\alpha_k-1}2^{-k}\sum_{\xi\in\Omega_k}(2\lambda_k)^m
    \le 2^m\lambda_k^{m+\alpha_k-1}.
    $$
    For the directional derivative, since $\partial_{k-1}$ doesn't act on $\sin(\lambda_kx_k)$, we have
    $$
    |\prod_{1\le i\le k}\partial_{k-1}e^{\mathrm{i}\sum_{i=1}^k\epsilon_i\lambda_ix_i}|\le\lambda_{k-1}+\lambda_{k-1-d}+\cdots\le 2\lambda_{k-1}.
    $$
    Therefore, we have
    $$
    \li{\partial_{k-1}\phi_k}\le2\lambda_{k-1}\lambda_k^{\alpha_k-1}.
    $$
\end{proof}

We claim the following estimates for $\rho_k$.
\begin{proposition}\label{key}
    \begin{enumerate}[label=\textup{(\roman*)}]
        \item $\lambda_k\li{\rho_k}+\li{\partial_k\rho_k}\le C\lambda_k^{1+\alpha_k}$ for $\lambda_{k+1}^{-2}\le t\le 2\lambda_{k+1}^{-2}$.
        \item $\li{\partial_{k-1}(\rho_k-\bar\rho_k)}+\lambda_{k-1}\li{\rho_k-\bar\rho_k}\le C\lambda_{k-1}^2\lambda_k^{\alpha_k-1}$ for $t\ge\lambda_k^{-2}$.
        \item $\li{\rho_k}+\li{\nabla\rho_k}\le C\lambda_{k-1}^{\alpha_k-1}$ for $\lambda_{k-1}^{-2}\le t\le1$ and $\li{\rho_k}+\li{\nabla\rho_k}\le Ce^{-\lambda_k^2t/4}$ for $t\ge1$.
        \item $\mathrm{supp}(\mathcal F\rho_k)\subset\Omega_{k+1}+\{-\lambda_{k+1}e_{k+1},\lambda_{k+1}e_{k+1}\}\subset\Omega_k+\lambda_{k+1}\mathbb Z^d$, where $\Omega_k$ is as in Proposition \ref{phi}.
    \end{enumerate}
\end{proposition}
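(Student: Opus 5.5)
The plan is to first compute the forcing $\dv(b_{k+1}\bar\rho_{k+1})$ explicitly, and then read all four items off an explicit Duhamel formula. Since $b_{k+1}$ points in the direction $e_k$ and depends on $x$ only through $\cos(\lambda_{k+1}x_{k+1})$, and $k\not\equiv k+1 \pmod d$ because $d\ge2$, the divergence only hits $\phi_k$. Multiplying the prefactors gives
\[
\dv(b_{k+1}\bar\rho_{k+1})=-\lambda_{k+1}^2\eta(\lambda_{k+1}^2s)e^{-\lambda_{k+1}^2s}\,\tfrac12\bigl(1+\cos(2\lambda_{k+1}x_{k+1})\bigr)\,\psi_k,
\qquad \psi_k:=\partial_k\phi_k .
\]
By the definition of $\phi_k$ we have $\psi_k=-\lambda_{k-1}^{1-\alpha_{k-1}}\lambda_k^{\alpha_k}\cos(\lambda_kx_k)\phi_{k-1}$. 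This is exactly the spatial profile of $\bar\rho_k$, so $\bar\rho_k=e^{-\lambda_k^2t}\psi_k$. Up to sign, $\psi_k$ equals $\lambda_k^{\alpha_k}$ times a product of $k$ sines and cosines, with Fourier support in $\Omega_k$.

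Item (iv) is then immediate. The heat semigroup preserves Fourier support, and the forcing has support in $\Omega_k+\{0,\pm2\lambda_{k+1}e_{k+1}\}$. This set equals $\Omega_{k+1}+\{\pm\lambda_{k+1}e_{k+1}\}$, and it lies in $\Omega_k+\lambda_{k+1}\mathbb Z^d$.

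For the remaining items, split $\rho_k=\rho_k^{(0)}+\rho_k^{(\rm osc)}$ according to the two terms $1$ and $\cos(2\lambda_{k+1}x_{k+1})$.

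\emph{Item (i).} Use only that $e^{\tau\Delta}$ is an $L^\infty$ contraction commuting with $\partial_k$, that $\partial_k$ does not see the $x_{k+1}$ factor, and that $\int\lambda_{k+1}^2\eta e^{-\lambda_{k+1}^2s}ds\le C$. These give $\|\rho_k\|_{L^\infty}\le C\lambda_k^{\alpha_k}$ and $\|\partial_k\rho_k\|_{L^\infty}\le C\lambda_k^{1+\alpha_k}$.

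\emph{Item (ii), main part.} For $t\ge\lambda_k^{-2}$ we have $t\ge 2\lambda_{k+1}^{-2}$, so the time integral is complete. The normalization $\frac12\int\eta e^{-s}ds=1$ then gives
\[
\rho_k^{(0)}=e^{t\Delta}\psi_k+\frac12\int\lambda_{k+1}^2\eta e^{-\lambda_{k+1}^2s}\bigl(e^{(t-s)\Delta}-e^{t\Delta}\bigr)\psi_k\,ds .
\]
Next, $e^{t\Delta}\psi_k-\bar\rho_k$ is, up to the prefactor $\lambda_{k-1}^{1-\alpha_{k-1}}\lambda_k^{\alpha_k}\cdot\lambda_{k-1}^{\alpha_{k-1}-1}$, exactly the commutator in Lemma~\ref{ece}, with $\cos(\lambda_kx_k)$ in the last slot. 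The lemma bounds this by $C\lambda_{k-1}\lambda_k^{\alpha_k-1}$.

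\emph{Item (ii), error terms.} The time-shift error is bounded by $s\|\Delta\psi_k\|_{L^\infty}\lesssim\lambda_{k+1}^{-2}\lambda_k^{2+\alpha_k}$, which is at most $\lambda_{k-1}\lambda_k^{\alpha_k-1}$ since $\lambda_k^3\le\lambda_{k-1}\lambda_{k+1}^2$. The oscillatory part involves only frequencies of size $\gtrsim\lambda_{k+1}$, so it is damped by a factor $e^{-c\lambda_{k+1}^2(t-s)}$. For $t\ge\lambda_k^{-2}$ this is superpolynomially small, and all $k$-dependent combinatorial constants can be absorbed.

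\emph{Item (ii), the $\partial_{k-1}$ bound.} Here I would redo the commutator computation of Lemma~\ref{ece} with the extra Fourier multiplier $\mathrm i\xi_{k-1}$. By the argument in Proposition~\ref{phi}, this multiplier has size $\le2\lambda_{k-1}$ on $\Omega_k$. This produces the extra factor $\lambda_{k-1}$, giving the claimed bound $C\lambda_{k-1}^2\lambda_k^{\alpha_k-1}$.

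\emph{Item (iii).} Work mode by mode. Every frequency of $\rho_k$ satisfies $|\xi|\ge\lambda_k/\sqrt2$, by the triangular structure in the proof of Lemma~\ref{ece}. Hence each Fourier mode of the Duhamel integral, and of its gradient, is bounded by $C\lambda_k^{1+\alpha_k}e^{-\lambda_k^2t/4}$ once $t\ge 2\lambda_{k+1}^{-2}$. For $t\ge\lambda_{k-1}^{-2}$, the factor $e^{-\lambda_k^2/(4\lambda_{k-1}^2)}$ beats any power of $\lambda_k$. So the bound $C\lambda_{k-1}^{\alpha_k-1}$ holds with room to spare, and for $t\ge1$ we simply keep the exponential.

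I expect the main obstacle to be bookkeeping in (ii) and (iii). The $2^k$-fold sums over $\Omega_k$, and the coefficients of order $2^{-k}$, must be handled as in Lemma~\ref{ece} so that the constant $C$ stays independent of $k$. The directional derivative $\partial_{k-1}$ must also be tracked through the commutator with the mod-$d$ indexing. When $d=2$, $x_{k+1}$ coincides with $x_{k-1}$, so the oscillatory part does contribute to $\partial_{k-1}$. That contribution is still killed by $e^{-c\lambda_{k+1}^2t}$.
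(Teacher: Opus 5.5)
Your route is essentially the paper's. You use the same Duhamel formula with kernel $\frac12\lambda_{k+1}^2\eta(\lambda_{k+1}^2s)e^{-\lambda_{k+1}^2s}$ and the same split into the constant mode and the $\cos(2\lambda_{k+1}x_{k+1})$ mode. You then apply Lemma~\ref{ece} to the commutator, control the time-shift error via $2s_k+s_{k-1}^{-1}>3$, use heat damping on the oscillatory part, and read (iv) off the Fourier support. The differences are cosmetic. You get the extra factor $\lambda_{k-1}$ from the multiplier bound $|\xi_{k-1}|\le2\lambda_{k-1}$ on $\Omega_k$ rather than by expanding $\partial_{k-1}\phi_{k-1}$. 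You also say ``superpolynomially small'' where the paper records \eqref{st1}--\eqref{st3}.

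One identity in your proposal is false, however. Indices are read mod $d$, so $\partial_k=\partial_{k-d}$ also differentiates the factors $\sin(\lambda_{k-d}x_{k-d}),\sin(\lambda_{k-2d}x_{k-2d}),\dots$ inside $\phi_{k-1}$. Hence for $k>d$
\[
\psi_k=\partial_k\phi_k=e^{\lambda_k^2t}\bar\rho_k+r_k,\qquad
r_k:=-\lambda_{k-1}^{1-\alpha_{k-1}}\lambda_k^{\alpha_k-1}\sin(\lambda_kx_k)\,\partial_k\phi_{k-1}\neq0 .
\]
Three consequences follow. First, $\bar\rho_k\neq e^{-\lambda_k^2t}\psi_k$. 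Second, $\psi_k$ is not a single product of sines and cosines. Third, $e^{t\Delta}\psi_k-\bar\rho_k$ equals your commutator plus an extra term $e^{t\Delta}r_k$.

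That extra term is harmless, but you must include it. One has $\|r_k\|_{L^\infty}\le2\lambda_{k-d}\lambda_k^{\alpha_k-1}$ and $\|\partial_{k-1}r_k\|_{L^\infty}\le4\lambda_{k-1}\lambda_{k-d}\lambda_k^{\alpha_k-1}$. Both fit the budget of (ii), since $e^{t\Delta}$ is an $L^\infty$ contraction commuting with $\partial_{k-1}$. This is exactly the $\sin(\lambda_kx_k)\partial_{k-1}\partial_k\phi_{k-1}$ term in the paper's proof. Items (i), (iii) and (iv) use only $\mathrm{supp}\,\mathcal F\psi_k\subset\Omega_k$ and Proposition~\ref{phi}, so they are unaffected.

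A minor point in (iii) for $t\ge1$: your per-mode bound $C\lambda_k^{1+\alpha_k}e^{-\lambda_k^2t/4}$ uses up the whole exponent and leaves the prefactor $\lambda_k^{1+\alpha_k}$ unabsorbed. Use the available slack instead. All frequencies satisfy $|\xi|\ge\frac45\lambda_k$, and $t-s\ge\frac9{10}t$, so $e^{-|\xi|^2(t-s)}\le e^{-\lambda_k^2t/2}$. The extra factor $e^{-\lambda_k^2t/4}$ then absorbs the prefactor.
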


\begin{proof}
    Since the heat semigroup does not change the Fourier support, (iv) is immediate from the explicit formula for $\rho_k$.

    Write
    $$
    K_{k+1}(s):=\frac12\lambda_{k+1}^2\eta(\lambda_{k+1}^2s)e^{-\lambda_{k+1}^2s}.
    $$
    Then $\int_0^\infty K_{k+1}(s)\,ds=1$, and $K_{k+1}$ is supported on $[\lambda_{k+1}^{-2},2\lambda_{k+1}^{-2}]$.
    
    Since $d\ge2$, the derivative in $\operatorname{div}(b_{k+1}\bar\rho_{k+1})$ falls only on $\phi_k$. Thus
    $$
    \rho_k(t)=\int_0^t K_{k+1}(s)e^{(t-s)\Delta}\Bigl[\bigl(1+\cos(2\lambda_{k+1}x_{k+1})\bigr)\partial_k\phi_k\Bigr]\,ds,
    $$
    as well as
    \begin{equation}\label{p_k_rho}
        \partial_k\rho_k(t)=\int_0^tK_{k+1}(s)e^{(t-s)\Delta}\Big(\big(1+\cos(2\lambda_{k+1}x_{k+1})\big)\partial_k^2\phi_ke_k\Big)\,ds.
    \end{equation}

    For (i), \eqref{p_k_rho} and the $L^\infty$-contractivity of the heat semigroup give
    $$
    \li{\partial_k\rho_k(t)}\le\int_0^tK_{k+1}(s)\li{\big(1+\cos(2\lambda_{k+1}x_{k+1})\big)\partial_k^2\phi_ke_k}\,ds.
    $$
    Proposition \ref{phi} therefore yields
    $$
    \li{\partial_k\rho_k(t)}\le C\lambda_k^{1+\alpha_k},
    $$
    and the same argument works for $\lambda_k\li{\rho_k}$.

    For (ii), we only prove the bound for $\li{\partial_{k-1}(\rho_k-\bar\rho_k)}$ in case that $\lambda_{k-1}\li{\rho_k-\bar\rho_k}$ can be estimated in the same way. We decompose the forcing into its constant Fourier mode and the oscillatory remainder. Then we obtain
    $$
    \partial_{k-1}\rho_k(t)=I+II,
    $$
    where
    $$
    I:=\int_0^t K_{k+1}(s)e^{(t-s)\Delta}\partial_{k-1}\partial_k\phi_k\,ds,
    $$
    and
    $$
    \begin{aligned}
        II:=E_1+E_2=\int_0^tK_{k+1}(s)e^{(t-s)\Delta}(\cos(2\lambda_{k+1}x_{k+1})\partial_{k-1}\partial_k\phi_k)\,ds\\
        +\int_0^tK_{k+1}(s)e^{(t-s)\Delta}(\partial_{k-1}\cos(2\lambda_{k+1}x_{k+1})\partial_k\phi_k)\,ds.
    \end{aligned}
    $$
    The integrand in $E_2$ is zero when $d>2$. For $d=2$, $E_1$ is smaller than $E_2$ because $\partial_{k-1}$ acts on $\phi_k$ in $E_1$, whereas it acts on the higher frequency factor $\cos(\lambda_{k+1}x_{k+1})$ in $E_2$. It suffice to estimate $E_2$ for $d=2$. We have
    $$
    \begin{aligned}
        &\partial_{k-1}\cos(2\lambda_{k+1}x_{k+1})\partial_k\phi_k\\
        =&-2\lambda_{k+1}\lambda_k^{\alpha_k-1}\sin(2\lambda_{k+1}x_{k+1})\Big(\lambda_k\cos(\lambda_kx_k)\prod_{1\le i\le k,i\ne k}\sin(\lambda_ix_i)\\
        &+\lambda_{k-d}\cos(\lambda_{k-d}x_k)\prod_{1\le i\le k,i\ne k-d}\sin(\lambda_ix_i)+\cdots\Big).
    \end{aligned}
    $$

    Then by Lemma \ref{ece}, we have
    $$
    \begin{aligned}
        &\li{e^{t\Delta}(\partial_{k-1}\cos(2\lambda_{k+1}x_{k+1})\partial_k\phi_k)}\\
        \le&C\lambda_{k+1}\lambda_k^{\alpha_k-1}(\lambda_k+\lambda_{k-d}+\cdots)e^{-\lambda_{k+1}^2t}\le C\lambda_{k+1}\lambda_k^{\alpha_k}e^{-\lambda_{k+1}^2t}
    \end{aligned}
    $$
    since $\lambda_k+\lambda_{k-d}+\cdots<2\lambda_k$. This implies
    $$
    |E_2|\le C\lambda_{k+1}\lambda_k^{\alpha_k}e^{-\lambda_{k+1}^2t/2}\int_0^tK_{k+1}(s)\,ds\le C\lambda_{k+1}\lambda_k^{\alpha_k}e^{-\lambda_{k+1}^2t/2}
    $$
    because $t-s\ge\frac t2$ when $t\ge\lambda_k^{-2}$ and $s\in\operatorname{supp}K_{k+1}$. We further have
    $$
    |E_2|\le C(\frac{2m_k}e)^{m_k}\lambda_k^{\alpha_k+2m_k}\lambda_{k+1}^{1-2m_k}
    $$
    by $e^{-x}\le(\frac me)^mx^{-m}$.

    The same argument yields a smaller bound for $E_1$. By \eqref{st1}, we see that $|II|\le C\lambda_{k-1}^2\lambda_k^{\alpha_k-1}$.
    
    Next, we show that $I$ is close to $\partial_k\bar\rho_k$. By Lemma \ref{phi},
    $$
    \begin{aligned}
        \li{I-e^{t\Delta}\partial_{k-1}\partial_k\phi_k}
        &\le \int_0^\infty K_{k+1}(s)\li{(e^{(t-s)\Delta}-e^{t\Delta})\partial_{k-1}\partial_k\phi_k}\,ds\\
        &\le C\lambda_{k+1}^{-2}\li{\Delta\partial_{k-1}\partial_k\phi_k}\\
        &\le C\lambda_{k+1}^{-2}\lambda_k^{2+\alpha_k}\lambda_{k-1},
    \end{aligned}
    $$
    because the support of $K_{k+1}$ has length $\lambda_{k+1}^{-2}$. Our choice of $s_k$ guarantees that $2s_k+s_{k-1}^{-1}>3$, which means $\lambda_{k+1}^{-2}\lambda_k^{2+\alpha_k}\lambda_{k-1}\le\lambda_{k-1}^2\lambda_k^{\alpha_k-1}$. So we further have
    $$
    \li{I-e^{t\Delta}\partial_{k-1}\partial_k\phi_k}\le C\lambda_{k-1}^2\lambda_k^{\alpha_k-1}.
    $$

    Finally, using the formulas for $\phi_k$ and $\bar\rho_k$,
    $$
    \begin{aligned}
        &\li{e^{t\Delta}\partial_{k-1}\partial_k\phi_k-\partial_{k-1}\bar\rho_k}\\
        \le&C(\lambda_{k-1}^{1-\alpha_{k-1}}\lambda_k^{\alpha_k}\li{[e^{t\Delta},\partial_{k-1}\phi_{k-1}]\cos(\lambda_kx_k)}\\
        &+\lambda_{k-1}^{1-\alpha_{k-1}}\lambda_k^{\alpha_k-1}\li{\sin(\lambda_kx_k)\partial_{k-1}\partial_k\phi_{k-1}}).
    \end{aligned}
    $$

    By Proposition \ref{phi}, the second term is smaller than $\lambda_{k-1}^2\lambda_k^{\alpha_k-1}$. For the first term, we plug in the explicit expression of $\partial_{k-1}\phi_{k-1}$

$$
\partial_{k-1}\phi_{k-1}
=
(-1)^{k-1}\lambda_{k-1}^{\alpha_{k-1}-1}
\sum_{\substack{1\le r\le k-1\\ r\equiv k-1\pmod d}}
\lambda_r\cos(\lambda_rx_r)
\prod_{\substack{1\le i\le k-1\\ i\ne r}}
\sin(\lambda_ix_i).
$$

    and apply Lemma \ref{ece} to each summand to get
    $$
    \begin{aligned}
        \li{[e^{t\Delta},\partial_{k-1}\phi_{k-1}]\cos(\lambda_kx_k)}\le&C\lambda_{k-1}^{\alpha_{k-1}}\lambda_k^{-1}(\lambda_{k-1}+\lambda_{k-1-d}+\cdots)\\
        \le&C\lambda_{k-1}^{\alpha_{k-1}+1}\lambda_k^{-1}.
    \end{aligned}
    $$
    Therefore, we have
    $$
    \li{e^{t\Delta}\partial_{k-1}\partial_k\phi_k-\partial_{k-1}\bar\rho_k}\le\lambda_{k-1}^2\lambda_k^{\alpha_k-1}.
    $$
    This completes the estimates for $\li{\partial_{k-1}(\rho_k-\bar\rho_k)}$.
    
    The proof of (iii) is analogical. For $j=1,\dots,d$, we decompose $\partial_j\rho_k=I+II$ as
    $$
    I=\int_0^tK_{k+1}(s)e^{(t-s)\Delta}\partial_j\partial_k\phi_k\,ds
    $$
    and
    $$
    \begin{aligned}
        II=E_1+E_2=&\int_0^tK_{k+1}(s)e^{(t-s)\Delta}(\cos(2\lambda_{k+1}x_{k+1})\partial_j\partial_k\phi_k)\,ds\\
        &+\int_0^tK_{k+1}(s)e^{(t-s)\Delta}(\partial_j\cos(2\lambda_{k+1}x_{k+1})\partial_k\phi_k)\,ds.
    \end{aligned}
    $$
    As before, we deduce that
    $$
    |II|\le C\lambda_{k+1}\lambda_k^{\alpha_k}e^{-\lambda_{k+1}^2t/2}.
    $$
    Since $t\ge\lambda_{k-1}^{-2}$, we have
    $$
    |II|\le C(\frac{m_k}e)^{m_k}\lambda_{k+1}^{1-2m_k}\lambda_k^{\alpha_k}\lambda_{k-1}^{2m_k}\le C\lambda_{k-1}^{\alpha_k-1}
    $$
    by \eqref{st2}. Moreover, when $t\ge1$, we simply have
    $$
    |II|\le C\lambda_{k+1}^{-1}\lambda_k^{\alpha_k}e^{-\lambda_{k+1}^2t/4}\le Ce^{-\lambda_k^2t/4}.
    $$

    To estimate $I$, we expand $\partial_j\partial_k\phi_k$ as the combination of product of $\sin(\lambda_ix_i)$ and $\cos(\lambda_ix_i)$ as before. Applying Lemma \ref{ece}, we obtain
    $$
    |I|\le C\lambda_k^{\alpha_k-1}\int_0^tK_{k+1}(s)e^{-\lambda_k^2(t-s)/2}\sum_{1\le m,n\le k}\lambda_m\lambda_n\,ds\le C\lambda_k^{\alpha_k+1}e^{-\lambda_k^2t/2}
    $$
    for $t\ge\lambda_{k-1}^{-2}$. Therefore,
    $$
    |I|\le C(\frac{m_k}e)^{m_k}\lambda_k^{\alpha_k+1-2m_k}\lambda_{k-1}^{2m_k}\le C\lambda_{k-1}^{\alpha_k-1}
    $$
    because of \eqref{st3}.
    
    For $t\ge1$, we further have $|I|\le C\lambda_k^{\alpha_k-1}e^{-\lambda_k^2t/4}\le Ce^{-\lambda_k^2t/4}$. We conclude the estimate for $|\nabla\rho_k|$. Since $\rho_k$ solves a divergence-form equation with zero initial data, it has zero spatial mean for every $t$. Therefore,
    $$
    \|\rho_k(t)\|_{L^\infty}\le C\|\nabla\rho_k(t)\|_{L^\infty}
    $$
    and this completes the proof of (iii).
\end{proof}

\section{Estimate for the Principal Part}
Define the principal part of the solution
$$
\varrho=\sum_k\rho_k.
$$

\begin{proposition}\label{principal-bounds}
    $$
    \li{\varrho(t)}\le Ct^{-\alpha_k/2}\qquad \text{for all }\lambda_{k+1}^{-2}\le t\le\lambda_k^{-2},
    $$
    and
    $$
    \li{\varrho(\lambda_k^{-2})}\ge C\lambda_k^{\alpha_k}.
    $$
\end{proposition}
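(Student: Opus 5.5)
For $t\in[\lambda_{k+1}^{-2},\lambda_k^{-2}]$ we split $\varrho(t)=\sum_j\rho_j(t)$ into three groups according to how $t$ sits relative to the time scales of each building block. We treat the indices $j\ge k+1$, the index $j=k$ together with $j\ge k+1$ via the bound on $\rho_k$ (note that $\rho_k$ is forced on $J_{k+1}\subset[\lambda_{k+1}^{-2},2\lambda_{k+1}^{-2}]$ and is then essentially $\bar\rho_k$), and the indices $j\le k-1$, whose forcing has not yet been activated except in its initial layer.

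\textbf{The indices $j\ge k+2$ (high modes).} Here $t\ge\lambda_{k+1}^{-2}\ge\lambda_{j-1}^{-2}$, so Proposition \ref{key}(iii) gives $\li{\rho_j(t)}\le C\lambda_{j-1}^{\alpha_j-1}$ when $t\le1$. The better route is the decay estimate used in its proof: $|I|+|II|\le C\lambda_j^{\alpha_j+1}e^{-\lambda_j^2t/2}$. Since $t\ge\lambda_{k+1}^{-2}$ and $\lambda_j\ge\lambda_{k+2}$, these terms are superexponentially small, and their sum over $j\ge k+2$ is $O(1)$.

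\textbf{The indices $j=k+1$ and $j=k$.} For $j=k+1$ we have $t\ge\lambda_{k+1}^{-2}$, so we write $\rho_{k+1}=\bar\rho_{k+1}+(\rho_{k+1}-\bar\rho_{k+1})$. By Proposition \ref{key}(ii), the error satisfies $\li{\rho_{k+1}-\bar\rho_{k+1}}\le C\lambda_k\lambda_{k+1}^{\alpha_{k+1}-1}$, which is small. The main term obeys $\li{\bar\rho_{k+1}}\le\lambda_k^{1-\alpha_k}\lambda_{k+1}^{\alpha_{k+1}}e^{-\lambda_{k+1}^2t}$. Maximizing over $t$ with $t^{\alpha_k/2}$ in front and using $e^{-x}x^{a}\le C$ together with \eqref{st5}, we get $\le Ct^{-\alpha_k/2}$; exactly at $t\approx\lambda_{k+1}^{-2}$ the exponent mismatch is controlled by \eqref{st5}, because $\lambda_{k+1}^{\alpha_{k+1}}\lambda_k^{1-\alpha_k}\le\lambda_{k+1}^{\alpha_k}$ since $\alpha_{k+1}<\alpha_k$ and $\lambda_k<\lambda_{k+1}$.

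For $j=k$, the forcing lives on $J_{k+1}$, and for $t$ in $[\lambda_{k+1}^{-2},2\lambda_{k+1}^{-2}]$ Proposition \ref{key}(i) gives $\li{\rho_k}\le C\lambda_k^{\alpha_k}\le Ct^{-\alpha_k/2}$. After $J_{k+1}$, Proposition \ref{key}(ii) with $k$ in place of $k$ (valid since $t\ge\lambda_k^{-2}$ only at the endpoint, so instead we use the formula with $I$ and $II$) shows $\rho_k\approx e^{t\Delta}\partial_k\phi_k$. Its $L^\infty$ norm is bounded by $\li{\partial_k\phi_k}\le 2\lambda_k^{\alpha_k}$ via contractivity, and because $t\le\lambda_k^{-2}$ this is $\le Ct^{-\alpha_k/2}$.

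\textbf{The indices $j\le k-1$ (low modes).} Since $t\le\lambda_k^{-2}\le\lambda_j^{-2}$ lies before or at the start of their activation, Proposition \ref{key}(iii) does not apply directly; we bound instead by the forcing formula, $\li{\rho_j}\le C\lambda_j^{\alpha_j}$ (the same contractivity argument as in Proposition \ref{key}(i)). Hence $\sum_{j\le k-1}\lambda_j^{\alpha_j}\le C\lambda_{k-1}^{\alpha_{k-1}}$, thanks to the superexponential growth of $\lambda_j$. By \eqref{st4}-type comparisons, $\lambda_{k-1}^{\alpha_{k-1}}\le\lambda_k^{\alpha_k}\le t^{-\alpha_k/2}$, since $\lambda_k^{\alpha_k}/\lambda_{k-1}^{\alpha_{k-1}}\to\infty$, as $\alpha_k s_{k-1}>\alpha_{k-1}$ follows from the definitions.

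\textbf{The lower bound.} At $t=\lambda_k^{-2}$ we write $\varrho=\bar\rho_k+(\rho_k-\bar\rho_k)+\sum_{j\ne k}\rho_j$. Testing at a point where $|\cos(\lambda_kx_k)\phi_{k-1}|$ is maximal, $|\bar\rho_k|= e^{-1}\lambda_{k-1}^{1-\alpha_{k-1}}\lambda_k^{\alpha_k}\lambda_{k-1}^{\alpha_{k-1}-1}=e^{-1}\lambda_k^{\alpha_k}$. All the other contributions, bounded as above, are $o(\lambda_k^{\alpha_k})$: Proposition \ref{key}(ii) bounds $\rho_k-\bar\rho_k$, the high modes are tiny, and the low modes contribute $O(\lambda_{k-1}^{\alpha_{k-1}})$. \emph{The main obstacle} is this last comparison, namely guaranteeing that the low-mode sum is genuinely dominated by $\lambda_k^{\alpha_k}$ with a constant independent of $k$. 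We expect this to reduce to \eqref{st4}/\eqref{st5} once $\lambda_1$ is large.
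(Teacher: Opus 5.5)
Your decomposition matches the paper's. You bound $\rho_k$ by contractivity of its Duhamel formula, split $\rho_{k+1}=\bar\rho_{k+1}+(\rho_{k+1}-\bar\rho_{k+1})$ via Proposition~\ref{key}(ii), and handle the tail $j\ge k+2$ by Proposition~\ref{key}(iii). However, the $j=k+1$ step fails as written.

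\textbf{The bound on $\bar\rho_{k+1}$.} You estimate $\|\bar\rho_{k+1}(t)\|_{L^\infty}$ by $\lambda_k^{1-\alpha_k}\lambda_{k+1}^{\alpha_{k+1}}e^{-\lambda_{k+1}^2t}$, which drops the amplitude $\|\phi_k\|_{L^\infty}\le\lambda_k^{\alpha_k-1}$. You then assert $\lambda_{k+1}^{\alpha_{k+1}}\lambda_k^{1-\alpha_k}\le\lambda_{k+1}^{\alpha_k}$. This is equivalent to $(1-\alpha_k)\log\lambda_k\le(\alpha_k-\alpha_{k+1})\log\lambda_{k+1}$. It is false for large $k$: $1-\alpha_k\to1$, while $\alpha_k-\alpha_{k+1}=O(\alpha_k/k)$ and $\log\lambda_{k+1}\le2\log\lambda_k$. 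Also, \eqref{st5} is a corrector condition and is irrelevant here. Keeping the amplitude makes the bound immediate: $\|\bar\rho_{k+1}(t)\|_{L^\infty}\le\lambda_{k+1}^{\alpha_{k+1}}e^{-\lambda_{k+1}^2t}\le Ct^{-\alpha_{k+1}/2}\le Ct^{-\alpha_k/2}$.

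\textbf{The error term, which is the real crux.} You call $C\lambda_k\lambda_{k+1}^{\alpha_{k+1}-1}$ ``small'' without checking anything. It is actually unbounded: its exponent in $\lambda_k$ is $1-s_k(1-\alpha_{k+1})=-\frac{3}{k+2}+s_k\alpha_{k+1}$, which is positive for large $k$ since $\alpha_{k+1}\sim k^{-1/2}$. What is needed, and what the paper checks, is $\lambda_k^{1-\alpha_k}\lambda_{k+1}^{\alpha_{k+1}-1}\to0$. This amounts to $1-\alpha_k-s_k(1-\alpha_{k+1})=-(\alpha_k-\alpha_{k+1})-\frac{3}{k+2}(1-\alpha_{k+1})<0$, which with $\log\lambda_k\sim k^3$ gives decay like $e^{-ck^2}$. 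This is the one place where the relation between $s_k$ and $\alpha_k$ enters. It is also what the lower bound needs to get $\rho_{k+1}(\lambda_k^{-2})=o(\lambda_k^{\alpha_k})$, together with $\|\bar\rho_{k+1}(\lambda_k^{-2})\|_{L^\infty}\le\lambda_{k+1}^{\alpha_{k+1}}e^{-(\lambda_{k+1}/\lambda_k)^2}$. Your lower-bound paragraph never treats $\rho_{k+1}$: ``bounded as above'' only gives $O(\lambda_k^{\alpha_k})$ there, the same order as the main term.

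\textbf{The low modes vanish.} The ``main obstacle'' you single out does not exist. For $j\le k-1$ and $t\le\lambda_k^{-2}$, $\rho_j(t)\equiv0$. This is because $\rho_j$ is generated only by $b_{j+1}$, whose time support $J_{j+1}=[\lambda_{j+1}^{-2},2\lambda_{j+1}^{-2}]$ lies in $[\lambda_k^{-2},\infty)$, and $\eta$ vanishes at the left endpoint. So on $[\lambda_{k+1}^{-2},\lambda_k^{-2}]$ one has $\varrho=\rho_k+\rho_{k+1}+\sum_{j\ge k+2}\rho_j$, which is what the paper uses. Your detour for $j=k$ through $I$, $II$ is likewise unnecessary: the Duhamel formula gives $\|\rho_k(t)\|_{L^\infty}\le2\|\partial_k\phi_k\|_{L^\infty}$ for every $t$.

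\textbf{The lower bound on $\bar\rho_k$.} At $t=\lambda_k^{-2}$ you evaluate $\bar\rho_k$ at a point where the trigonometric product has modulus $1$. Coordinate indices are read mod $d$, so for $k>d$ several factors share a variable. For example, $\sin(\lambda_1x_1)\sin(\lambda_{1+d}x_1)$ vanishes wherever $|\sin(\lambda_1x_1)|=1$ if $\lambda_{1+d}/\lambda_1$ is even. One only gets $\|\cos(\lambda_kx_k)\prod_{i<k}\sin(\lambda_ix_i)\|_{L^\infty}\ge c>0$, by a nested-interval argument using super-lacunarity, as the paper asserts. This affects only the constant.
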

\begin{proof}
    Since $t\le\lambda_k^{-2}$, $\rho_j(t)=0$ for every $j\le k-1$. We use the definition of $\rho_k$, the support of $b_{k+1}$, the $L^\infty$-contractivity of the heat semigroup, and Proposition \ref{phi}, to obtain
    $$
    \begin{aligned}
        &\li{\rho_k(t)}\le \int_{J_{k+1}}\|\dv(b_{k+1}\bar\rho_{k+1})(s)\|_{L^\infty}\,ds\\
        \le&C\|\partial_k\phi_k\|_{L^\infty}\le C\lambda_k^{\alpha_k}\le Ct^{-\alpha_k/2}.
    \end{aligned}
    $$

Let $\varepsilon_0:=1-\alpha_2>0$. Since $\alpha_j\le\alpha_2$ for $j\ge2$, Proposition~\ref{key} (iii) and lacunarity imply
\[
\sum_{j\ge k+2}\|\rho_j(t)\|_{L^\infty} \le
C\sum_{j\ge k+2}\lambda_{j-1}^{-\varepsilon_0} \le C
\le Ct^{-\alpha_k/2},
\]
because $t\le1$. Then we estimate $\rho_{k+1}$. Since
$$
1-\alpha_k-s_k(1-\alpha_{k+1})=-\frac3{k+2}(1-\alpha_{k+1})-s_k(\alpha_k-\alpha_{k+1})<-\frac{3}{2(k+2)},
$$
we deduce that $\lambda_k^{1-\alpha_k}\lambda_{k+1}^{\alpha_{k+1}-1}\ll1$, and hence by Proposition \ref{key}
\[
\|\rho_{k+1}(t)-\bar\rho_{k+1}(t)\|_{L^\infty} \le C\lambda_k\lambda_{k+1}^{\alpha_{k+1}-1}
    \ll C\lambda_k^{\alpha_k} \le Ct^{-\alpha_k/2}.
\]
Together with
\[
\|\bar\rho_{k+1}(t)\|_{L^\infty} \le C\lambda_{k+1}^{\alpha_{k+1}}e^{-\lambda_{k+1}^2t}
    \le Ct^{-\alpha_{k+1}/2} \ll Ct^{-\alpha_k/2}
\]
for $\lambda_{k+1}^{-2}\le t\le\lambda_k^{-2}$, we see that $\li{\rho_{k+1}(t)}$ is much smaller than $t^{-\alpha_k/2}$.

Combining the direct estimate for $\rho_k$ above with the estimates for
$\rho_{k+1}$ and the tail, we obtain
$$
\begin{aligned}
\li{\varrho(t)}
&\le \li{\bar\rho_k(t)} + \li{\bar\rho_k(t)-\rho_k(t)} +
\li{\rho_{k+1}(t)} + \sum_{j\ge k+2}\li{\rho_j(t)}\\
&\le Ct^{-\alpha_k/2}.
\end{aligned}
$$

    We then estimate the lower bound.

    By Proposition \ref{key}, we have
    $$
    \li{\rho_k(\lambda_k^{-2})-\bar\rho_k(\lambda_k^{-2})}\le C\lambda_{k-1}\lambda_k^{\alpha_k-1}
    $$
    which is much smaller than $\lambda_k^{\alpha_k}$.
    
    Also, after increasing $\lambda_1$ if necessary, by super-lacunarity,
$$
\|
\cos(\lambda_kx_k)\prod_{i=1}^{k-1}\sin(\lambda_ix_i) \|_{L^\infty}
\ge c
$$
for some constant $c>0$. Then
    $$
    \li{\bar\rho_k(\lambda_k^{-2})}\ge C\lambda_k^{\alpha_k}.
    $$

    Finally,
    $$
    \begin{aligned}
        &\li{\varrho(\lambda_k^{-2})}\ge\li{\bar\rho_k(\lambda_k^{-2})}-\li{\bar\rho_k(\lambda_k^{-2})-\rho_k(\lambda_k^{-2})}\\
        &-\li{\rho_{k+1}(\lambda_k^{-2})}-\sum_{j>k}\li{\rho_j(\lambda_k^{-2})}\ge C\lambda_k^{\alpha_k}.
    \end{aligned}
    $$
    
\end{proof}

\begin{corollary}\label{principal-lp}
    The principal part satisfies
    $$
    \varrho\in\bigcap_{1\le p<\infty}L^p((0,\infty);L^\infty(\tor)).
    $$
\end{corollary}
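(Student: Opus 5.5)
The corollary should follow from the pointwise-in-time bounds of Proposition~\ref{principal-bounds} for small times, together with the decay in Proposition~\ref{key}(iii) for large times. I would split $(0,\infty)$ into $(0,1]$ and $[1,\infty)$.

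\emph{Large times.} For $t\ge1$, Proposition~\ref{key}(iii) gives $\li{\rho_k(t)}\le Ce^{-\lambda_k^2t/4}$ for every $k$. Summing over $k$ and using lacunarity,
\[
\li{\varrho(t)}\le C\sum_k e^{-\lambda_k^2t/4}\le Ce^{-\lambda_1^2t/4},
\]
which lies in $L^p(1,\infty)$ for every $p$. Before this, I would check that $\sum_k\rho_k$ converges in $L^\infty$ at each fixed $t>0$. The tail estimate $\sum_{j\ge k+2}\li{\rho_j(t)}\le C\sum_j\lambda_{j-1}^{-\varepsilon_0}$, already used in the proof of Proposition~\ref{principal-bounds}, gives this.

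\emph{Small times.} On each interval $[\lambda_{k+1}^{-2},\lambda_k^{-2}]$ we have $\li{\varrho(t)}\le Ct^{-\alpha_k/2}$ with a constant $C$ independent of $k$. The key point is that $\alpha_k\to0$, since $\alpha_k\sim k^{-1/2}$. Fix $p<\infty$ and choose $k_0$ so that $p\alpha_k/2\le 1/2$ for all $k\ge k_0$. Then for $t\le\lambda_{k_0}^{-2}$ each interval satisfies $t^{-p\alpha_k/2}\le t^{-1/2}$ (using $t\le1$). Hence
\[
\int_0^{\lambda_{k_0}^{-2}}\li{\varrho(t)}^p\,dt\le C^p\int_0^1t^{-1/2}\,dt<\infty.
\]
The remaining finitely many intervals, up to $t=1$ (together with $[\lambda_1^{-2},1]$, where Proposition~\ref{key}(iii) gives a uniform bound), lie in a compact subset of $(0,\infty)$. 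There $\varrho$ is bounded, with at worst the factor $t^{-\alpha_k/2}\le\lambda_{k+1}^{\alpha_k}$, so these contribute a finite amount.

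\emph{Main subtlety.} The only delicate point is the uniformity of the constant $C$ in Proposition~\ref{principal-bounds} with respect to $k$. If $C$ grew with $k$, summing over infinitely many intervals could fail. Inspecting that proof, each ingredient is uniform in $k$: contractivity of the heat semigroup, Proposition~\ref{phi}, Proposition~\ref{key}, and the summable tail. Even a mild growth such as $C^p$ times a polynomial in $k$ would be harmless. Indeed, $t^{-p\alpha_k/2}$ with $\alpha_k\to0$ yields an interval contribution of order $\lambda_k^{-2(1-p\alpha_k/2)}$, which is super-geometrically small, and this absorbs any such factor.
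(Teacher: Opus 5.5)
Your proposal is correct and follows essentially the same route as the paper. You use $\alpha_k\to0$ to pick an index beyond which Proposition~\ref{principal-bounds} dominates $\|\varrho(t)\|_{L^\infty}^p$ by an integrable power of $t$ near $0$, treat the finitely many earlier intervals as lying away from $t=0$, and use the exponential decay from Proposition~\ref{key}(iii) for $t\ge1$. Your extra checks on convergence of the series at fixed $t$ and on the uniformity of the constant in $k$ are harmless additions that the paper leaves implicit.
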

\begin{proof}
    Fix $p<\infty$. Since $\alpha_k\to0$, there exists $i_0$ such that $p\alpha_{i_0}<2$ and hence $p\alpha_i<2$ for every $i\ge i_0$. On each later block interval $[\lambda_{i+1}^{-2},\lambda_i^{-2}]$, Proposition \ref{principal-bounds} implies
    $$
    \|\varrho(t)\|_{L^\infty}^p\le Ct^{-p\alpha_i/2}\le Ct^{-p\alpha_{i_0}/2}.
    $$
    Therefore
    $$
    \sum_{i\ge i_0}\int_{\lambda_{i+1}^{-2}}^{\lambda_i^{-2}}\|\varrho(t)\|_{L^\infty}^p\,dt
    \le C\int_0^{\lambda_{i_0}^{-2}} t^{-p\alpha_{i_0}/2}\,dt<\infty.
    $$
    The finitely many earlier block intervals also contribute finitely because they stay away from $t=0$.
    Hence $\varrho\big|_{t\le 1}\in L^p((0,1);L^\infty(\tor))$ for every finite $p$. Thanks to the exponential decay of $\rho_k$ for $t\ge1$, as is given by Proposition \ref{key}(iii), we see $\varrho\big|_{t\ge1}\in L^p((1,\infty);L^\infty(\tor))$.
\end{proof}

\section{Construction of Corrector}
By definition, $\rho_k$ satisfies
$$
\partial_t\rho_k+\dv(b_{k+1}\bar\rho_{k+1})-\Delta\rho_k=0.
$$

Formally summing in $k$, we obtain
\begin{equation}\label{fad}
\begin{aligned}
\partial_t\varrho+b\cdot\nabla\varrho-\Delta\varrho&=\sum_{j,k}\dv(b_k\rho_j)-\sum_k\dv(b_k\bar\rho_k)\\
&=\sum_kb_k\cdot\nabla\Bigl(\sum_{j>k}\rho_j+(\rho_k-\bar\rho_k)+\rho_{k-1}\Bigr),
\end{aligned}
\end{equation}
because $\rho_j$ is created only after $\lambda_{j+1}^{-2}$. We will later justify \eqref{fad} rigorously in $\mathcal D'([0,\infty)\times\tor)$.

We next introduce approximate correctors for the same equation, obtained from truncated approximation problems.

We define $\rho^n$ to be the solution of
$$
\begin{cases}
    \partial_t\rho^n+b\cdot\nabla\rho^n-\Delta\rho^n=\sum_{k\le n}f_k&\quad t\ge\lambda_n^{-2}\\
    \rho^n=0&\quad 0\le t\le\lambda_n^{-2}
\end{cases}
$$
where
$$
f_k=b_k\cdot\nabla(\sum_{j>k}\rho_j+(\rho_k-\bar\rho_k)+\rho_{k-1}).
$$
For later use, take a distribution $f$ such that
$$
\langle f,\varphi\rangle=-\int_0^\infty\int_{\tor}\sum_{k\ge1}b_k(\sum_{j>k}\rho_j+(\rho_k-\bar\rho_k)+\rho_{k-1})\cdot\nabla\varphi\,dx\,dt,
$$
and we formally have
$$
f=\sum_{k\ge1}f_k.
$$
We will show the convergence later. Since each $f_k$ is supported in $J_k$, the sum defining $f$ is locally finite in time on $(0,1]\times\tor$.

Away from $t=0$, each $\rho^n$ is globally defined, spatially smooth, and smooth in time. The next proposition gives the one-interval recurrence mechanism on the interior time intervals of a fixed block.

\begin{lemma}[Iteration]\label{boundary-iteration}
    Let $\rho$ be the solution to
    $$
    \begin{cases}
        \partial_t\rho+b_k\cdot\nabla\rho-\Delta\rho=f_k\quad&\lambda_k^{-2}\le t\le\lambda_{k-1}^{-2},\\
        \rho(\lambda_k^{-2})=\rho_{\mathrm{in}}
    \end{cases}
    $$
    with
    $$
    \mathrm{supp}(\mathcal F\rho_{\mathrm{in}})\subset\Omega_k+\lambda_{k+1}\mathbb Z^d, \qquad
    \|\rho_{\mathrm{in}}\|_{L^\infty}\le\lambda_k^{\theta_k}.
    $$
    Then, for every $\lambda_k^{-2}\le t\le\lambda_{k-1}^{-2}$,
    $$
    \mathrm{supp}(\mathcal F\rho(t))\subset\Omega_{k-1}+\lambda_k\mathbb Z^d,
    $$
    and
    $$
    \|\rho(t)\|_{L^\infty}\le Ct^{-\theta_k^+/2},
    \qquad
    \|\rho(\lambda_{k-1}^{-2})\|_{L^\infty}\le\lambda_{k-1}^{\theta_{k-1}}.
    $$
\end{lemma}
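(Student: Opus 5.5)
The plan is to write the solution through Duhamel's formula with the heat semigroup, splitting the interval $[\lambda_k^{-2},\lambda_{k-1}^{-2}]$ into the activation interval $J_k=[\lambda_k^{-2},2\lambda_k^{-2}]$, where $b_k$ and $f_k$ act, and the remaining interval $[2\lambda_k^{-2},\lambda_{k-1}^{-2}]$, where $\rho$ evolves by the pure heat flow. The solution is then decomposed as
\[
\rho=\rho^{\rm in}+\rho^{f},
\]
where $\rho^{\rm in}$ solves the homogeneous equation with datum $\rho_{\rm in}$ and $\rho^{f}$ solves the forced equation with zero datum.

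\emph{Fourier support.} The first step is the support claim. Since $\lambda_{i+1}=\lambda_i[\lambda_i^{s_i-1}]$, every $\lambda_i$ with $i\ge k+1$ is an integer multiple of $\lambda_{k+1}$, and $\lambda_{k+1}$ is a multiple of $\lambda_k$. Hence
\[
\Omega_k+\lambda_{k+1}\mathbb Z^d\subset\Omega_{k-1}+\lambda_k\mathbb Z^d ,
\]
and $\Omega_j+\lambda_{j+1}\mathbb Z^d\subset \Omega_{k}+\lambda_{k+1}\mathbb Z^d$ for $j\ge k$. By Proposition~\ref{key}(iv), all the $\rho_j$ entering $f_k$ (namely $j\ge k-1$), as well as $\bar\rho_k$, are supported in $\Omega_{k-1}+\lambda_k\mathbb Z^d$. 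Multiplying by $b_k\propto\cos(\lambda_kx_k)$ shifts frequencies by $\pm\lambda_ke_k$, which preserves the lattice class $\Omega_{k-1}+\lambda_k\mathbb Z^d$, and $e^{t\Delta}$ preserves Fourier support. Therefore the closed subspace of functions with spectrum in $\Omega_{k-1}+\lambda_k\mathbb Z^d$ is invariant under the Picard iteration of the Duhamel formula. Uniqueness of smooth solutions then gives the support claim for all $t$ in the interval.

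\emph{Forcing part.} On $J_k$ I use the maximum principle: $b_k$ is divergence free, so
\[
\|\rho^f(t)\|_{L^\infty}\le\int_{J_k}\|f_k\|_{L^\infty}\,ds .
\]
Since $b_k$ points in the $e_{k-1}$ direction, only $\partial_{k-1}$ enters $f_k$. The length $\lambda_k^{-2}$ of $J_k$ times the amplitude $\lambda_{k-1}^{\alpha_{k-1}-1}\lambda_k^{2-\alpha_k}$ of $b_k$ is combined with the following bounds.
\begin{itemize}
\item Proposition~\ref{key}(ii) for $\partial_{k-1}(\rho_k-\bar\rho_k)$ gives $\lambda_{k-1}^{\alpha_{k-1}+1}\lambda_k^{-1}$.
\item Proposition~\ref{key}(i) at index $k-1$ for $\partial_{k-1}\rho_{k-1}$ gives $\lambda_{k-1}^{2\alpha_{k-1}}\lambda_k^{-\alpha_k}$.
\item Proposition~\ref{key}(iii) for the tail $\sum_{j>k}\nabla\rho_j$ gives a smaller contribution.
\end{itemize}
By \eqref{st4}, the total is at most a fraction of $\lambda_{k-1}^{\theta_{k-1}}$. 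After $J_k$ only the heat flow acts, which is an $L^\infty$ contraction. This bounds $\rho^f$ by $\lambda_{k-1}^{\theta_{k-1}}$, which is at most $t^{-\theta_k^+/2}$ on the whole interval.

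\emph{Homogeneous part.} This is the main obstacle. On $J_k$ the maximum principle gives $\|\rho^{\rm in}\|_{L^\infty}\le\lambda_k^{\theta_k}$, and this is at most $t^{-\theta_k^+/2}$ as long as $t\le\lambda_k^{-2\theta_k/\theta_k^+}$. After that time, decay has to come from the heat flow, which requires controlling what $\rho^{\rm in}$ deposits into low frequencies. I split $\rho^{\rm in}(2\lambda_k^{-2})$ into a low part, with Fourier modes in $\Omega_{k-1}$ (the $k$-th frequency component cancelled), and a high part, all of whose modes have $|\xi|\gtrsim\lambda_k$.

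The datum $\rho_{\rm in}$ itself has no low modes, since every element of $\Omega_k+\lambda_{k+1}\mathbb Z^d$ has $k$-th component $\pm\lambda_k$ modulo $\lambda_{k+1}$. So the low part is generated only through the term $b_k\cdot\nabla\rho^{\rm in}=b_k\partial_{k-1}\rho^{\rm in}$ projected onto $\Omega_{k-1}$. I would estimate it by one Duhamel step. The point is that on the relevant modes $\partial_{k-1}$ costs only $\lambda_{k-1}$, because of the triangular structure: the $(k-1)$-th coordinate of these frequencies is a signed sum of $\lambda_i$ with $i\equiv k-1 \pmod d$ and $i\le k-1$. This yields the bound $\lambda_{k-1}^{\alpha_{k-1}}\lambda_k^{-\alpha_k+\theta_k}\le\lambda_{k-1}^{\theta_{k-1}}$ by \eqref{st5}. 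The delicate part is to do this projection at the level of Fourier coefficients, rather than commuting the Littlewood--Paley projection with $b_k$.

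For the high part, the heat semigroup gives a factor $e^{-c\lambda_k^2 t}$ on each mode. Using $e^{-x}\le (N/e)^N x^{-N}$ with $N=N_k$, and counting the $2^k$ modes as in Lemma~\ref{ece}, the high part is bounded by $t^{-\theta_{k-1}/2}$ for $t\ge\lambda_k^{-2\theta_k/\theta_k^+}$ by \eqref{st6}, and by $\lambda_{k-1}^{\theta_{k-1}}$ at $t=\lambda_{k-1}^{-2}$ by \eqref{st7}.

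\emph{Conclusion.} Summing the three contributions (forcing, low homogeneous part, high homogeneous part), with constants absorbed by taking $\lambda_1$ large, gives $\|\rho(t)\|_{L^\infty}\le Ct^{-\theta_k^+/2}$ on the whole interval and $\|\rho(\lambda_{k-1}^{-2})\|_{L^\infty}\le\lambda_{k-1}^{\theta_{k-1}}$.
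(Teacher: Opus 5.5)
\textbf{Gap: the low-sector $L^\infty$ bound.} Your architecture is the paper's, and everything outside this one step matches it: the forced/homogeneous split, invariance of $E_{k-1}=\Omega_{k-1}+\lambda_k\mathbb Z^d$, the maximum principle with \eqref{st4} for the forced part, and the low/high splitting of the homogeneous part closed by \eqref{st5}--\eqref{st7}. The gap is in the step you flag as delicate and leave unexecuted: bounding the low sector in $L^\infty$.

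Write $P_0$ for the sharp projection onto the $z=0$ sector $\Omega_{k-1}$. Working ``at the level of Fourier coefficients'' gives $|\widehat{P_0\rho^{\rm in}}(t,\xi)|\le 2\lambda_{k-1}\int_{J_k}\|b_k\rho^{\rm in}\|_{L^\infty}\,ds$ for each $\xi\in\Omega_{k-1}$. To reach $L^\infty$ you must then sum over $\#\Omega_{k-1}=2^{k-1}$ modes. In Lemma~\ref{ece} and Proposition~\ref{phi} the coefficients are explicit and of size $2^{-k}$, but nothing makes the coefficients of $\rho^{\rm in}$ small. So you lose $2^{k-1}$, and an $L^2$/Cauchy--Schwarz route still loses $2^{(k-1)/2}$.

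This loss cannot be absorbed. The gain in \eqref{st5} is only $\lambda_{k-1}^{-\frac{3k+4}{2(k+1)^2(k+2)}\alpha_{k-1}}\approx\exp(-c\,k^{1/2}\log\lambda_1)$, because $\log\lambda_{k-1}\sim k^3\log\lambda_1$ and $\alpha_{k-1}\sim k^{-1/2}$. This loses to $2^{-k}$ once $k\gtrsim(\log\lambda_1)^2$, however large $\lambda_1$ is. The $2^k$ in the high part is harmless only because \eqref{st6}--\eqref{st7} absorb it through the power $N_k\sim k$ of $\lambda_k^2t$.

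\textbf{The missing idea.} The Littlewood--Paley projection you want to avoid is exactly what makes this step loss-free, and it never has to be commuted with $b_k$.
\begin{itemize}
\item On functions with spectrum in $E_{k-1}$, $P_0$ coincides with a smooth multiplier $\mathbf P_{\le2\lambda_{k-1}}$ whose symbol is $1$ on $B(0,2\lambda_{k-1})$ and supported in $B(0,4\lambda_{k-1})$. This holds because the $z=0$ sector has $|\xi|<2\lambda_{k-1}$, while every other sector has $|\xi|>\lambda_k/2>4\lambda_{k-1}$.
\item This operator is bounded on $L^\infty$ uniformly in $k$. By Bernstein, $e^{\tau\Delta}\nabla\mathbf P_{\le2\lambda_{k-1}}$ costs at most $C\lambda_{k-1}$ on $L^\infty$.
\item Since $\mathrm{div}\,b_k=0$, you have $\mathbf P_{\le2\lambda_{k-1}}(b_k\cdot\nabla\rho^{\rm in})=\mathrm{div}\,\mathbf P_{\le2\lambda_{k-1}}(b_k\rho^{\rm in})$. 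So $w=\mathbf P_{\le2\lambda_{k-1}}\rho^{\rm in}$ solves the heat equation with divergence-form forcing and $w(\lambda_k^{-2})=0$.
\item Duhamel then gives $\|w(t)\|_{L^\infty}\le C\lambda_{k-1}\int_{J_k}\|b_k\|_{L^\infty}\|\rho^{\rm in}\|_{L^\infty}\,ds\le C\lambda_{k-1}^{\alpha_{k-1}}\lambda_k^{-\alpha_k+\theta_k}$, with $C$ independent of $k$. This is what \eqref{st5} can absorb.
\end{itemize}
With this replacement your argument becomes the paper's proof.

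\textbf{Minor point.} The claim that every element of $\Omega_k+\lambda_{k+1}\mathbb Z^d$ has $k$-th coordinate $\pm\lambda_k$ mod $\lambda_{k+1}$ is inaccurate for $k>d$, since that coordinate also carries the $\lambda_i$ with $i<k$, $i\equiv k \pmod d$. The correct statement is this: writing $\xi=\omega+\lambda_k z$ with $\omega\in\Omega_{k-1}$, one gets $z=\pm e_k+(\lambda_{k+1}/\lambda_k)z'\neq0$. That still gives $P_0\rho_{\rm in}=0$.
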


\begin{proof}
    Let $\rho=\rho^1+\rho^2$ with
    $$
    \begin{cases}
        \partial_t\rho^1+b_k\cdot\nabla\rho^1-\Delta\rho^1=f_k\quad&\lambda_k^{-2}\le t\le\lambda_{k-1}^{-2},\\
        \rho^1(\lambda_k^{-2})=0
    \end{cases}
    $$
    and
    $$
    \begin{cases}
        \partial_t\rho^2+b_k\cdot\nabla\rho^2-\Delta\rho^2=0\quad&\lambda_k^{-2}\le t\le\lambda_{k-1}^{-2},\\
        \rho^2(\lambda_k^{-2})=\rho_{\mathrm{in}}.
    \end{cases}
    $$

    We first estimate the forced part. By the maximum principle, Proposition \ref{key}(ii), and Proposition \ref{key}(i) applied to $\rho_{k-1}$,
    \begin{equation*}
    \begin{aligned}
        \|\rho^1(t)\|_{L^\infty}
        &\le\int_{J_k}\|b_k\cdot\nabla(\rho_k-\bar\rho_k)\|_{L^\infty}\,ds
        +\int_{J_k}\|b_k\cdot\nabla\rho_{k-1}\|_{L^\infty}\,ds\\
        &+\sum_{k<j}\int_{J_k}\|b_k\cdot\nabla\rho_j\|_{L^\infty}\,ds\\
        &\le C(\lambda_{k-1}^{\alpha_{k-1}+1}\lambda_k^{-1}
        +\lambda_{k-1}^{2\alpha_{k-1}}\lambda_k^{-\alpha_k}+\sum_{k<j}\lambda_k^{-\alpha_k}\lambda_{k-1}^{\alpha_{k-1}-1}\lambda_{j-1}^{\alpha_j-1})\\
        &\le C(\lambda_{k-1}^{\alpha_{k-1}+1}\lambda_k^{-1}
        +\lambda_{k-1}^{2\alpha_{k-1}}\lambda_k^{-\alpha_k}).
    \end{aligned}
    \end{equation*}
    The last inequality is because the contribution of $\sum_{k<j}b_k\cdot\nabla\rho_j$ is even smaller than $1$ and thus can be absorbed by other terms. Then by \eqref{st4},
    \begin{equation}\label{bi1}
        \|\rho^1(t)\|_{L^\infty}\le\frac13\lambda_{k-1}^{\theta_{k-1}}.
    \end{equation}
    Since $\theta_k^+>\theta_{k-1}$ and $t\le\lambda_{k-1}^{-2}$ on the time interval, this yields
    $$
    \|\rho^1(t)\|_{L^\infty}\le Ct^{-\theta_k^+/2}
    \qquad\text{and}\qquad
    \|\rho^1(\lambda_{k-1}^{-2})\|_{L^\infty}\le\frac13\lambda_{k-1}^{\theta_{k-1}}.
    $$

    Next we estimate the Fourier support. Let
    $$
    E_{k-1}:=\Omega_{k-1}+\lambda_k\mathbb Z^d.
    $$
    By Proposition \ref{key}(iv), the explicit formula for $\bar\rho_k$, and the expression of $b_k$, the force $b_k\cdot\nabla(\sum_{k<j}\rho_j+(\rho_k-\bar\rho_k)+\rho_{k-1})$ is supported in $E_{k-1}$. The initial support assumption also implies $\mathrm{supp}(\mathcal F\rho_{\mathrm{in}})\subset E_{k-1}$, because $\lambda_{k+1}\in\lambda_k\mathbb N$ and $\pm\lambda_ke_k\in\lambda_k\mathbb Z^d$. Moreover, $E_{k-1}$ is invariant under diffusion and under the operator $f\mapsto b_k\cdot\nabla f$: multiplying by $\cos(\lambda_kx_k)$ only shifts the Fourier support by $\pm\lambda_ke_k\in\lambda_k\mathbb Z^d$, while $\partial_{k-1}$ does not enlarge the support. Therefore
    $$
    \mathrm{supp}(\mathcal F\rho^m(t))\subset E_{k-1},\qquad m=1,2,
    $$
    and hence $\mathrm{supp}(\mathcal F\rho(t))\subset E_{k-1}$.

    We next estimate the homogeneous part $\rho^2$. Let $\mathbf P_{\le 2\lambda_{k-1}}$ be a smooth Fourier multiplier whose symbol is equal to $1$ on $B(0,2\lambda_{k-1})$ and supported in $B(0,4\lambda_{k-1})$, and set $\mathbf P_{>2\lambda_{k-1}}=I-\mathbf P_{\le 2\lambda_{k-1}}$. Since $\lambda_k>10\lambda_{k-1}$ and $\sum_{j=1}^{k-1}\lambda_j<2\lambda_{k-1}$, every frequency
    $$
    \xi=\epsilon_1\lambda_1e_1+\cdots+\epsilon_{k-1}\lambda_{k-1}e_{k-1}+\lambda_kz\in E_{k-1}
    $$
    either has $z=0$ and then $|\xi|<2\lambda_{k-1}$, or has $z\ne0$ and then
    $$
    |\xi|\ge\lambda_k|z|-\sum_{j=1}^{k-1}\lambda_j>\frac12\lambda_k>4\lambda_{k-1}.
    $$
    Thus the low and high sectors of $E_{k-1}$ are cleanly separated by this cutoff. 
    
    Let $w=\mathbf P_{\le 2\lambda_{k-1}}\rho^2$. Since $\nabla\cdot b_k=0$, we have
    $$
    \partial_tw-\Delta w=-\mathbf P_{\le 2\lambda_{k-1}}(b_k\cdot\nabla\rho^2)=-\dv\mathbf P_{\le 2\lambda_{k-1}}(b_k\rho^2)
    $$
    on $I_k$. Moreover, the initial support assumption places every frequency of $\rho_{\mathrm{in}}$ in the high sector, so
    $$
    w(\lambda_k^{-2})=\mathbf P_{\le 2\lambda_{k-1}}\rho_{\mathrm{in}}=0.
    $$
    
    By Duhamel's formula and the $L^\infty$-contractility of the heat semigroup,
    \begin{equation*}
    \begin{aligned}
        \|w(t)\|_{L^\infty}
        &\le C\int_{J_k}\lambda_{k-1}\|b_k(s)\rho^2(s)\|_{L^\infty}\,ds\\
        &\le C\lambda_{k-1}^{\alpha_{k-1}}\lambda_k^{-\alpha_k}\|\rho_{\mathrm{in}}\|_{L^\infty}\\
        &\le C\lambda_{k-1}^{\alpha_{k-1}}\lambda_k^{-\alpha_k+\theta_k}.
    \end{aligned}
    \end{equation*}
    By \eqref{st5},
    \begin{equation}\label{bi2}
        \|w(t)\|_{L^\infty}\le\frac13\lambda_{k-1}^{\theta_{k-1}}.
    \end{equation}
    Since $\theta_k^+>\theta_{k-1}$, this implies
    $$
    \|w(t)\|_{L^\infty}\le Ct^{-\theta_k^+/2}
    $$
    on $I_k$.

    Let $v=\mathbf P_{>2\lambda_{k-1}}\rho^2$ and
    $$
    t_k=\lambda_k^{-2\theta_k/\theta_k^+}>2\lambda_k^{-2}.
    $$
    Since $\mathbf P_{>2\lambda_{k-1}}$ is bounded on $L^\infty$ and $\rho^2$ satisfies the maximum principle,
    \begin{equation}\label{i3}
    \|v(t)\|_{L^\infty}\le C\|\rho^2(t)\|_{L^\infty}\le C\|\rho_{\mathrm{in}}\|_{L^\infty}\le C\lambda_k^{\theta_k}\le Ct^{-\theta_k^+/2}
    \end{equation}
    for $\lambda_k^{-2}\le t\le t_k$. On $[2\lambda_k^{-2},\lambda_{k-1}^{-2}]$, the function $v$ solves the heat equation because $b_k=0$. Hence, for $t\ge t_k$,
    $$
    \|v(t)\|_{L^\infty}\le\sum_{\xi}|\mathcal Fv(2\lambda_k^{-2})(\xi)|e^{- |\xi|^2(t-2\lambda_k^{-2})}.
    $$
    By \eqref{i3} and $t-2\lambda_k^{-2}\ge\frac12 t$ when $t\ge t_k$, we obtain
    $$
    \|v(t)\|_{L^\infty}\le\|v(2\lambda_k^{-2})\|_{L^\infty}\sum_{\xi}e^{- |\xi|^2t/2}\le\lambda_k^{\theta_k}\sum_{\xi}e^{- |\xi|^2t/2}.
    $$
    Because $\mathrm{supp}(\mathcal F\rho^2)\subset E_{k-1}$ and the cutoff separates the $z=0$ sector from the $z\ne0$ sector of $E_{k-1}$, the support of $v$ is contained in
    $$
    \bigcup_{z\in\mathbb Z^d\setminus\{0\}}z\lambda_k+\Omega_{k-1}.
    $$
    Therefore,
    $$
    \begin{aligned}
        \|v(t)\|_{L^\infty}
        &\le\lambda_k^{\theta_k}\sum_{z\in\mathbb Z^d\setminus\{0\}}\sum_{\xi\in\Omega_{k-1}}e^{-|\lambda_kz+\xi|^2t/2}\\
        &\le\lambda_k^{\theta_k}\sum_{z\in\mathbb Z^d\setminus\{0\}}2^{k-1}e^{-\lambda_k^2|z|^2t/8}\\
        &\le C(\frac {8N_k}e)^{N_k}\lambda_k^{\theta_k}2^{k-1}(\lambda_k^2t)^{-N_k}\sum_{z\in\mathbb Z^d\setminus\{0\}}|z|^{-2N_k},
    \end{aligned}
    $$
    where we used
    $$
    |\lambda_kz+\xi|\ge\frac12\lambda_k|z|
    $$
    because $\sum_{j=1}^{k-1}\lambda_j<\frac{\lambda_k}2$.

    By \eqref{st6},
    $$
    \|v(t)\|_{L^\infty}\le Ct^{-\theta_{k-1}/2}\le Ct^{-\theta_k^+/2}
    $$
    for $t_k\le t\le\lambda_{k-1}^{-2}$. At the final time $t=\lambda_{k-1}^{-2}$ we have $t-2\lambda_k^{-2}\ge\frac12\lambda_{k-1}^{-2}$, so the same summation argument gives
    \begin{equation}\label{bi4}
        \|v(\lambda_{k-1}^{-2})\|_{L^\infty}\le C(\frac{2N_k}e)^{N_k}\lambda_k^{\theta_k}2^k(\lambda_k^2\lambda_{k-1}^{-2})^{-N_k}\le\frac13\lambda_{k-1}^{\theta_{k-1}}
    \end{equation}
    by the parameter choice \eqref{st7}.

    Combining the bounds \eqref{bi1}\eqref{bi2}\eqref{i3}\eqref{bi4}, we conclude
    $$
    \|\rho(t)\|_{L^\infty}\le Ct^{-\theta_k^+/2}
    \qquad\text{and}\qquad
    \|\rho(\lambda_{k-1}^{-2})\|_{L^\infty}\le\lambda_{k-1}^{\theta_{k-1}}.
    $$
   
\end{proof}

The lemma above can now be applied to the recurrent part of the approximate correctors after splitting off the tail.

\begin{proposition}\label{cor.decay}
    For any $n$, we have $|\rho^n(t)|\le Ct^{-\theta_k^+/2}$ whenever $\lambda_k^{-2}\le t\le\lambda_{k-1}^{-2}$. Moreover,
    $$
    supp(\mathcal F\rho^n(\lambda_k^{-2}))\subset\Omega_k+\lambda_{k+1}\mathbb Z^d,
    $$
    for any $k$.
\end{proposition}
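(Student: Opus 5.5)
We argue by induction on the block index, running backwards in $k$ (forward in time), starting from the activation time $\lambda_n^{-2}$ of $\rho^n$. For $t\le\lambda_n^{-2}$ we have $\rho^n\equiv 0$, so both claims are trivial on every interval $[\lambda_k^{-2},\lambda_{k-1}^{-2}]$ with $k>n$. On such an interval the Fourier support claim at $\lambda_k^{-2}$ also holds trivially. The inductive statement we carry is the following: for each $k\le n$,
\[
\operatorname{supp}(\mathcal F\rho^n(\lambda_k^{-2}))\subset\Omega_k+\lambda_{k+1}\mathbb Z^d,
\qquad
\|\rho^n(\lambda_k^{-2})\|_{L^\infty}\le\lambda_k^{\theta_k}.
\]
At $k=n$ the datum is zero, so the hypothesis holds.

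For the inductive step, we observe that on $I_k=[\lambda_k^{-2},\lambda_{k-1}^{-2}]$ only $b_k$ can be nonzero, since $J_j\cap I_k=\emptyset$ for $j\neq k$ apart from endpoints. Likewise, among the forces $f_j$ with $j\le n$ only $f_k$ is active, because each $f_j$ is supported in $J_j$. Hence $\rho^n$ restricted to $I_k$ solves exactly the problem in Lemma~\ref{boundary-iteration}, with $\rho_{\mathrm{in}}=\rho^n(\lambda_k^{-2})$. The lemma then yields three conclusions:
\begin{itemize}
\item[(a)] the bound $\|\rho^n(t)\|_{L^\infty}\le Ct^{-\theta_k^+/2}$ on $I_k$;
\item[(b)] the estimate $\|\rho^n(\lambda_{k-1}^{-2})\|_{L^\infty}\le\lambda_{k-1}^{\theta_{k-1}}$;
\item[(c)] the support property $\operatorname{supp}\mathcal F\rho^n(\lambda_{k-1}^{-2})\subset\Omega_{k-1}+\lambda_k\mathbb Z^d$.
\end{itemize}
These are precisely the hypotheses at index $k-1$. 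Iterating down to $k=1$ covers all $t\le\lambda_0^{-2}=1$.

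For $t\ge1$ we use the convention $\lambda_0=1$ and $\Omega_0=\{0\}$. The support at time $1$ lies in $\lambda_1\mathbb Z^d$. On $[1,\infty)$ the field $b_1$ has already switched off, since $J_1\subset[\lambda_1^{-2},2\lambda_1^{-2}]$, so $\rho^n$ evolves by the heat equation with no forcing. The maximum principle then gives a bound by $\lambda_0^{\theta_0}=1\le Ct^{-\theta_1^+/2}$ only up to bounded times. We interpret the proposition as concerning $t\le1$, where the intervals $I_k$ exhaust $(0,1]$. If needed, for $t\ge1$ we add the trivial uniform bound from the maximum principle.

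The main point to check is the reduction to Lemma~\ref{boundary-iteration}, namely that no other drift or forcing term is active on $I_k$. This requires the disjointness of the $J_j$ and the fact that $f_k$ involves only the $\rho_j$ with the Fourier structure used in the lemma. The support statement is consistent with this, since it is the lemma's output. A subtle point is the base case $k=n$. There $\rho^n(\lambda_n^{-2})=0$ has empty Fourier support, so the hypothesis holds. However, $f_n$ may act partly on $J_n$, which begins exactly at $\lambda_n^{-2}$, and the lemma already accounts for this forcing.
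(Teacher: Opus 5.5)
Your proposal is correct and matches the paper's proof, which is the one-line remark that, starting from $\rho^n(\lambda_n^{-2})=0$, one applies Lemma~\ref{boundary-iteration} repeatedly on $I_n,I_{n-1},\dots,I_1$; your induction spells out the two hypotheses passed from block to block (the Fourier support and the constant-free bound $\|\rho^n(\lambda_k^{-2})\|_{L^\infty}\le\lambda_k^{\theta_k}$). Your discussion of $t\ge1$ is not needed, since the intervals $I_k$, $k\ge1$, cover only $(0,1]$, and that is all the statement asserts.
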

\begin{proof}
    Since $\rho^n(\lambda_n^{-2})=0$, this follows by repeatedly using Lemma \ref{boundary-iteration}.
\end{proof}

\begin{corollary}
    $\|\rho^n\|_{L^p(0,\infty;L^\infty(\tor))}\le C_p$ for all $1\le p<\infty$ and $n$.
\end{corollary}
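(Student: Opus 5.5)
The bound follows by integrating the pointwise-in-time estimate of Proposition \ref{cor.decay} over the dyadic-like blocks $I_k=[\lambda_k^{-2},\lambda_{k-1}^{-2}]$, exactly as in Corollary \ref{principal-lp}, and then treating large times separately. Since $\rho^n\equiv0$ on $[0,\lambda_n^{-2}]$, only the blocks $I_k$ with $k\le n$ contribute, and on each such block
\[
\|\rho^n(t)\|_{L^\infty}\le Ct^{-\theta_k^+/2},
\]
with $C$ independent of $n$ and $k$. The point is that this constant is uniform, which holds because Lemma \ref{boundary-iteration} is applied recursively with the same normalized hypothesis $\|\rho_{\rm in}\|_{L^\infty}\le\lambda_k^{\theta_k}$ at each step.

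Fix $1\le p<\infty$. Since $\theta_k^+\le\alpha_k\to0$, choose $k_0$ with $p\theta_{k}^+<1$ for all $k\ge k_0$; in fact $p\theta_k^+\le p\alpha_{k_0}$ for $k\ge k_0$ by monotonicity of $\alpha_k$. Then
\[
\sum_{k_0\le k\le n}\int_{I_k}\|\rho^n\|_{L^\infty}^p\,dt\le C^p\int_0^{\lambda_{k_0-1}^{-2}}t^{-p\alpha_{k_0}/2}\,dt\le C_p.
\]
The finitely many blocks $I_k$ with $k<k_0$ lie in $[\lambda_{k_0-1}^{-2},1]$, where $t^{-\theta_k^+/2}\le\lambda_{k_0-1}^{\theta_k^+}$ is bounded independently of $n$. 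Hence $\int_0^1\|\rho^n\|_{L^\infty}^p\,dt\le C_p$ uniformly in $n$.

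For $t\ge1=\lambda_0^{-2}$ the drift $b$ vanishes, because every activation interval $J_k$ lies in $(0,1)$; likewise every $f_k$, which is supported in $J_k$, vanishes. So $\rho^n$ solves the heat equation there. Moreover, $\rho^n$ has zero spatial mean, since the equation is in divergence form with zero data and $f_k=\dv(b_k\,\cdot)$. The mean-zero heat flow then gives
\[
\|\rho^n(t)\|_{L^\infty}\le C e^{-c(t-1)}\|\rho^n(1)\|_{L^\infty},
\]
and $\|\rho^n(1)\|_{L^\infty}$ is bounded uniformly in $n$ by the block estimate at $k=1$. To get exponential decay in $L^\infty$ rather than $L^2$, I would combine $L^2$ decay on the torus with smoothing from $L^2$ to $L^\infty$ over a unit time step. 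This yields a uniform $L^p(1,\infty;L^\infty)$ bound.

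The only delicate point is the uniformity of the constant across the iteration. The third paragraph of the proof of Lemma \ref{boundary-iteration} produces $\|\rho(t)\|\le Ct^{-\theta_k^+/2}$ with a $k$-independent $C$, using $\theta_k^+>\theta_{k-1}$ and the endpoint bound $\lambda_{k-1}^{\theta_{k-1}}$. I would double-check that no factor accumulates across the steps, which holds because the endpoint bound is re-normalized without a constant at each step.
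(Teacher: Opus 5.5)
Your proposal is correct and follows the paper's proof. For $t\le1$ you integrate the uniform block bound of Proposition~\ref{cor.decay} over the intervals $I_k$, exactly as in Corollary~\ref{principal-lp}; for $t\ge1$ you use that $\rho^n$ solves the mean-zero heat equation. The only difference is cosmetic: the paper gets the large-time exponential decay from the Fourier support of $\rho^n$ in $\lambda_1\mathbb Z^d\setminus\{0\}$, by summing $|\hat\rho^n(z)|e^{-|z|^2 t}$, while you use the $L^2$ spectral gap plus $L^2\to L^\infty$ smoothing, which needs only the zero mean.
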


\begin{proof}
    The estimate for $t\le 1$ is similar to the proof of Corollary \ref{principal-lp}. For $t\ge1$, we observe that $\rho^n$ solves the heat equation with $\li{\rho^n(1)}$ bounded and Fourier support in $\lambda_1\mathbb Z^d\setminus\{0\}$. Then
    $$
    \begin{aligned}
        \li{\rho^n(t)}&\le\sum_{z\in\lambda_1\mathbb Z^d\setminus\{0\}}|\hat\rho(z)(t)|\le\sum_{z\in\lambda_1\mathbb Z^d\setminus\{0\}}|\hat\rho^n(z)(2\lambda_1^{-2})|e^{-|z|^2(t-2\lambda_1^{-2})}\\
        &\le \li{\rho^n(2\lambda_1^{-2})}\sum_{z\in\lambda_1\mathbb Z^d\setminus\{0\}}e^{-|z|^2(t-2\lambda_1^{-2})},
    \end{aligned}
    $$
    which is finite and decays exponentially for $t\ge1$.    
\end{proof}

\section{Compactness and nontriviality}
We now pass to the limit in the principal part and in the approximate correctors.

\begin{proposition}\label{weak.sol.p}
$\varrho$ solves
\[
\partial_t\varrho+b\cdot\nabla\varrho-\Delta\varrho=f
\]
with zero initial data in the sense of distributions.
\end{proposition}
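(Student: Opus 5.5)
We will verify the weak formulation directly. For a test function $\varphi\in C_c^\infty([0,\infty)\times\tor)$ the goal is
\[
\int_0^\infty\!\int_{\tor}\varrho\bigl(\partial_t\varphi+b\cdot\nabla\varphi+\Delta\varphi\bigr)\,dx\,dt=\langle f,\varphi\rangle .
\]
The plan is to establish this first for finitely many $k$ and then pass to the limit.

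\emph{Step 1: integrability.} The left-hand side must be well defined. By Corollary \ref{principal-lp}, $\varrho\in L^2_tL^\infty_x$. By Proposition \ref{b-lq}, $b\in L^q_tL^\infty_x$ for every $q<2$. Hölder's inequality then gives $b\varrho\in L^1_{t,x}$, and $\varrho\in L^1_{\rm loc}$ is clear.

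For the right-hand side we show absolute convergence of $\sum_k\int b_k(\sum_{j>k}\rho_j+(\rho_k-\bar\rho_k)+\rho_{k-1})\cdot\nabla\varphi$. On $J_k$ we have $\|b_k\|_{L^\infty}\le C\lambda_{k-1}^{\alpha_{k-1}-1}\lambda_k^{2-\alpha_k}$ and $|J_k|=\lambda_k^{-2}$. We combine this with the following bounds from Proposition \ref{key}:
\begin{itemize}
\item (ii) gives $\|\rho_k-\bar\rho_k\|_{L^\infty}\le C\lambda_{k-1}\lambda_k^{\alpha_k-1}$;
\item (i) gives $\|\rho_{k-1}\|_{L^\infty}\le C\lambda_{k-1}^{\alpha_{k-1}}$ on $J_k$;
\item (iii) gives $\sum_{j>k}\|\rho_j\|_{L^\infty}\le C$.
\end{itemize}
The $k$-th term is then bounded by $C(\lambda_{k-1}^{\alpha_{k-1}}\lambda_k^{-1}+\lambda_{k-1}^{2\alpha_{k-1}-1}\lambda_k^{-\alpha_k}+\lambda_{k-1}^{\alpha_{k-1}-1}\lambda_k^{-\alpha_k})\|\nabla\varphi\|_{L^\infty}$, and the exponent bookkeeping of \eqref{st4} makes this summable. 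So $f$ is a well-defined distribution, and indeed a finite measure in time.

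\emph{Step 2: each block.} Each $\rho_k$ is the Duhamel solution of $\partial_t\rho_k-\Delta\rho_k=-\dv(b_{k+1}\bar\rho_{k+1})$ with zero data. It is smooth and vanishes for $t\le\lambda_{k+1}^{-2}$, so integration by parts gives
\[
\int\!\!\int\rho_k(\partial_t\varphi+\Delta\varphi)=-\int\!\!\int b_{k+1}\bar\rho_{k+1}\cdot\nabla\varphi,
\]
with no boundary term at $t=0$.

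Fix a truncation parameter $K$ and sum over $k\le K$. Add $\int\!\!\int\varrho_K\, b\cdot\nabla\varphi$ with $\varrho_K=\sum_{k\le K}\rho_k$, and regroup $\sum_{k\le K}\sum_m b_m\rho_k$ by using the disjointness of the $J_m$ and the fact that $\rho_j$ vanishes on $J_m$ when $m>j+1$. This reproduces the algebra \eqref{fad} at the level of the weak form, namely
\[
\sum_{m\le K}\int\!\!\int b_m\Bigl(\sum_{K\ge j>m}\rho_j+(\rho_m-\bar\rho_m)+\rho_{m-1}\Bigr)\cdot\nabla\varphi,
\]
plus boundary terms at index $K$. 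These boundary terms are the contributions of $b_{K+1}$ (times $\rho_K$, and the $-b_{K+1}\bar\rho_{K+1}$ piece). Because $\varphi$ is only compactly supported in $[0,\infty)$, not away from $0$, these terms live on $J_{K+1}\subset(0,2\lambda_{K+1}^{-2}]$ and must be shown to vanish as $K\to\infty$.

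\emph{Step 3: limit $K\to\infty$ (the main obstacle).} Three things have to be controlled.
\begin{itemize}
\item $\varrho_K\to\varrho$ in $L^2_tL^\infty_x$ on the support of $\varphi$, using the uniform bounds of Proposition \ref{principal-bounds} and dominated convergence, since $\rho_k=0$ for small $t$ and each tail piece is pointwise eventually zero. Combined with $b\in L^{q}_tL^\infty_x$ this passes the left side to the limit.
\item The truncated tail sums $\sum_{j>K}$ inside the forcing terms vanish by the summability in Step 1.
\item The delicate term is $\int_{J_{K+1}}b_{K+1}(\rho_K-\bar\rho_{K+1})\cdot\nabla\varphi$ and its relatives. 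Naively $\|b_{K+1}\bar\rho_{K+1}\|_{L^1_tL^\infty_x}\sim\lambda_K^{\alpha_K-1}\lambda_{K+1}^{2-\alpha_{K+1}}\lambda_K^{1-\alpha_K}\lambda_{K+1}^{\alpha_{K+1}}\lambda_{K+1}^{-2}=O(1)$, which does not decay.
\end{itemize}
To handle the last term we exploit the oscillation. $b_{K+1}\bar\rho_{K+1}$ contains $\cos^2(\lambda_{K+1}x_{K+1})\phi_K$ in direction $e_K$. Pairing with the smooth $\nabla\varphi$ leaves only the mean part $\tfrac12\phi_K$, and $\|\phi_K\|_{L^\infty}\le\lambda_K^{\alpha_K-1}\to0$ by Proposition \ref{phi}. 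The remaining oscillatory part pairs to $O(\lambda_{K+1}^{-N})$ after integrating by parts in $x_{K+1}$.

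The term $b_{K+1}\rho_K$ is treated the same way. The bound $\|\rho_K\|\le C\lambda_K^{\alpha_K}$ from Proposition \ref{key}(i) is multiplied by $\int\|b_{K+1}\|\lesssim\lambda_K^{\alpha_K-1}\lambda_{K+1}^{-\alpha_{K+1}}$, which tends to $0$.

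Combining these, both sides converge and the identity holds. The zero initial datum is encoded by using test functions that do not vanish at $t=0$.
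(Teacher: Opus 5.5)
Your strategy is the paper's: block-wise Duhamel identities, regrouping into $\sum_{m\le K} b_m A_{m,K}$ plus the top-scale remainder $b_{K+1}(\rho_K-\bar\rho_{K+1})$, then $K\to\infty$. The strategy is sound, but two steps fail as written.

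First, the H\"older pairings in Steps 1 and 3 ($\varrho\in L^2_tL^\infty_x$ against $b\in L^q_tL^\infty_x$ with $q<2$) do not give $L^1_t$, because $\frac12+\frac1q>1$, and here $b\notin L^2_tL^\infty_x$. You need $\varrho_K\to\varrho$ in $L^{q'}_tL^\infty_x$ with $q'>2$.

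Second, your reason for that convergence is backwards. The blocks $\rho_k$ switch on at times $\lambda_{k+1}^{-2}\to0$ and persist afterwards. So at any fixed $t>0$ all but finitely many $\rho_k(t)$ are nonzero, and the tail $\sum_{k>K}\rho_k(t)$ is never eventually zero. Its pointwise vanishing comes from the decay in Proposition \ref{key}(iii). The natural dominating function is $\sum_k\|\rho_k(t)\|_{L^\infty}$, which the proof of Proposition \ref{principal-bounds} bounds block by block; its statement only bounds $\|\varrho(t)\|_{L^\infty}$.

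The paper avoids this issue. It interpolates $\int_0^\infty\|\rho_k\|_{L^\infty}\,dt\le C\lambda_k^{\alpha_k-2}$ with $\|\rho_k\|_{L^\infty_{t,x}}\le C\lambda_k^{\alpha_k}$ to get $\|\rho_k\|_{L^3_tL^\infty_x}\le C\lambda_k^{\alpha_k-2/3}$. This is summable, so $\varrho_N\to\varrho$ strongly in $L^3_tL^\infty_x$, and $b\in L^{3/2}_tL^\infty_x$ closes the transport term.

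The term you call the main obstacle is not one. Your ``naive'' count omits the factor $\|\phi_K\|_{L^\infty}\le\lambda_K^{\alpha_K-1}$ inside $\bar\rho_{K+1}$. With it, $\int_{J_{K+1}}\|b_{K+1}\bar\rho_{K+1}\|_{L^\infty}\,dt\le C\lambda_K^{\alpha_K-1}\to0$ directly, so no oscillation argument is needed. Your mean-part estimate is this same bound.

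Two small bookkeeping points:
\begin{itemize}
\item With the paper's definition of $f$, your Step 2 algebra yields $\int\!\!\int\varrho(\partial_t\varphi+b\cdot\nabla\varphi+\Delta\varphi)=-\langle f,\varphi\rangle$, not $+\langle f,\varphi\rangle$.
\item The $m=1$ piece $-b_1\bar\rho_1$ of your regrouped sum is not generated by the block identities. It may be inserted only because $b_1\cdot\nabla\bar\rho_1=0$.
\end{itemize}
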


\begin{proof}
Let $\varphi\in C_c^\infty([0,\infty)\times\tor)$. We prove
\[
\int_0^\infty\!\!\int_{\tor}
\varrho(\partial_t\varphi+b\cdot\nabla\varphi+\Delta\varphi)\,dx\,dt
=
-\langle f,\varphi\rangle.
\]
Let $\varrho_N:=\sum_{k=1}^N\rho_k$ and, for $1\le j\le N$,
\[
A_{j,N}:=\rho_{j-1}+(\rho_j-\bar\rho_j)+\sum_{j<k\le N}\rho_k .
\]
For $j=1$, we use $\rho_0=0$ and $b_1\cdot\nabla\bar\rho_1=0$.
Then
\[
\partial_t\varrho_N+b\cdot\nabla\varrho_N-\Delta\varrho_N
=
\sum_{j=1}^N b_j\cdot\nabla A_{j,N}
+b_{N+1}\cdot\nabla(\rho_N-\bar\rho_{N+1}).
\]
Integrating by parts, we obtain
\[
\begin{aligned}
&\int_0^\infty\!\!\int_{\tor}
\varrho_N(\partial_t\varphi+b\cdot\nabla\varphi+\Delta\varphi)\,dx\,dt\\
&=
\sum_{j=1}^N\int_0^\infty\!\!\int_{\tor}
A_{j,N}b_j\cdot\nabla\varphi\,dx\,dt
+
\int_0^\infty\!\!\int_{\tor}
(\rho_N-\bar\rho_{N+1})b_{N+1}\cdot\nabla\varphi\,dx\,dt .
\end{aligned}
\]

The integration of $\varrho_N(\partial_t\varphi+\Delta\varphi)$ converges because
\[
\int_0^\infty\|\rho_k(t)\|_{L^\infty}\,dt \le
C\lambda_k^{\alpha_k-2}.
\]
For the transport term, Proposition \ref{key} and maximal principle give
\[
\|\rho_k\|_{L_t^\infty L_x^\infty} \le
C\lambda_k^{\alpha_k}.
\]
Hence, by interpolation,
\[
\|\rho_k\|_{L_t^3L_x^\infty}
\le
C\lambda_k^{\alpha_k-2/3}.
\]
This is summable in $k$, so
\[
\varrho_N\to \varrho
\qquad\text{strongly in }L_t^3L_x^\infty.
\]
Since $b\in L_t^{3/2}L_x^\infty$, the transport term converges by
Hölder's inequality.

The estimates from Proposition~\ref{key} give
\[
\int_{J_j}\|A_{j,N}b_j\|_{L^\infty}\,dt
\le C\Bigl( \lambda_{j-1}^{2\alpha_{j-1}-1}\lambda_j^{-\alpha_j}
+\lambda_j^{-1}\lambda_{j-1}^{\alpha_{j-1}} \Bigr),
\]
which is summable in $j$, and the top-scale term tends to zero. Passing to
the limit,
\[
\begin{aligned}
    \int_0^\infty\!\!\int_{\tor}
\varrho(\partial_t\varphi+b\cdot\nabla\varphi+\Delta\varphi)\,dx\,dt
&=\sum_{j\ge1}\int_0^\infty\!\!\int_{\tor}A_jb_j\cdot\nabla\varphi\,dx\,dt\\
&= -\langle f,\varphi\rangle,  
\end{aligned}
\]
where
\[
A_j:=\rho_{j-1}+(\rho_j-\bar\rho_j)+\sum_{k>j}\rho_k .
\]
\end{proof}

\begin{proposition}\label{corrector-compactness}
There exist $n_\ell\to\infty$ and
\[
\rho_c\in\bigcap_{1\le p<\infty}L^p(0,\infty;L^\infty(\tor))
\]
such that, for every $0<\tau<T<\infty$,
\[
\rho^{n_\ell}\stackrel{*}{\rightharpoonup}\rho_c
\quad\text{in }L^\infty([\tau,T]\times\tor).
\]
Moreover,
\[
\partial_t\rho_c+b\cdot\nabla\rho_c-\Delta\rho_c=f
\]
in $\mathcal D'([0,\infty)\times\tor)$.
\end{proposition}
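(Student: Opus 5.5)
The plan is to combine the uniform bounds on the approximate correctors from Proposition~\ref{cor.decay} and the subsequent corollary with a diagonal weak-$*$ extraction, and then to pass to the limit in the weak formulation. On each compact interval $[\tau,T]\subset(0,\infty)$, Proposition~\ref{cor.decay} gives $\|\rho^n(t)\|_{L^\infty}\le Ct^{-\theta_k^+/2}\le C\tau^{-1/2}$ for $t\le1$. The exponential decay argument of the corollary gives a uniform bound for $t\ge1$. So $\{\rho^n\}$ is bounded in $L^\infty([\tau,T]\times\tor)$, uniformly in $n$. Taking $\tau_m=1/m$ and $T_m=m$, I would extract weak-$*$ convergent subsequences successively and pass to a diagonal subsequence $n_\ell$. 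This yields a single limit $\rho_c$ on $(0,\infty)\times\tor$. Weak-$*$ lower semicontinuity of the norm then transfers the pointwise-in-time envelope $\|\rho_c(t)\|_{L^\infty}\le C\min(t^{-\theta_k^+/2},e^{-ct})$. One can test against $\psi(t)g(x)$ with $g\in L^1$ and $\psi\ge0$ to get an a.e.\ $t$ bound. Since $\theta_k^+\to0$, this gives $\rho_c\in L^pL^\infty$ for all finite $p$, exactly as in Corollary~\ref{principal-lp}. Alternatively, the uniform $L^pL^\infty$ bounds of the corollary pass to the limit by lower semicontinuity.

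For the equation, fix $\varphi\in C_c^\infty([0,\infty)\times\tor)$. Each $\rho^n$ is smooth on $(\lambda_n^{-2},\infty)$ and vanishes on $[0,\lambda_n^{-2}]$. Integrating by parts against the equation on $[\lambda_n^{-2},\infty)$ therefore gives
\[
\int_0^\infty\!\!\int_{\tor}\rho^n(\partial_t\varphi+b\cdot\nabla\varphi+\Delta\varphi)\,dx\,dt
=-\sum_{k\le n}\int\!\!\int f_k\varphi
=\sum_{k\le n}\int\!\!\int b_kA_k\cdot\nabla\varphi ,
\]
with $A_k$ as in the proof of Proposition~\ref{weak.sol.p}. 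There is no boundary term, because $\rho^n$ is continuous and vanishes at $t=\lambda_n^{-2}$. The right-hand side converges to $-\langle f,\varphi\rangle$ by the summability estimate $\int_{J_k}\|A_kb_k\|_{L^\infty}\le C(\lambda_{k-1}^{2\alpha_{k-1}-1}\lambda_k^{-\alpha_k}+\lambda_k^{-1}\lambda_{k-1}^{\alpha_{k-1}})$ already established there. This estimate also shows that $f$ is a well-defined distribution.

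The main obstacle is the left-hand side near $t=0$, since weak-$*$ convergence is only local on $[\tau,T]$. I would split the integral at a small $\tau$. On $[\tau,\infty)$, weak-$*$ convergence applies directly: the test functions $\partial_t\varphi+\Delta\varphi$ and $b\cdot\nabla\varphi$ lie in $L^1([\tau,T]\times\tor)$, because $b$ is bounded on $[\tau,T]$. On $[0,\tau]$, I use the uniform bound $\|\rho^n\|_{L^3L^\infty}\le C_3$ together with $b\in L^{3/2}L^\infty$ (Proposition~\ref{b-lq}). Hölder's inequality gives
\[
\Bigl|\int_0^\tau\!\!\int\rho^n(\partial_t\varphi+b\cdot\nabla\varphi+\Delta\varphi)\Bigr|\le C_3\|\varphi\|_{C^2}\bigl(\tau^{2/3}+\|b\|_{L^{3/2}(0,\tau;L^\infty)}\bigr),
\]
uniformly in $n$. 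The same bound holds for $\rho_c$. This quantity tends to $0$ as $\tau\to0$, so letting $\ell\to\infty$ and then $\tau\to0$ yields the distributional identity with zero initial datum.
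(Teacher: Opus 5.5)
Your proposal is correct and is essentially the paper's proof: uniform $L^\infty_{t,x}$ bounds on $[\tau,T]$ with a diagonal Banach--Alaoglu extraction, weak-$*$ passage to the limit away from $t=0$, and a H\"older estimate on $(0,\tau)$ pairing $b\in L^{3/2}_tL^\infty_x$ with the uniform $L^3_tL^\infty_x$ bound (the paper uses a general $q\in(1,2)$ and its conjugate), together with absolute summability of the shell estimates for the forcing. One cosmetic slip: the envelope for $\rho_c$ should be the piecewise bound $Ct^{-\theta_k^+/2}$ on $I_k$ and $Ce^{-ct}$ for $t\ge1$, not the minimum of the two; the piecewise bound is all your $L^pL^\infty$ conclusion uses.
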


\begin{proof}
By Proposition~\ref{cor.decay}, we have uniform $L_{t,x}^\infty$ bound for $\rho^n$ on any time interval $[\tau,T]$ away from zero, as well as uniform global $L^pL^\infty$ bound. Therefore, Banach--Alaoglu and a diagonal argument give the local weak-$*$ limit, and the same envelope gives the stated integrability of
$\rho_c\in L^p(0,\infty;L^\infty(\tor))$.

For every $n$,
\[
\left\langle
\rho^n,-\partial_t\varphi-b\cdot\nabla\varphi-\Delta\varphi
\right\rangle
= \left\langle\sum_{k\le n}f_k,\varphi\right\rangle .
\]
On $[\delta,T]$, pass to the limit by local weak-$*$ convergence. On
$(0,\delta)$, choose $1<q<2$ and let $q'$ be conjugate to $q$:
\[
\int_0^\delta\|b(t)\rho^n(t)\|_{L^\infty}\,dt
\le \|b\|_{L^q(0,\delta;L^\infty)}
\sup_n\|\rho^n\|_{L^{q'}(0,T;L^\infty)} \to 0.
\]
The same estimate holds for $\rho_c$, and the non-transport terms are handled
by the uniform $L^p_tL^\infty_x$ bounds. Finally,
$\sum_{k\le n_\ell}f_k\to f$ in distributions by the summability above.
\end{proof}

Now we show smoothness of $\varrho-\rho_c$ for positive time.

\begin{corollary}\label{smooth-remainder}
$\varrho-\rho_c$ is a weak solution to \eqref{adv} that belongs to $C^\infty((0,\infty)\times\tor)$.
\end{corollary}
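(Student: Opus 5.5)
Subtracting the two equations already established gives the first claim. By Proposition~\ref{weak.sol.p}, $\varrho$ solves $\partial_t\varrho+b\cdot\nabla\varrho-\Delta\varrho=f$ in $\mathcal D'([0,\infty)\times\tor)$ with zero initial data. By Proposition~\ref{corrector-compactness}, $\rho_c$ solves the same equation with the same test-function class. Both are distributional identities tested against $\varphi\in C_c^\infty([0,\infty)\times\tor)$, so the initial datum is built in, and the difference $\rho:=\varrho-\rho_c$ satisfies \eqref{weak-form-intro}.

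For the integrability needed in the definition, we use two facts. First, $\rho\in\bigcap_{p<\infty}L^p_tL^\infty_x$ by Corollary~\ref{principal-lp} and the bound on $\rho_c$. Second, $b\in L^q_tL^\infty_x$ for every $q<2$ by Proposition~\ref{b-lq}. Hölder's inequality then gives $b\rho\in L^1_{\rm loc}$, hence $\rho,b\rho\in L^1_{\rm loc}$.

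For smoothness, I would argue locally in time, away from $t=0$. Fix $0<\tau<T$. On $[\tau,T]$ only finitely many $b_k$ are nonzero, since the intervals $J_k$ accumulate only at $0$, so $b$ is smooth and bounded with all derivatives there. Also $\rho\in L^\infty([\tau,T]\times\tor)$: this holds for $\varrho$ by Proposition~\ref{principal-bounds}, and for $\rho_c$ by the weak-$*$ limit of the uniform bounds in Proposition~\ref{cor.decay}. Thus on $(\tau,T)\times\tor$, $\rho$ is a bounded distributional solution of a linear parabolic equation $\partial_t\rho+b\cdot\nabla\rho-\Delta\rho=0$ with smooth coefficients. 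In divergence form the term is $\dv(b\rho)$, since $\dv b=0$.

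Interior parabolic regularity then gives $\rho\in C^\infty((\tau,T)\times\tor)$. I would bootstrap as follows. Write $\rho$ locally through Duhamel from a slightly later start, using a time cutoff $\chi(t)$: then $\chi\rho$ solves the heat equation with forcing $-\chi\dv(b\rho)+\chi'\rho$. Heat-kernel smoothing gains almost one derivative per step, from $L^\infty$ to $C^{\gamma}$, then to $C^{1+\gamma}$, and so on, by parabolic Schauder estimates, with time derivatives following from the equation. Since $\tau,T$ are arbitrary, $\rho\in C^\infty((0,\infty)\times\tor)$.

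The main obstacle is the first bootstrap step. It requires knowing that the weak-$*$ limit $\rho_c$ is a genuine distributional solution on $(\tau,T)$ with an $L^\infty$ bound, not merely an object defined through the limiting identity. Both facts are supplied by Proposition~\ref{corrector-compactness}. The remaining subtlety is that $f$ need not be smooth, as it involves $\bar\rho_k$, but $f$ cancels exactly in the difference, so no regularity of $f$ is needed.
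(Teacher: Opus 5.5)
Your proposal is correct and follows the paper's route. You subtract the forced equations from Propositions~\ref{weak.sol.p} and~\ref{corrector-compactness} so that $f$ cancels, use the $L^\infty([\tau,T]\times\tor)$ bounds on $\varrho$ and $\rho_c$ together with smoothness of $b$ away from $t=0$, and invoke interior parabolic regularity. Your H\"older check that $b\rho\in L^1_{\rm loc}$ and your Schauder bootstrap sketch only spell out what the paper leaves implicit. One small inaccuracy: $\bar\rho_k$ is smooth, so it is no reason for $f$ to fail to be smooth, but this does not matter because $f$ cancels.
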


\begin{proof}

By Proposition \ref{weak.sol.p} and Proposition \ref{corrector-compactness}, $\varrho-\rho_c$ solves
$$
\partial_t\rho+b\cdot\nabla\rho-\Delta\rho=0
$$
in $(0,\infty)\times\tor$.

Fix $0<\tau<T<\infty$. By Proposition \ref{principal-bounds} and Proposition \ref{corrector-compactness},
$$
\varrho-\rho_c\in L^\infty([\tau,T]\times\tor).
$$
Since the drift $b$ is smooth on $[\tau,T]\times\tor$, standard interior regularity for uniformly parabolic equations with smooth coefficients therefore yields
$$
\varrho-\rho_c\in C^\infty((\tau,T)\times\tor).
$$
Since $\tau$ and $T$ were arbitrary, the claim follows.

\end{proof}

\begin{corollary}\label{nontrivial}
    $\varrho-\rho_c$ is non-trivial.
\end{corollary}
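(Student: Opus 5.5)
The plan is to compare the two pieces at the times $t=\lambda_k^{-2}$. By Proposition~\ref{principal-bounds}, the principal part is large there:
\[
\li{\varrho(\lambda_k^{-2})}\ge C\lambda_k^{\alpha_k}.
\]
The corrector is smaller. By Proposition~\ref{cor.decay} (and the endpoint bound in Lemma~\ref{boundary-iteration}),
\[
\|\rho^n(\lambda_k^{-2})\|_{L^\infty}\le\lambda_k^{\theta_k}=\lambda_k^{\alpha_kZ_k},
\]
uniformly in $n$, and $Z_k<1$. So on a neighborhood of $t=\lambda_k^{-2}$ the corrector should be smaller by the factor $\lambda_k^{-\alpha_k/(k+2)}$. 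Since $\alpha_k\sim k^{-1/2}$ and $\log\lambda_k\sim k^3$, this factor tends to $0$ quickly.

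The difficulty is that $\rho_c$ is only a weak-$*$ limit, so it has no pointwise-in-time values. I would therefore avoid evaluating at a single time and average over a short window. Take $Q_k=[\lambda_k^{-2},(1+\delta)\lambda_k^{-2}]\times\tor$ and a test function $\psi_k(t,x)=\chi(\lambda_k^2 t)\,g_k(x)$. Here $\chi$ is a fixed bump on $[1,1+\delta]$, and $g_k=\cos(\lambda_kx_k)\prod_{i<k}\sin(\lambda_ix_i)$ is the spatial profile of $\bar\rho_k$, normalized in $L^1$.

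Weak-$*$ convergence on $[\tau,T]$ lets us pass to the limit in the pairing: $\langle\rho_c,\psi_k\rangle=\lim_\ell\langle\rho^{n_\ell},\psi_k\rangle$. The uniform bound $Ct^{-\theta_k^+/2}$ from Proposition~\ref{cor.decay} on $I_k$ then gives
\[
|\langle\rho_c,\psi_k\rangle|\le C\lambda_k^{\theta_k^+}\lambda_k^{-2}\|g_k\|_{L^1}.
\]

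For the principal part, Proposition~\ref{key}(ii) and the estimates in the proof of Proposition~\ref{principal-bounds} give $\varrho=\bar\rho_k+$ an error on $Q_k$. The error is $O(\lambda_{k-1}\lambda_k^{\alpha_k-1})+o(\lambda_k^{\alpha_k})$ uniformly in time. On $Q_k$ we have $\bar\rho_k=-\lambda_{k-1}^{1-\alpha_{k-1}}\lambda_k^{\alpha_k}e^{-\lambda_k^2t}\,\lambda_{k-1}^{\alpha_{k-1}-1}(\pm g_k)$. Pairing with $\psi_k$ therefore gives
\[
|\langle\bar\rho_k,\psi_k\rangle|\ge c\lambda_k^{\alpha_k}\lambda_k^{-2}\|g_k\|_{L^2}^2 .
\]
Super-lacunarity keeps the frequencies $\Omega_k$ distinct, so $\|g_k\|_{L^2}^2=2^{-k}$, while $\|g_k\|_{L^1}\le 1$.

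The main obstacle is this mismatch in normalization: the lower bound carries a factor $2^{-k}$ that the upper bounds do not. I would handle it by noting that the gap $\lambda_k^{\alpha_k-\theta_k^+}=\lambda_k^{\alpha_k(1-Z_k)/2}=\exp\bigl(c\,k^{-1/2}k^{-1}k^3\bigr)$ grows like $e^{ck^{3/2}}$, which dominates $2^k$. Likewise the error term is smaller by a power of $\lambda_k$, which also beats $2^k$.

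For the tail error I would use the $L^\infty\times L^1$ pairing, so that it is bounded by $\lambda_k^{-2}\cdot$(error)$\cdot\|g_k\|_{L^1}$. This is negligible against $2^{-k}\lambda_k^{\alpha_k-2}$ once the error is a negative power of $\lambda_k$ relative to $\lambda_k^{\alpha_k}$; for instance $\lambda_{k-1}\lambda_k^{-1}\ll 2^{-k}$. Hence for large $k$,
\[
\langle\varrho-\rho_c,\psi_k\rangle\neq0,
\]
so $\varrho-\rho_c\not\equiv0$. Combined with Corollary~\ref{smooth-remainder}, this shows the solution is nontrivial.

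Alternatively, one could use $\|\cdot\|_{L^\infty}$ norms directly. By Corollary~\ref{smooth-remainder}, $\varrho-\rho_c$ is continuous, and weak-$*$ lower semicontinuity bounds $\|\rho_c\|_{L^\infty(Q_k)}$ by $C\lambda_k^{\theta_k^+}$. Meanwhile $\varrho$ is continuous in time near $\lambda_k^{-2}$ with $\|\varrho\|\ge c\lambda_k^{\alpha_k}$ on a short window, by Proposition~\ref{key}(ii) and the explicit formula for $\bar\rho_k$. This version is simpler, and I would adopt it if the pointwise bounds hold uniformly on the window.
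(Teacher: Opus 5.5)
Your proof is correct. The fallback in your last paragraph is essentially the paper's argument. The paper uses Corollary~\ref{smooth-remainder} to give $\rho_c$ pointwise-in-time values. It then transfers the uniform bound $\|\rho^n(t)\|_{L^\infty}\le Ct^{-\theta_k^+/2}$ of Proposition~\ref{cor.decay} to $\rho_c$. Finally it compares this with $\|\varrho(\lambda_k^{-2})\|_{L^\infty}\ge C\lambda_k^{\alpha_k}$ at the single time $\lambda_k^{-2}$, where the ratio $\lambda_k^{\alpha_k-\theta_k^+}$ diverges.

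Your primary route is a weak version of the same comparison. Pairing with $\psi_k=\chi(\lambda_k^2t)g_k$ uses only the weak-$*$ convergence from Proposition~\ref{corrector-compactness}. It never needs a representative of $\rho_c$ at a fixed time, so it does not rely on the regularity corollary. The cost is the mismatch between $\|g_k\|_{L^2}^2\simeq 2^{-k}$ and $\|g_k\|_{L^1}\le 1$. You correctly absorb this into the gap $\lambda_k^{\alpha_k-\theta_k^+}=e^{ck^{3/2}}$. The paper's version is shorter; yours is more robust.

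One bookkeeping point concerns your window $(\lambda_k^{-2},(1+\delta)\lambda_k^{-2}]$ and applies to both of your versions. On that window, $\rho_{k-1}$ is already being generated, since $b_k$ is active on $J_k$. The proof of Proposition~\ref{principal-bounds} only treats $t\le\lambda_k^{-2}$, where $\rho_{k-1}=0$, so citing it does not cover this term. You need Proposition~\ref{key}(i), which gives $\|\rho_{k-1}\|_{L^\infty}\le C\lambda_{k-1}^{\alpha_{k-1}}$ on $J_k$. This is smaller than $\lambda_k^{\alpha_k}$ by a factor of order $e^{-ck^{3/2}}$, so it is harmless even against the extra $2^{-k}$. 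The bounds for $\rho_k-\bar\rho_k$, $\rho_{k+1}$ and the tail that you borrow are stated for $t\ge\lambda_k^{-2}$ (or later), so they do carry over to the window.
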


\begin{proof}
    By Corollary \ref{smooth-remainder}, the $L^\infty$ norm of $\rho_c$ at each time makes sense. By Proposition \ref{cor.decay}, $|\rho^n(t)|\le Ct^{-\theta_k^+/2}$ for $\lambda_k^{-2}\le t\le\lambda_{k-1}^{-2}$. By Proposition \ref{corrector-compactness}, this uniform bound also holds for $\rho_c$. Then by Proposition \ref{principal-bounds}, we have
    $$
    \frac{\li{\varrho(\lambda_k^{-2})}}{\li{\rho_c(\lambda_k^{-2})}}\ge C\lambda_k^{\alpha_k-\theta_k^+}\to\infty
    $$
    since $\lambda_k\sim\lambda_1^{ck^3+O(k^2)}$. Therefore, $\varrho-\rho_c$ is non-trivial.

\end{proof}

\section{A weak-$L^2$ threshold regime}

This section uses a new cascade, independent of the construction in
Theorem~\ref{main}. All parameter-dependent objects, including
$\phi_k,\bar\rho_k,b_k,\rho_k,f_k,f,\rho^n,$ and $\rho_c$, are rebuilt
from the borderline choice of parameters below.

\begin{proposition}\label{weak-l2-choice}
There exist a divergence-free drift
\[
b \in C^\infty((0,\infty)\times\tor) \cap L_t^{2,\infty}L_x^\infty
\cap L_{t,x}^{2,\infty}
\]
and a nonzero function
\[
\rho\in C^\infty((0,\infty)\times\tor)
\cap\bigcap_{0<r\le\infty}L_t^{2,r}L_x^\infty
\]
such that $b\rho\in L_t^1L_x^\infty$ and $\rho$ solves \eqref{adv}
with zero initial datum in the sense of \eqref{weak-form-intro}.
\end{proposition}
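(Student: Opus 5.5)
The plan is to rerun the construction of Sections 2--6 with a borderline choice of parameters. We freeze $\alpha_k\equiv\alpha$ with $0<\alpha<1$ fixed and $s_k\equiv s>1$ fixed, so that $\lambda_{k+1}=\lambda_k[\lambda_k^{s-1}]$ is super-exponentially lacunary with $\log\lambda_k\sim s^k$. The building blocks $\phi_k,\bar\rho_k,b_k$ keep their form, with $\alpha_{k-1},\alpha_k$ replaced by $\alpha$. Then on $J_k$
\[
\|b_k\|_{L^\infty}\sim \lambda_{k-1}^{\alpha-1}\lambda_k^{2-\alpha}\le \lambda_k^{2-\alpha-(1-\alpha)/s}.
\]
The choice of $b_k$ must make $\|b(t)\|_{L^\infty}\lesssim t^{-1/2}$ on $J_k$, i.e. amplitude at most $\lambda_k$. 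We therefore rescale the drift amplitude to $\lambda_k$ (say $b_k=\lambda_k\eta(\lambda_k^2t)\cos(\lambda_kx_k)e_{k-1}$) and the scalar amplitude $\bar\rho_k\sim\lambda_{k-1}\lambda_k^{\alpha-1}\cdot\lambda_k/\lambda_k\cdots$ accordingly. Concretely, the potential is $\phi_k=\mu_k\sin(\lambda_kx_k)\phi_{k-1}$ with $\mu_k$ chosen so that $\operatorname{div}(b_{k+1}\bar\rho_{k+1})$ still reproduces $\partial_k\phi_k$ with unit total mass of $K_{k+1}$. Since the $J_k$ are disjoint, $\|b(t)\|_{L^\infty}\le Ct^{-1/2}$ then gives $b\in L^{2,\infty}_tL^\infty_x$, and $L^{2,\infty}_{t,x}$ follows similarly. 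The scalar is arranged so that $\|\varrho(t)\|_{L^\infty}\lesssim t^{-1/2}\epsilon_k$ on $I_k$, with $\epsilon_k\to0$ fast enough that $\sum_k\epsilon_k^r<\infty$. This gives $\rho\in L^{2,r}_tL^\infty_x$ for all $r>0$, and $b\rho\in L^1_tL^\infty_x$ because $\int_{J_k}\|b\rho\|\,dt\lesssim\epsilon_k$ is summable. The natural choice is $\epsilon_k=\lambda_k^{-\delta}$, i.e. $\|\bar\rho_k\|\sim\lambda_k^{1-\delta}$, playing the role of $\lambda_k^{\alpha_k}$ with $\alpha_k\to1^-$ replaced by a fixed deficit.

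The key steps, in order, are as follows.

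First, re-prove Proposition \ref{parameter} in the new regime. The exponent gaps are now fixed multiples of $\log\lambda_k$ rather than $O(\log\lambda_k/k)$, so \eqref{st1}--\eqref{st7} hold with fixed $m,N$ once $\lambda_1$ is large. The one thing to check is that the gap between $\theta_k$ and $\theta_{k-1}$ remains positive.

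Second, redo Propositions \ref{phi} and \ref{key}, and Lemma \ref{ece} verbatim, obtaining the analogue of Proposition \ref{principal-bounds}:
\[
\|\varrho(t)\|_{L^\infty}\le C\lambda_k^{-\delta}t^{-1/2}\ \text{ on } [\lambda_{k+1}^{-2},\lambda_k^{-2}],
\qquad
\|\varrho(\lambda_k^{-2})\|_{L^\infty}\ge c\lambda_k^{1-\delta}.
\]

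Third, run Lemma \ref{boundary-iteration} with corrector thresholds $\lambda_k^{\theta}$ for some $\theta<1-\delta$. This yields a corrector bound $\lambda_k^{\theta^+}$, which is still $o(\lambda_k^{1-\delta})$ at $t=\lambda_k^{-2}$. Nontriviality then follows exactly as in Corollary \ref{nontrivial}.

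Fourth, carry out the compactness and limiting arguments of Propositions \ref{weak.sol.p} and \ref{corrector-compactness}. The step using $b\in L^q_tL^\infty_x$ with $q<2$ must be replaced by the direct block estimate $\int_0^\delta\|b\rho^n\|_{L^\infty}\,dt\le\sum_{\lambda_k^{-2}<\delta}C\lambda_k^{-\delta'}\to0$. This estimate is uniform in $n$ by the lemma's bounds on $J_k$. Smoothness then follows as in Corollary \ref{smooth-remainder}.

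The main obstacle is the balance in the third step. The forced part of the iteration involves terms like $\lambda_{k-1}^{2\alpha}\lambda_k^{-\alpha}$, and at the endpoint scaling these must remain below the threshold $\lambda_{k-1}^{\theta}$. The available exponent margin comes only from the fixed deficit $\delta$ together with the lacunarity ratio $s$. I would first try to choose $s$ large (say $s=2$) and $\delta$ small, and verify the analogues of \eqref{st4}--\eqref{st5} as linear inequalities in $(\alpha,\delta,\theta,s)$. If these fail, the fallback is to let the threshold decrease slowly, at the cost of a logarithmic loss absorbed into $\epsilon_k$.
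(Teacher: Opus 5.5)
Your plan of rerunning the cascade with borderline parameters is the right idea, but the regime you propose is inconsistent. The gap is that in this cascade the drift amplitude is not a free parameter.

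Write $\phi_k=\mu_k\sin(\lambda_kx_k)\phi_{k-1}$ and $b_k=A_k\eta(\lambda_k^2t)\cos(\lambda_kx_k)e_{k-1}$. Then $\bar\rho_k=\mu_k\lambda_ke^{-\lambda_k^2t}\cos(\lambda_kx_k)\phi_{k-1}$, and the Duhamel integral defining $\rho_{k-1}$ produces $\partial_{k-1}\phi_{k-1}$ with total weight $A_k|\mu_k|\lambda_k^{-1}$. Unit mass is exactly what gives $\rho_{k-1}\approx\bar\rho_{k-1}$ in Proposition~\ref{key}(ii), and it forces $A_k|\mu_k|=\lambda_k$. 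Set $\epsilon_k:=\prod_{i\le k}|\mu_i|\simeq\|\phi_k\|_{L^\infty}$, so that $\|\bar\rho_k(\lambda_k^{-2})\|_{L^\infty}\simeq\epsilon_k\lambda_k$. Then
\[
\|b_k\|_{L^\infty}\,|J_k|^{1/2}\simeq\frac{1}{|\mu_k|}=\frac{\epsilon_{k-1}}{\epsilon_k},
\]
so $b\in L^{2,\infty}_tL^\infty_x$ forces $\sup_k\epsilon_{k-1}/\epsilon_k<\infty$. In other words, $\epsilon_k$ may decay at most geometrically.

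Your choice $\epsilon_k=\lambda_k^{-\delta}$ gives $\epsilon_{k-1}/\epsilon_k=(\lambda_k/\lambda_{k-1})^{\delta}\to\infty$. This is precisely the excess $\lambda_{k-1}^{\alpha-1}\lambda_k^{2-\alpha}=\lambda_k(\lambda_k/\lambda_{k-1})^{1-\alpha}$ you computed for the original blocks, and no rescaling removes it. If you instead impose $A_k=\lambda_k$, then $\mu_k=1$ and $\epsilon_k\equiv1$, that is, $\alpha=1$. In that case $\|\varrho(t)\|_{L^\infty}\gtrsim\lambda_k$ for $t\in[2\lambda_{k+1}^{-2},\lambda_k^{-2}]$, so $\varrho$ is at best in $L^{2,\infty}_t$ and in no $L^{2,r}_t$ with $r<\infty$. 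Moreover, $b$ and $\varrho\approx\bar\rho_k$ both have size $\simeq\lambda_k$ on $J_k$, so $\int_{J_k}\|b\varrho\|_{L^\infty}\,dt\gtrsim1$ for every $k$. Thus your steps two to four, which use $\|\bar\rho_k\|\sim\lambda_k^{1-\delta}$ together with $\|b_k\|\lesssim\lambda_k$, have no realization. Your ``fallback'' targets \eqref{st4}--\eqref{st5}, which is not where the obstruction lies.

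The missing idea is that $\epsilon_k$ must decay \emph{exactly} geometrically, $|\mu_k|\equiv e^{-\kappa}$. The paper keeps the original building blocks and takes $\alpha_k=1-\delta_k$ with $\delta_k=\kappa(k+2)/\log\lambda_k\to0$. Then $\lambda_k^{\alpha_k-1}=e^{-\kappa(k+2)}$ and $\|b_k\|_{L^\infty}\lesssim e^{\kappa}\lambda_k$. Also $\|\rho_k\|_{L^{2,r}_tL^\infty_x}\lesssim\lambda_k^{\alpha_k-1}$ (by interpolating $L^\infty_t$ and $L^1_t$), which is summable for every $r>0$.

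The price is that the margins in \eqref{st4}--\eqref{st7} are no longer powers of $\lambda_k$ but of size $e^{c\kappa k}$. Concretely, \eqref{st5} has margin exactly $e^{-\kappa}$, \eqref{st4} needs $\log(\lambda_k/\lambda_{k-1})\gg\kappa k$, and $1-\theta_k/\theta_k^+\simeq\delta_k$. Hence the paper takes $\theta_k=\alpha_k-\delta_k$, $N_k=\lceil M(k+2)\rceil$ and $s_k=1+2/(k+2)$, so $\log\lambda_k\sim C_0k^2$ with $C_0/\kappa$ large.

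The choice $s_k\to1$ also fixes a second flaw in your integrability step. Since $\|t^{-1/2}\mathbf 1_{I_k}\|_{L^{2,r}}^r\simeq\log(\lambda_k/\lambda_{k-1})$, an envelope $\epsilon_kt^{-1/2}$ on $I_k$ yields $L^{2,r}$ only if $\sum_k\epsilon_k^r\log(\lambda_k/\lambda_{k-1})<\infty$, not merely $\sum_k\epsilon_k^r<\infty$. With fixed $s$ the logarithm grows like $s^k$, which defeats the forced geometric decay for small $r$ unless you sharpen the envelopes. With $\log(\lambda_k/\lambda_{k-1})=O(k)$, the corrector envelope $\sum_ke^{-c\kappa k}t^{-1/2}\mathbf 1_{I_k}$ lies in every $L^{2,r}$.
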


\begin{proof}
We use the same building blocks and truncated corrector scheme as before.
We only need to choose the parameters to fit the passive scalar and vector field in the regularity class claimed above and verify the parameter conditions with updated estimates in Proposition \ref{parameter}.

\textbf{Choice of Parameters.}
We fix $\kappa>0$ sufficiently large and let
\[
s_k=1+\frac2{k+2},\qquad
\lambda_{k+1}=\lambda_k[\lambda_k^{s_k-1}],\qquad
\lambda_0=\alpha_0=1,\qquad L_k:=\log\lambda_k,
\]
where $\lambda_1\in\mathbb N$, and
\[
\delta_k:=\frac{\kappa(k+2)}{L_k},\qquad
\alpha_k:=1-\delta_k,\qquad
\theta_k:=\alpha_k-\delta_k,\qquad
\theta_k^+:=\frac{\alpha_k+\theta_k}{2}.
\]
For a constant $C_0>0$ determined by $\lambda_1$,
\[
L_k=C_0k^2+O(k), \qquad
\delta_k\sim\frac{\kappa}{C_0k}.
\]
Choose $\lambda_1$ so that $C_0/\kappa$ is sufficiently large and fixed.
Then the usual lacunarity conditions hold and
\[
0<\theta_{k-1}<\theta_k<\theta_k^+<\alpha_k<1, \qquad
\frac{\theta_k}{\theta_k^+}>\frac{L_{k-1}}{L_k}, \qquad
2s_k+s_{k-1}^{-1}>3.
\]

On $J_k$,
\[
\|b_k\|_{L^\infty} \lesssim
\lambda_{k-1}^{-\delta_{k-1}}\lambda_k^{1+\delta_k}, \qquad
|J_k|\,\|b_k\|_{L^\infty}^2\lesssim e^{2\kappa}.
\]
Hence,
\[
\|b(t)\|_{L^\infty}\lesssim e^\kappa t^{-1/2}, \qquad
b\in L_t^{2,\infty}L_x^\infty \cap L_{t,x}^{2,\infty}.
\]

\textbf{Parameter Conditions.}
Take $m_k=k+2$.  Since $0<\alpha_k<1$, the exponent gaps in
\eqref{st1}--\eqref{st3} are bounded below uniformly; the factors
$(Cm_k)^{m_k}$ are therefore absorbed by $L_k\sim k^2$.

Since $\delta_jL_j=\kappa(j+2)$,
\[
\frac{\lambda_{k-1}^{\alpha_{k-1}+1}\lambda_k^{-1}} {\lambda_{k-1}^{\theta_{k-1}}} = e^{\kappa(k+1)-(L_k-L_{k-1})}, \qquad
\frac{\lambda_{k-1}^{2\alpha_{k-1}}\lambda_k^{-\alpha_k}} {\lambda_{k-1}^{\theta_{k-1}}} = e^{\kappa(k+2)-(L_k-L_{k-1})},
\]
and
\[
\lambda_{k-1}^{\alpha_{k-1}-\theta_{k-1}} \lambda_k^{-\alpha_k+\theta_k}
=e^{-\kappa}.
\]

Thus \eqref{st4} follows by taking $C_0/\kappa$ sufficiently large,
and \eqref{st5} follows by taking $\kappa$ sufficiently large.

For \eqref{st6}--\eqref{st7}, take
\[
N_k=\lceil M(k+2)\rceil,
\]
where $M$ is sufficiently large, depending only on $d$ and the fixed
ratio $C_0/\kappa$. The corresponding exponent gains are
\[
A_k := 2N_k\left(1-\frac{\theta_k}{\theta_k^+}\right) + \theta_k\left(\frac{\theta_{k-1}}{\theta_k^+}-1\right)
\]
and
\[
B_k := 2N_k\left(1-\frac{L_{k-1}}{L_k}\right) + \frac{L_{k-1}}{L_k}\theta_{k-1}-\theta_k.
\]
Since
\[
1-\frac{\theta_k}{\theta_k^+} \sim k^{-1}, \qquad \theta_k^+-\theta_{k-1} \sim k^{-1}, \qquad 1-\frac{L_{k-1}}{L_k}\sim k^{-1},
\]
we have $A_k$, $B_k \ge c>0$ for $M$ sufficiently large. Finally, the remaining factors are $e^{O(k\log k)}$, whole $L_k \sim k^2$, so \eqref{st6}--\eqref{st7} follow.

\textbf{Principal Part.}
Proposition~\ref{key} and the same Duhamel calculation give
\[
\|\rho_k\|_{L_t^\infty L_x^\infty}
\le C\lambda_k^{\alpha_k}, \qquad
\|\rho_k\|_{L_t^1L_x^\infty}
\le C\lambda_k^{\alpha_k-2}.
\]
Hence, for every $0<r\le\infty$,
\[
\|\rho_k\|_{L_t^{2,r}L_x^\infty} \lesssim_r
\lambda_k^{\alpha_k-1} =
e^{-\kappa(k+2)}.
\]
Thus $\varrho:=\sum_k\rho_k$
converges in $L_t^{2,r}L_x^\infty$ for every $0<r\le\infty$.

Now let $\tau_k:=\lambda_k^{-2}$. The lower-bound proof of Proposition~\ref{principal-bounds} is unchanged except that
\[
\frac{\lambda_k\lambda_{k+1}^{\alpha_{k+1}-1}}
{\lambda_k^{\alpha_k}} =e^{-\kappa}, \qquad
\|\bar\rho_{k+1}(\tau_k)\|_{L^\infty} \lesssim
\lambda_{k+1}^{\alpha_{k+1}}
e^{-(\lambda_{k+1}/\lambda_k)^2} =o(\lambda_k^{\alpha_k}),
\]
and
\[
\sum_{j\ge k+2}\lambda_{j-1}^{\alpha_j-1}
\lesssim e^{-c\kappa k}.
\]
Therefore, after increasing $\kappa$,
\[
\|\varrho(\tau_k)\|_{L^\infty} \gtrsim \lambda_k^{\alpha_k}.
\]

\textbf{Corrector and Passage to the Limit.}
The iteration lemma gives, uniformly in $n$,
\[
\|\rho^n(t)\|_{L^\infty} \le
Ct^{-\theta_k^+/2} \le
Ce^{-c\kappa k}t^{-1/2}, \qquad t\in I_k.
\]
Together with the uniform positive-time decay, this yields a common upper bound
\[
G(t):=
C\sum_{k\ge1}
e^{-c\kappa k}t^{-1/2}\mathbf 1_{I_k}(t) +
Ce^{-ct}\mathbf 1_{[1,\infty)}(t) \in\bigcap_{0<r\le\infty}L^{2,r}(0,\infty),
\]
where we used
\[
\log\frac{\lambda_k}{\lambda_{k-1}}=O(k).
\]
Hence a subsequence converges locally weak-$*$ to a corrector satisfying
\[
\rho_c\in \bigcap_{0<r\le\infty}L_t^{2,r}L_x^\infty, \qquad
|\rho_c|\le G,
\]
and
\[
\|\rho_c(t)\|_{L^\infty} \lesssim t^{-\theta_k^+/2}, \qquad t\in I_k.
\]

The shell-wise estimate defining $f$ remains summable, since
\[
\lambda_{j-1}^{2\alpha_{j-1}-1}\lambda_j^{-\alpha_j} + \lambda_j^{-1}\lambda_{j-1}^{\alpha_{j-1}}
\lesssim e^{-cj}.
\]
Thus the finite-sum arguments in Proposition~\ref{weak.sol.p} and
\ref{corrector-compactness} carry over with Lorentz H\"older. The only new bounds are
\[
\left\| b\sum_{j>N}\rho_j \right\|_{L_t^1L_x^\infty}
\lesssim \|b\|_{L_t^{2,\infty}L_x^\infty}
\left\| \sum_{j>N}\rho_j \right\|_{L_t^{2,1}L_x^\infty} \to 0,
\]
\[
\begin{aligned}
&\int_{J_{N+1}} \|b_{N+1}(\rho_N-\bar\rho_{N+1})\|_{L^\infty}\,dt\\
&\qquad\lesssim e^{-(L_{N+1}-L_N)-\kappa(N+1)} + e^{-\kappa(N+2)} \to 0,
\end{aligned}
\]
and
\[
\sup_n\int_0^\delta \|b(t)\rho^n(t)\|_{L^\infty}\,dt \lesssim \|b\|_{L_t^{2,\infty}L_x^\infty}
\|G\mathbf 1_{(0,\delta)}\|_{L_t^{2,1}} \to 0.
\]
The remaining terms are controlled by $G\in L^1_{\mathrm{loc}}$. Thus $\varrho$ and $\rho_c$ solve the same forced equation with zero initial data.

Let
\[
\rho:=\varrho-\rho_c.
\]
Since $b$ is smooth for positive time, we have $\rho\in C^\infty((0,\infty)\times\tor)$. The estimate for the corrector implies that $\rho$ is nonzero. Finally,
\[
\|b\rho\|_{L_t^1L_x^\infty}
\lesssim \|b\|_{L_t^{2,\infty}L_x^\infty}
\|\rho\|_{L_t^{2,1}L_x^\infty}
<\infty.
\]

\end{proof}

\section{Bounded-energy inverse-mixing construction}

For $A>0$, let
\[
F_A(s)=s^2\exp\!\left[-A\bigl(\log(e+s)\bigr)^{2/3}\right],
\qquad s\ge0.
\]

Our main construction is the Fourier space produces a diffusion-assisted inverse cascade of energy that blows up as $t \to 0$. Here we pursue a different scenario where only
a finite amount of energy starts cascading from the infinite frequencies at the initial time. This can only be achieved via almost pure inverse energy cascade with perturbative diffusion. We use the inviscid perfect-mixing construction of
Alberti, Crippa, and Mazzucato and run each inverse energy transfer step on a time
scale much shorter than diffusion.  This yields a bounded solution in the global
energy space.

\begin{theorem}
\label{thm:bounded-energy}
For every $d\ge2$, there exist a divergence-free drift
\[
b\in C^\infty((0,\infty)\times\tor)
\]
and a nonzero solution
\[
\rho\in C^\infty((0,\infty)\times\tor) \cap L^\infty((0,\infty)\times\tor)
\cap L^\infty(0,\infty;L^2(\tor)) \cap L^2(0,\infty;H^1(\tor))
\]
of \eqref{adv} with zero initial datum. Moreover,
\begin{equation}\label{be-orlicz}
\int_0^\infty G_A\!\left(\|b(t)\|_{L^\infty}\right)\,dt<\infty
\qquad \forall A>0,
\end{equation}
while
\begin{equation}\label{be-not-weak-L2}
b\notin L^{2,\infty}(0,T;L^\infty(\tor)) \qquad \forall T>0.
\end{equation}
The scalar has zero mean,
\[
\rho\in C_w([0,\infty); L^2(\tor)),  \qquad \rho(0)=0, \qquad \rho\in\bigcap_{1\le p<\infty}L^p(0,\infty;L^\infty(\tor)),
\]
and there is $E_* > 0$ such that
\begin{equation}\label{injected_energy}
\lim_{t \to 0}\|\rho(t)\|_{L^2}^2=E_*.
\end{equation}
For every $0<t_0<t<\infty$,
\begin{equation}\label{be-energy_equality_positive}
\frac12 \|\rho(t)\|_{L^2}^2  + \int_{t_0}^t\|\nabla\rho(\tau)\|_{L^2}^2\,d \tau= \frac12\|\rho(t_0)\\\|_{L^2}^2,
\end{equation}
and hence
\begin{equation}\label{be-global-energy}
\frac12 \|\rho(t)\|_{L^2}^2 +\int_0^t\|\nabla\rho(\tau)\|_{L^2}^2\,d\tau =\frac12E_*.
\end{equation}
\end{theorem}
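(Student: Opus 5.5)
We will build the solution by running the Alberti--Crippa--Mazzucato perfect-mixing flow backward in time and compressing it into a sequence of very short time windows accumulating at $t=0$. The inviscid ACM construction provides a divergence-free $u\in C^\infty$ on $(0,1]$ and a bounded scalar $\theta$ with $\|\theta(t)\|_{L^\infty}=1$ and $\|\theta(t)\|_{L^2}$ constant. Its unmixing in reverse time proceeds through a dyadic sequence of steps: on the $k$-th step the scalar is a checkerboard (smoothed) at scale $\lambda_k^{-1}$ with $\lambda_{k+1}=2\lambda_k$ (or a faster geometric rate), and a cellular velocity of size $\sim\lambda_k^{-1}$ per unit time turns it into the next coarser checkerboard. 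Reversing time gives inverse mixing: at $t=0^+$ the scalar is weakly zero (mixed at infinitely fine scale), and it ends at a fixed nonzero profile. We set step $k$ on an interval $J_k$ of length $\tau_k$ and rescale the velocity by $\tau_k^{-1}$, so $\|b\|_{L^\infty}\sim\lambda_k^{-1}\tau_k^{-1}$ on $J_k$. We will insert diffusion-free gaps between the steps only where harmless. Diffusion is perturbative provided $\lambda_k^2\tau_k\ll1$ summably, i.e. $\sum_k\lambda_k^2\tau_k<\infty$ small, because the $L^2$ loss in step $k$ is at most $\int_{J_k}\|\nabla\rho\|_{L^2}^2\lesssim\lambda_k^2\tau_k$ (the scalar has gradient $\sim\lambda_k$ there).

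The parameter choice comes from balancing this condition against \eqref{be-orlicz}. Take $\tau_k=\lambda_k^{-2}\exp(-(\log\lambda_k)^{\beta})$ with $\beta<3/4$ (e.g. $\beta=2/3$, matching $F_A$). Then $\|b\|_{L^\infty}\sim s_k:=\lambda_k\exp((\log\lambda_k)^\beta)$, and the contribution of $J_k$ to the Orlicz integral is $\tau_k G_A(s_k)\approx\exp((\log\lambda_k)^\beta - A(\log\lambda_k)^{3/4})$, which is summable for every $A$. Meanwhile $\sum\lambda_k^2\tau_k=\sum\exp(-(\log\lambda_k)^\beta)<\infty$ as soon as $\log\lambda_k$ grows such that $k^{\beta}$-type sums converge. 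With $\lambda_k=2^k$ we get $\exp(-ck^{2/3})$, which is summable. The failure of \eqref{be-not-weak-L2} is checked directly: $|\{\|b\|>s_k\}|\gtrsim\tau_k$ gives $s_k^2\tau_k=\exp((\log\lambda_k)^\beta)\to\infty$. The time windows sum to $\sum\tau_k<\infty$, so they fit in $(0,1]$, and $b$ is smooth for positive time after smoothing the time cutoffs with bumps as in the construction of Theorem \ref{main}.

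The core step is the construction of the viscous solution and the energy accounting. We cannot simply take the inviscid $\theta$, so we define $\rho^n$ as the solution of \eqref{adv} with drift $b$ and data $\rho^n(t_n)=\theta(t_n)$ prescribed at the start of window $n$. By stability in $L^2$, we have $\|\rho^n(t)-\theta(t)\|_{L^2}\lesssim(\sum_{k\ge k(t)}\lambda_k^2\tau_k)^{1/2}$ (Duhamel and an energy estimate for the difference, forced by $\Delta\theta$, whose $L^2_tL^2$ size on $J_k$ is $\lambda_k^2\tau_k^{1/2}$). This is the \emph{main obstacle}: a naive bound gives $\|\Delta\theta\|_{L^1_tL^2}\sim\lambda_k^2\tau_k$, which is fine, but $\theta$ is only Lipschitz-smoothed, so we must use the smooth ACM version with $\|\nabla^2\theta\|\lesssim\lambda_k^2$ on step $k$ and check that the gaps between steps (pure heat flow on a checkerboard of scale $\lambda_k^{-1}$) are themselves of length $\ll\lambda_k^{-2}$. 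We handle this by making the steps contiguous. The uniform maximum principle then gives $\|\rho^n\|_{L^\infty}\le1$ and the energy equality gives a uniform bound in $L^\infty L^2\cap L^2H^1$. A weak-$*$ limit $\rho$ solves \eqref{adv}. Interior regularity yields $\rho\in C^\infty$ for positive times. Zero initial data follows from weak convergence $\theta(t)\rightharpoonup0$ as $t\to0$ together with the uniform closeness estimate. Nontriviality and \eqref{injected_energy} follow from $\|\rho(t)-\theta(t)\|_{L^2}\to0$ as $t\to0$ and $\|\theta(t)\|_{L^2}^2=E_*$. The identities \eqref{be-energy_equality_positive} and \eqref{be-global-energy} follow from smoothness, by letting $t_0\to0$. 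The $L^p_tL^\infty_x$ bounds are trivial from boundedness on $(0,1)$ and exponential heat decay of a zero-mean scalar for $t\ge1$, where $b=0$.
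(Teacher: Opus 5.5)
Your proposal is correct and follows the paper's route. You compress the ACM unmixing blocks into short windows so that diffusion is perturbative, run an energy estimate for $\rho^n-\theta$ forced by $\Delta\theta$, start the approximations from the fine-scale profile, and take a weak-$*$ limit; after integrating by parts that estimate only needs $\int\|\nabla\theta\|_{L^2}^2$, so the worry about controlling $\nabla^2\theta$ is unnecessary. The differences are minor: you use a stretched-exponential rather than polynomial compression factor, and you get nontriviality from the vanishing tail of the squared error as $t\to0$ (which even yields $E_*=\|\theta\|_{L^2}^2$), whereas the paper makes $\sum\varepsilon_k$ small and compares at the terminal time $\tau_0$.
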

As in the previous constructions, the solution satisfies the energy equality starting from any $t_0>0$, but not the initial time $t_0=0$. The amount of energy that cascades from the infinite frequency is $E_*$, which produces the initial energy defect.

{\bf Perfect-mixing building blocks}

We will use the smooth quasi-self-similar mixer of
\cite[Sections~6 and~8]{ACM}.  The following stagewise formulation is obtained
by applying a flattening reparametrization near $t=0$ and $t=1$, and reversing time.
\begin{proposition}\label{be-mixer}
There are profiles $\vartheta_k\in C^\infty(\mathbb T^d)$, $k\ge0$,
and smooth pairs $(U_k,\Theta_k)$ on
$[0,1]\times\mathbb T^d$, $k\ge1$, such that, with $\ell_k=5^{-k}$,
\begin{align}
&\dv U_k=0, \qquad \partial_s \Theta_k+U_k\cdot \nabla\Theta_k=0, \qquad
\Theta_k(0)=\vartheta_k, \quad
\Theta_k(1)=\vartheta_{k-1};  \label{be-block-transport}\\
&\int_{\mathbb T^d}\vartheta_k\, dx=0, \qquad \|\vartheta_k\|_{L^2}=1, \qquad
\|\vartheta_k\|_{L^\infty}\le C; \label{be-block-norms}\\
&\|\vartheta_k\|_{\dot H^{-1}}\le C\ell_k, \qquad \int_0^1 \|\nabla \Theta_k(t)\|_{L^2}^2\,dt\le C\ell_k^{-2}; \label{be-block-cost}\\
&\sup_{0<s<1}\|U_k(s)\|_{L^\infty}\le C\ell_{k-1}.
\label{be-block-velocity}
\end{align}
All time derivatives of $U_k$ vanish at $t=0,1$.  The blocks may also be chosen so that
\begin{equation}\label{be-block-lower}
\left|\left\{t\in(0,1):
\|U_k(t)\|_{L^\infty}\ge c\ell_{k-1}\right\}\right|\ge c.
\end{equation}
\end{proposition}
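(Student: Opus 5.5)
The plan is to extract the blocks directly from the smooth quasi-self-similar mixer of \cite[Sections~6 and~8]{ACM}, working first in $d=2$ and then extending trivially. Recall that ACM produce a divergence-free velocity $u$ and a solution $\theta$ of $\partial_t\theta+u\cdot\nabla\theta=0$ with a sequence of times $t_0<t_1<\cdots$, $t_k-t_{k-1}\simeq1$, such that $\theta(t_k)$ is built from finitely many smooth profile types. These types are placed on the periodic tiling of $\mathbb T^2$ by squares of side $5^{-k}$ and rescaled to that side. On $[t_{k-1},t_k]$ the field $u$ is a rescaled copy of finitely many basic cellular flows on squares of side $\ell_{k-1}$, with amplitude $\simeq\ell_{k-1}$ and $\|\nabla u\|_{L^\infty}\le C$ uniformly in $k$. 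We set $\vartheta_k:=\theta(t_k)/\|\theta(t_0)\|_{L^2}$. Transport by a divergence-free field preserves the mean, every $L^p$ norm and the distribution function. Hence $\int\vartheta_k=0$, $\|\vartheta_k\|_{L^2}=1$ and $\|\vartheta_k\|_{L^\infty}\le C$, which gives \eqref{be-block-norms}. The bound $\|\vartheta_k\|_{\dot H^{-1}}\le C\ell_k$ follows because $\vartheta_k$ has zero mean on each tile of side $\ell_k$ and is uniformly bounded there; equivalently, one can quote the exponential mixing rate of ACM.

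Next we build $U_k,\Theta_k$ by reversing time and flattening. Fix a smooth increasing bijection $\beta:[0,1]\to[0,1]$ all of whose derivatives vanish at $s=0,1$, and set $\tau_k(s):=t_k-\beta(s)(t_k-t_{k-1})$. We define
\[
\Theta_k(s):=\theta(\tau_k(s))/\|\theta(t_0)\|_{L^2},\qquad U_k(s):=\beta'(s)(t_k-t_{k-1})\,u(\tau_k(s)).
\]
The chain rule gives $\partial_s\Theta_k+U_k\cdot\nabla\Theta_k=0$, together with $\Theta_k(0)=\vartheta_k$ and $\Theta_k(1)=\vartheta_{k-1}$, which is \eqref{be-block-transport}. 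Because of the factor $\beta'$, all time derivatives of $U_k$ vanish at $s=0,1$ irrespective of the behaviour of $u$ at the junction times. The velocity bound \eqref{be-block-velocity} holds since $\beta'\le C$ and $|u|\le C\ell_{k-1}$ on the $k$-th stage. For the cost in \eqref{be-block-cost}, we differentiate the transport equation and use Gr\"onwall with $\|\nabla U_k\|_{L^\infty}\le C$. This gives $\|\nabla\Theta_k(s)\|_{L^2}\le e^{C}\|\nabla\vartheta_k\|_{L^2}$. Since $\vartheta_k$ is a finite-type smooth profile rescaled to scale $\ell_k$, we have $\|\nabla\vartheta_k\|_{L^2}\le C\ell_k^{-1}$, so $\int_0^1\|\nabla\Theta_k\|_{L^2}^2\le C\ell_k^{-2}$.

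For the lower bound \eqref{be-block-lower}, the $k$-th stage velocity is, up to finitely many types, the rescaling $\ell_{k-1}\,u_*(\cdot,x/\ell_{k-1})$ of a fixed nonzero smooth basic flow $u_*$. Hence $\|u(t)\|_{L^\infty}\ge c\,\ell_{k-1}$ on a set of $t$ of measure $\ge c$ inside $[t_{k-1},t_k]$, with $c$ independent of $k$. After composing with $\beta$, whose derivative is bounded below away from the endpoints, the same holds in $s$ with a smaller $c$. For $d\ge3$ we take all objects independent of $x_3,\dots,x_d$ and use the first two components for the velocity. This preserves divergence-freeness, the mean, and all the norms involved up to fixed constants, and we renormalize the $L^2$ norm by a fixed constant.

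The main obstacle is the uniformity in $k$. We must check that ACM's construction is genuinely quasi-self-similar with finitely many tile types, with stage times of length $\simeq1$ and stage velocities that are exact rescalings (amplitude $\ell_{k-1}$, gradient $O(1)$). We must also check that the scaling factor is compatible with $\ell_k=5^{-k}$ on the torus. If the stage durations there are not uniform, I would first reparametrize each stage linearly to unit length. This changes the velocity only by a bounded factor, so every estimate above survives with adjusted constants.
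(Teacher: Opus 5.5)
Your proposal takes the same route as the paper. The paper's entire justification is to take the smooth quasi-self-similar mixer of Alberti--Crippa--Mazzucato, reverse time, and flatten near the stage endpoints. You add the verifications the paper leaves implicit: transport invariance for the norms, tile-wise Poincar\'e for the $\dot H^{-1}$ bound, Gr\"onwall for the gradient cost, and finitely many tile types for the lower bound. Both arguments rest on the same unverified structural properties of that construction, which you correctly flag.

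One slip: your velocity has the wrong sign. Time reversal forces $U_k(s)=\tau_k'(s)\,u(\tau_k(s))=-\beta'(s)(t_k-t_{k-1})\,u(\tau_k(s))$, where $\tau_k$ is your reparametrization. The sign affects none of the estimates.
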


{\bf Compressed inverse energy cascade}

Take $\varepsilon_0>0$ and let
\begin{equation}\label{be-parameters}
\varepsilon_k := \frac{\varepsilon_0}{(k+1)^2}, \qquad \delta_k :=\varepsilon_k^2 \ell_k^2, \qquad \tau_k := \sum_{j=k+1}^\infty \delta_j, \qquad J_k:=[\tau_k,\tau_{k-1}].
\end{equation}
Define the drift 
\begin{equation}\label{be-drift}
b(t,x):= \left\{
\begin{split}
\frac{1}{\delta_k} U_k\!\left(\frac{t-\tau_k}{\delta_k},x\right),  \qquad  &t\in J_k,\\
0, \qquad & t \ge \tau_0.
\end{split}
\right.
\end{equation}
Note that
\begin{equation}\label{be-Bk}
\|b(t)\|_{L^\infty}\le
B_k:=C\delta_k^{-1}\ell_{k-1}
\lesssim\varepsilon_k^{-2}\ell_k^{-1},
\end{equation}
for $t\in J_k$. Therefore, for $1\le q<2$,
\[
\int_{J_k}\|b(t)\|_{L^\infty}^q\,dt \lesssim \varepsilon_k^{2-2q}\ell_k^{2-q}.
\]
Moreover,
\[
\delta_kB_k^2\lesssim\varepsilon_k^{-2}, \qquad \log(e+B_k) \gtrsim k,
\]
and
\[
\int_{J_k}G_A(\|b(t)\|_{L^\infty})\,dt \lesssim_A  \delta_k\varepsilon_k^{-2}e^{-cAk^{3/4}}.
\]
Hence \eqref{be-orlicz} holds. By \eqref{be-block-lower}, on a subset of
$J_k$ of measure comparable to $\delta_k$,
\[
\|b(t)\|_{L^\infty}\gtrsim\delta_k^{-1}\ell_{k-1},
\]
which implies \eqref{be-not-weak-L2}.

\begin{lemma}\label{perturbative-diffusion}
Let $\Theta$ be the solution of 
$\partial_t\Theta+U\cdot\nabla\Theta=0$ on $[0,1]$, with $\dv U=0$.
Let
\[
\widetilde b(t,x):=\delta^{-1}U\!\left(\frac{t-a}{\delta},x\right), 
\]
and let $w$ be the solution of
$\partial_tw+\widetilde b\cdot\nabla w-\Delta w=0$.  Then
\begin{equation}\label{eq:perturbative-diffusion-bound}
\|w(a+\delta)- \Theta(1)\|_{L^2}
\le \|w(a)-\Theta(0)\|_{L^2} + \delta^{1/2} \left(\int_0^1\|\nabla\Theta(t)\|_{L^2}^2\, dt \right)^{1/2}.
\end{equation}
\end{lemma}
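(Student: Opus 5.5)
We rescale time, compare $w$ with the time-rescaled transported profile, and run an $L^2$ energy estimate on the difference. Set $\widetilde\Theta(t,x):=\Theta\bigl(\frac{t-a}{\delta},x\bigr)$ for $t\in[a,a+\delta]$. Since $\partial_s\Theta+U\cdot\nabla\Theta=0$, the chain rule gives
\[
\partial_t\widetilde\Theta+\widetilde b\cdot\nabla\widetilde\Theta=0 \quad\text{on }[a,a+\delta]\times\tor,
\]
with $\widetilde\Theta(a)=\Theta(0)$ and $\widetilde\Theta(a+\delta)=\Theta(1)$. The difference $z:=w-\widetilde\Theta$ then satisfies the forced advection--diffusion equation
\[
\partial_t z+\widetilde b\cdot\nabla z-\Delta z=\Delta\widetilde\Theta .
\]
All objects are smooth on the closed interval because $U$ and $\Theta$ are smooth on $[0,1]\times\tor$. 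Hence the formal computations below are justified.

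Next we test the equation for $z$ with $z$ and integrate by parts on $\tor$. The transport term vanishes because $\dv\widetilde b=0$. The forcing term is handled in divergence form rather than by bounding $\Delta\widetilde\Theta$ directly:
\[
\frac12\frac{d}{dt}\|z\|_{L^2}^2+\|\nabla z\|_{L^2}^2=\int_{\tor}\Delta\widetilde\Theta\, z\,dx=-\int_{\tor}\nabla\widetilde\Theta\cdot\nabla z\,dx\le \|\nabla\widetilde\Theta\|_{L^2}\|\nabla z\|_{L^2}.
\]
By Young's inequality, $\|\nabla\widetilde\Theta\|\|\nabla z\|\le \|\nabla z\|^2+\frac14\|\nabla\widetilde\Theta\|^2$, so
\[
\frac{d}{dt}\|z\|_{L^2}^2\le \frac12\|\nabla\widetilde\Theta\|_{L^2}^2 .
\]
Using only $H^1$ information on $\Theta$ is what produces the $\delta^{1/2}$ factor. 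A cruder bound through $\|\Delta\Theta\|$ would lose this structure.

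Integrating over $[a,a+\delta]$ and changing variables $t=a+\delta s$ gives
\[
\int_a^{a+\delta}\|\nabla\widetilde\Theta(t)\|_{L^2}^2\,dt=\delta\int_0^1\|\nabla\Theta(s)\|_{L^2}^2\,ds,
\]
and therefore
\[
\|z(a+\delta)\|_{L^2}^2\le\|z(a)\|_{L^2}^2+\tfrac12\,\delta\int_0^1\|\nabla\Theta\|_{L^2}^2\,ds .
\]
Finally, $\sqrt{x+y}\le\sqrt x+\sqrt y$ yields \eqref{eq:perturbative-diffusion-bound}, with the slightly better constant $1/\sqrt2$ in front of the $\delta^{1/2}$ term.

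No step is a genuine obstacle. The only point requiring care is to keep the forcing in divergence form, as above.
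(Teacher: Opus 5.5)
Your proof is correct and follows the paper's argument: the paper also sets $\widetilde\Theta(t)=\Theta((t-a)/\delta)$, notes that $z=w-\widetilde\Theta$ solves $\partial_t z+\widetilde b\cdot\nabla z-\Delta z=\Delta\widetilde\Theta$, and concludes by the $L^2$ energy estimate. You have written that energy estimate out in full: the forcing is integrated by parts against $\nabla z$ and absorbed by Young's inequality, and the time rescaling produces $\delta\int_0^1\|\nabla\Theta\|_{L^2}^2\,ds$, which also yields the slightly better constant $1/\sqrt2$.
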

\begin{proof}
Consider $\widetilde\Theta(t)=\Theta((t-a)/\delta)$ and $z=w-\widetilde\Theta$, which satisfies
\[
\partial_tz+\widetilde b\cdot\nabla z-\Delta z = \Delta\widetilde\Theta.
\]
Then the energy estimate implies \eqref{eq:perturbative-diffusion-bound}.
\end{proof}

By \eqref{be-parameters}, \eqref{be-block-cost}, and
Lemma~\ref{perturbative-diffusion},
\begin{equation}\label{eq:one_block-diffusion-bound}
\|w(\tau_{k-1})-\vartheta_{k-1}\|_{L^2} \le \|w(\tau_k)-\vartheta_k\|_{L^2}+C\varepsilon_k.
\end{equation}
Since $\sqrt{\delta_k}=\varepsilon_k\ell_k\ll\ell_k$,
the diffusion acts only on a subparabolic time scale.

{\bf Passage to the limit}

\begin{proof}[Proof of Theorem~\ref{thm:bounded-energy}]
For $n \ge 1$, let $\rho^{(n)}$ be the solution of \eqref{adv} on
$[\tau_n,\infty)$ with
\[
\rho^{(n)}(\tau_n)=\vartheta_n,
\]
and extend it by zero to $(0,\tau_n)$. Iterating \eqref{eq:one_block-diffusion-bound}, we obtain
\begin{equation} \label{eq:final_energy}
\|\rho^{(n)}(\tau_0)- \vartheta_0 \|_{L^2} \le C \sum_{k=1}^n \varepsilon_k.
\end{equation}
By the maximum principle and energy equality, we get, uniformly in $n$,
\begin{equation} \label{eq:bounds_on_rho^n}
\|\rho^{(n)}\|_{L^\infty_{t,x}} \le C, \qquad \sup_{t\ge0} \|\rho^{(n)}(t)\|_{L^2} \le 1, \qquad
\int_0^\infty \|\nabla\rho^{(n)}\|_{L^2}^2\,dt \le \frac12.
\end{equation}
After passing to a subsequence,
\[
\rho^{(n)}\stackrel{*}{\rightharpoonup}\rho \quad \text{in }L^\infty_{t,x}\cap L^\infty_tL^2_x,
\qquad \rho^{(n)}\rightharpoonup\rho \quad\text{in }L^2_tH^1_x.
\]
Since $\rho^{(n)}$ is a classical solution,
\[
\int_0^\infty\!\!\int_{\tor}\rho^{(n)} \bigl(\partial_t\varphi+b\cdot\nabla\varphi+\Delta\varphi\bigr) \,dx\,dt = -\int_{\tor}\vartheta_n\varphi(\tau_n)\,dx,
\]
for every test function $\varphi$. We can pass to the limit on the left-hand side because $\partial_t\varphi + \Delta\varphi + b\cdot\nabla\varphi\in L^1_{x,t}$. Also, since $\vartheta_n$ has zero mean, by \eqref{be-block-cost},
\[
\left| \int_{\tor}\vartheta_n\varphi(\tau_n)\,dx \right| \le \|\vartheta_n\|_{\dot H^{-1}}
\|\nabla\varphi(\tau_n)\|_{L^2} \lesssim \ell_n\|\nabla\varphi\|_{L_t^\infty L_x^2} \to 0.
\]
Hence $\rho$ satisfies \eqref{weak-form-intro}, i.e., it is a distributional solution with zero initial datum.

Passing to another subsequence, $\rho^{(n)}(\tau_0)\rightharpoonup \theta_0$ in $L^2$ and weakly-$*$ in $L^\infty$. Taking $\varepsilon_0$ small enough, by \eqref{eq:final_energy} and \eqref{eq:bounds_on_rho^n}, 
\begin{equation}\label{eq:final_energy_comparison}
\|\theta_0-\vartheta_0\|_{L^2}\le\frac14,
\end{equation}
and hence $\|\theta_0\|_{L^2} \ge 3/4$. Since $b=0$ for $t\ge\tau_0$,
\[
\rho(t)=e^{(t-\tau_0)\Delta}\theta_0, \qquad t\ge\tau_0,
\] 
and consequently it is nonzero.

Since the drift is smooth for positive time, $\rho \in C^\infty((0,\infty)\times\tor)$ by parabolic regularity and the energy equality \eqref{be-energy_equality_positive} holds. Thanks to $b\rho\in L^1((0,T)\times\tor)$, we also have $\rho\in C_w([0,\infty);L^2)$. Hence, since $\rho(t)$ attains zero initial datum in the sense of distributions, we have $\rho(t) \rightharpoonup 0$ in $L^2$ as $t \to 0$.

The energy function $t\mapsto\|\rho(t)\|_{L^2}$ is nonincreasing, so the limit
$E_*$ in \eqref{injected_energy} exists.  By \eqref{eq:final_energy_comparison},
\[
E_*\ge\|\rho(\tau_0)\|_{L^2}^2 = \|\theta_0\|_{L^2}^2\ge\frac9{16}.
\]
Letting $t_0 \to 0$ in \eqref{be-energy_equality_positive}, we obtain
\eqref{be-global-energy}. Finally, since $\rho \in L^\infty((\tau,\infty)\times \tor))$ for every $\tau >0$, we have $\rho\in L^p(0,\infty;L^\infty)$ for every finite $p$ by parabolic estimates.

\end{proof}

\section{Appendix}
Here we prove uniqueness in the weak-$L^2_t$ endpoint assuming an energy inequality.

\begin{proposition}
Assume $b$ is divergence free, $b \in L^{2,\infty}(0,T;L^\infty(\mathbb T^d))$, and
\[
w\in L^\infty(0,T;L^2(\mathbb T^d)) \cap L^2_{\rm loc}((0,T);H^1(\tor)),
\]
is a distributional solution of
\[
\partial_t w+\operatorname{div}(bw)-\Delta w=0
\]
with $w|_{t=0}=0$. Assume moreover that $w$ satisfies the energy inequality
\[
\frac12\|w(t)\|_{L^2}^2 + \int_{t_0}^t\|\nabla w(\tau)\|_{L^2}^2\,d\tau \le \frac12\|w(t_0)\|_{L^2}^2,
\]
for every $t \in (t_0,T)$, almost all $t_0 \in (0,T)$. Then $w\equiv0$.
\end{proposition}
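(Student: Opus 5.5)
The plan is to show that the energy $E(t):=\|w(t)\|_{L^2}^2$ tends to $0$ as $t\to0^+$. Once that is known, the energy inequality finishes the argument: letting $t_0\to0$ along the full-measure set of admissible times gives $\|w(t)\|_{L^2}=0$ for every $t$.

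\textbf{Step 1: monotonicity and vanishing dissipation.} The energy inequality holds for almost every $t_0$ and every later $t$, and $w\in L^\infty_tL^2$. Together these imply that $E$ is nonincreasing on a full-measure set of times, so the limit $E_*:=\lim_{t\to0^+}E(t)$ exists, taken along that set. Letting $t_0\to0$ in the inequality then gives
\[
\int_0^t\|\nabla w\|_{L^2}^2\,ds\le\tfrac12\bigl(E_*-E(t)\bigr)=:\eta(t)^2\to0 .
\]
In particular $w\in L^2(0,T;H^1)$. The aim is to show $E_*=0$.

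\textbf{Step 2: frequency splitting.} Fix $N$ and the Fourier projection $\mathbf P_{\le N}$, and split $w=\mathbf P_{\le N}w+\mathbf P_{>N}w$.

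The high part is controlled by Poincaré at frequency $N$: $\|\mathbf P_{>N}w\|_{L^2}\le N^{-1}\|\nabla w\|_{L^2}$.

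For the low part, test the weak formulation with $\varphi=\chi(s)\,\mathbf P_{\le N}^2w$. This is smooth in $x$, and it is admissible after mollification in time, since $w\in L^2_tH^1$ and $bw\in L^1_tL^2$. Using $w|_{t=0}=0$, this yields
\[
\tfrac12\|\mathbf P_{\le N}w(t)\|_{L^2}^2\le\int_0^t\|b(s)\|_{L^\infty}\,\|w(s)\|_{L^2}\,\|\nabla\mathbf P_{\le N}w(s)\|_{L^2}\,ds .
\]
The integrand factor $g(s):=\|\nabla\mathbf P_{\le N}w(s)\|_{L^2}$ satisfies two bounds: $g\le NE(0^+)^{1/2}=:M$, and $\|g\|_{L^2(0,t)}\le\eta(t)$.

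\textbf{Step 3: the endpoint Lorentz estimate (main obstacle).} Since $b$ is only in $L^{2,\infty}_t$, plain Hölder against $\|g\|_{L^2}$ fails. I would instead pair $L^{2,\infty}$ with $L^{2,1}$. For a function on $(0,t)$ that is bounded by $M$ and has $L^2$ norm at most $\eta$, a decreasing-rearrangement computation gives
\[
\|g\|_{L^{2,1}(0,t)}\lesssim\eta\Bigl(1+\log\tfrac{M^2t}{\eta^2}\Bigr)^{1/2}.
\]
This is the delicate point: the logarithmic loss must be beaten by the smallness of $\eta$. It yields
\[
\|\mathbf P_{\le N}w(t)\|_{L^2}^2\lesssim\|b\|_{L^{2,\infty}L^\infty}\,E(0^+)^{1/2}\,\eta(t)\bigl(1+\log(N^2t/\eta(t)^2)\bigr)^{1/2}.
\]

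\textbf{Step 4: closing.} Average $E$ over $s\in(t/2,t)$, using monotonicity to compare $E(t)$ with its average:
\[
E(t)\le\frac2t\int_{t/2}^tE\,ds\lesssim\sup_{s\le t}\|\mathbf P_{\le N}w(s)\|^2+\frac{\eta(t)^2}{tN^2}.
\]
Choose $N^2=K/t$ with $K$ large and fixed. This gives
\[
E(t)\lesssim\eta(t)\bigl(1+\log(K/\eta(t)^2)\bigr)^{1/2}+\eta(t)^2/K .
\]
Since $\eta(t)\to0$, letting $t\to0$ forces $E_*=0$, and Step 1 then gives $w\equiv0$.

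I expect Step 3 to need the most care. One must check that the logarithmic loss is only $\log(1/\eta)$, so that it is harmless, and not a power of $N$. One must also justify the low-mode identity with the limited time regularity of $w$ near $t=0$; this uses the distributional formulation with zero data and the fact that $\mathbf P_{\le N}$ has finite rank.
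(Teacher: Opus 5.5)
Your proof is correct, but it follows a different route from the paper's. The paper argues by contradiction and never uses a low-mode energy balance. It bounds the low modes by Duhamel, $\|\mathbf P_{\le N}w(t)\|_{L^2}\le C_0KMN\sqrt t$, using only $\int_0^t\|b\|_{L^\infty}\,ds\le 2K\sqrt t$ and losing a full factor $N$ in $\mathbf P_{\le N}\operatorname{div}(bw)$. If $\|w(s)\|_{L^2}=m>0$ for some $s$, the energy inequality gives $\|w(t)\|_{L^2}\ge m$ for a.e.\ $t<s$. Taking $N(t)\approx m/(2C_0KM\sqrt t)$ then yields $\|\mathbf P_{>N(t)}w(t)\|_{L^2}\ge m/2$, hence $\|\nabla w(t)\|_{L^2}^2\gtrsim m^4/(K^2M^2t)$, which is not integrable at $0$ and contradicts finite total dissipation.

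You instead show directly that $E(t)\to0$. You pair $bw$ against $\nabla\mathbf P_{\le N}w$ rather than bounding $\mathbf P_{\le N}\operatorname{div}(bw)$ by $N\|bw\|_{L^2}$, so the flux into low modes is controlled by the vanishing dissipation $\eta(t)$. With $N^2t$ fixed, the $L^{2,\infty}$--$L^{2,1}$ duality with the cap $g\le N\sqrt{E_*}$ costs only a factor $\bigl(1+\log_+(E_*/\eta^2)\bigr)^{1/2}$. Your claimed square-root log does hold: split $\int_0^t g^*(s)s^{-1/2}\,ds$ at $s=\eta^2/(N^2E_*)$ and use Cauchy--Schwarz on the upper piece.

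The paper's route is shorter and needs neither Lorentz duality nor a justification of the low-mode energy identity. Yours requires both, but avoids the contradiction and gives an explicit modulus, $E(t)\lesssim K\sqrt{E_*}\,\eta(t)\bigl(1+\log_+(E_*/\eta(t)^2)\bigr)^{1/2}+\eta(t)^2$. Both arguments rest on the same two facts: finite dissipation down to $t=0$, and the match between the cutoff $N\sim t^{-1/2}$ and $\int_0^t\|b\|_{L^\infty}\,ds\lesssim\sqrt t$.

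One small simplification: define $\eta(t)^2:=\int_0^t\|\nabla w\|_{L^2}^2\,ds$ directly. It tends to $0$ by absolute continuity once Step 1 gives $\nabla w\in L^2(0,T;L^2)$, and this avoids evaluating $E(t)$ off the full-measure set of admissible times.
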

\begin{proof}
Let
\[ 
K:=\|b\|_{L^{2,\infty}_tL^{\infty}_x}, \qquad M:=\|w\|_{L_t^\infty L_x^2}.
\]
Choose a sequence of times $t_j \to 0^+$ starting from which the energy inequality holds. Then, for every $t<T$,
\[
\int_{t_j}^{t}\|\nabla w(s)\|_{L^2}^2\,ds \le\frac12\|w(t_j)\|_{L^2}^2
\le\frac12M^2.
\]
Letting first $j\to\infty$ and then $t \to T^-$, we obtain
\begin{equation}\label{eq:weak-L^2_global_dissipation}
\int_0^T\|\nabla w(s)\|_{L^2}^2\,ds
\le\frac12M^2.
\end{equation}

The weak-$L^2$ estimate implies
\[
\int_0^t  \|b(s)\|_{L^\infty}\,ds\le 2K\sqrt t, \qquad 0<t<T.
\]
We also have
\[
bw \in L^1(0,T;L^2(\mathbb T^d)).
\]

Let $P_{\le N}$ denote the Fourier projection onto frequencies $|\xi|\le N$.
From
\[
P_{\le N}w(t) = -\int_0^t e^{(t-s)\Delta}
P_{\le N}\operatorname{div}(bw)(s)\,ds,
\]
we obtain
\[
\|P_{\le N}w(t)\|_{L^2}  \le C_0 KMN\sqrt t.
\]

Suppose that $w\not\equiv 0$. Then $K,M>0$. Choose a time
$s\in(0,T)$, such that
\[
m:=\|w(s)\|_{L^2}>0.
\]
The energy inequality implies
\[
\|w(t)\|_{L^2}\ge m,
\qquad \quad \text{for a.a.} \quad t\in(0,s).
\]
For such sufficiently small $t$,
\[
\|P_{\le N(t)}w(t)\|_{L^2} \le \frac m2, \qquad N(t):=
\left\lfloor \frac{m}{2C_0KM\sqrt t} \right \rfloor,
\]
and consequently
\[
\|P_{>N(t)}w(t)\|_{L^2}  \ge \|w(t)\|_{L^2} -
\|P_{\le N(t)}w(t)\|_{L^2} \ge \frac{m}{2}.
\]
It follows that
\[
\begin{aligned}
\|\nabla w(t)\|_{L^2}^2
&\gtrsim N(t)^2\|P_{>N(t)}w(t)\|_{L^2}^2\\
&\gtrsim \frac{m^4}{K^2M^2}\frac1t.
\end{aligned}
\]
Therefore
\[
\int_0^{T}\|\nabla w(t)\|_{L^2}^2\,dt=\infty,
\]
contradicting \eqref{eq:weak-L^2_global_dissipation}. Hence $w\equiv0$.

\end{proof}

\bibliographystyle{alpha}
\bibliography{intro_advection-diffusion-refs}

\end{document}